\titleformat{\subsection}[runin]{\normalfont\bfseries}{\thesubsection.}{.5em}{}[.]\titlespacing{\subsection}{0pt}{2ex plus .1ex minus .2ex}{.8em}
\titleformat{\subsubsection}[runin]{\normalfont\itshape}{\thesubsubsection.}{.3em}{}[.]\titlespacing{\subsubsection}{0pt}{1ex plus .1ex minus .2ex}{.5em}
\titleformat{\paragraph}[runin]{\normalfont\itshape}{\theparagraph.}{.3em}{}[.]\titlespacing{\paragraph}{0pt}{1ex plus .1ex minus .2ex}{.5em}
\definecolor{darkred}{rgb}{0.9,0,0.3}
\definecolor{darkblue}{rgb}{0,0.3,0.9}
\newcommand{\nc}{\normalcolor}
\definecolor{vdarkred}{rgb}{0.7,0,0.2}
\definecolor{vdarkblue}{rgb}{0,0.2,0.7}
\numberwithin{equation}{section}
\numberwithin{figure}{section}
\theoremstyle{plain} %plain, definition, remark
\newtheorem{theorem}{Theorem}[section]
\newtheorem*{theorem*}{Theorem}
\newtheorem{lemma}[theorem]{Lemma}
\newtheorem*{lemma*}{Lemma}
\newtheorem*{corollary*}{Corollary}
\newtheorem{proposition}[theorem]{Proposition}
\newtheorem*{proposition*}{Proposition}
\newtheorem*{conjecture*}{Conjecture}
\theoremstyle{definition} %plain, definition, remark
\newtheorem{definition}[theorem]{Definition}
\newtheorem*{definition*}{Definition}
\newtheorem{example}[theorem]{Example}
\newtheorem*{example*}{Example}
\newtheorem{remark}[theorem]{Remark}
\newtheorem*{remark*}{Remark}
\newtheorem*{assumption*}{Assumption}
\renewcommand{\b}[1]{\boldsymbol{\mathrm{#1}}} %bold
\newcommand{\bb}{\mathbb} %blackboard bold
\renewcommand{\cal}{\mathcal}
\newcommand{\ul}[1]{\underline{#1} \!\,} %underline
\newcommand{\ol}[1]{\overline{#1} \!\,} %overline
\newcommand{\txt}[1]{\text{\rm{#1}}}
\newcommand{\E}{\mathbb{E}}
\newcommand{\R}{\mathbb{R}}
\newcommand{\C}{\mathbb{C}}
\newcommand{\N}{\mathbb{N}}
\newcommand{\e}{\mathrm{e}}
\newcommand{\ii}{\mathrm{i}}
\newcommand{\dd}{\mathrm{d}}
\newcommand*{\deq}{\mathrel{\vcenter{\baselineskip0.65ex \lineskiplimit0pt \hbox{.}\hbox{.}}}=}
\newcommand*{\eqd}{=\mathrel{\vcenter{\baselineskip0.65ex \lineskiplimit0pt \hbox{.}\hbox{.}}}}
\renewcommand{\leq}{\leqslant}
\renewcommand{\le}{\leqslant}
\renewcommand{\geq}{\geqslant}
\renewcommand{\ge}{\geqslant}
\renewcommand{\epsilon}{\varepsilon}
\newcommand{\pb}[1]{\bigl({#1}\bigr)}
\DeclareMathOperator{\tr}{Tr}
\DeclareMathOperator{\im}{Im}
\newcommand{\bC}{ {\mathbb C} }
\newcommand{\bN}{ {\mathbb N} }
\newcommand{\bE}{ {\mathbb E} }
\newcommand{\bP}{ {\mathbb P} }
\newcommand{\bR}{ {\mathbb R} }
\newcommand*{\rom}[1]{\expandafter\@slowromancap\romannumeral #1@}
\title{Bulk eigenvalue fluctuations of sparse random matrices}
\author{Yukun He\footnote{University of Z\"{u}rich, Institute of Mathematics. Email: {\tt yukun.he@math.uzh.ch}.} }
\begin{document}
\maketitle

\begin{abstract}
	We consider a class of sparse random matrices, which includes the adjacency matrix of Erd\H{o}s-R\'enyi graphs $\mathcal G(N,p)$ for $p \in [N^{\varepsilon-1},N^{-\varepsilon}]$. We identify the joint limiting distributions of the eigenvalues away from 0 and the spectral edges. Our result indicates that unlike Wigner matrices, the eigenvalues of sparse matrices satisfy central limit theorems with normalization $N\sqrt{p}$. In addition, the eigenvalues fluctuate simultaneously: the correlation of two eigenvalues of the same/different sign is asymptotically 1/-1. We also prove CLTs for the eigenvalue counting function and trace of the resolvent at mesoscopic scales.
\end{abstract}

\section{Introduction and statements of results} \label{sec:2}

Let $\cal A$ be the adjacency matrix of a sparse Erd\H{o}s-R\'enyi graph $\cal G(N,p)$. That is, $\cal A$ is a symmetric $N\times N$ matrix with independent upper triangular entries satisfying
\begin{equation*}
{\cal A}_{ij}=\begin{cases}
1 & \txt{with probability } p
\\
0 & \txt{with probability } 1-p\,.
\end{cases}
\end{equation*} 
Note that each row and column of $\cal A$ has typically $Np$ nonzero entries, and we are interested in the case when $\cal A$ is sparse; more precisely, we set $p \in [N^{-1+\epsilon},N^{-\epsilon}]$ for some fixed $\epsilon>0$. It is convenient to introduce the normalized matrix
\begin{equation} \label{1.11}
A\deq \sqrt{\frac{1}{p(1-p)N}}\,\cal A
\end{equation}
so that the typical eigenvalue spacing of $A$ is of order $N^{-1}$. We also introduce the new variable
\[
q\deq \sqrt{Np}\,.
\]
In this paper, we consider random matrices of the following class; it is an easy exercise to check that $A$ defined in \eqref{1.11} in terms of $\cal G(N, p)$ satisfies the following conditions.

\begin{definition} [Sparse matrix] \label{def:sperse} Fix $\beta \in (0,1/2)$ and set $q \deq N^{\beta}$. A sparse matrix is a real symmetric $N\times N$ matrix $H=H^* \in \bb R^{N \times N}$ whose entries $H_{ij}$ satisfy the following conditions.
	\begin{enumerate}
		\item[(i)] The upper-triangular entries ($H_{ij}: 1 \leq i \leq j\leq N$) are independent.
		\item[(ii)] The off-diagonal entries ($H_{ij}:  i \ne j$) are identically distributed.
		\item[(iii)] We have $\bb E H_{ij}=0$ and $ \bb E H_{ij}^2=(1+O(\delta_{ij}))/N$ for all $i,j$.
		\item[(iv)] For any $k\geq 3$, we have
		$\bb E|H_{ij}|^k \leq C_k/(Nq^{k-2})$ for all $i,j$.
	\end{enumerate}
	We define the adjacency matrix $A$ by
	$$
	A = H + f \b e \b e^*\,,
	$$
	where $\b e \deq N^{-1/2}(1,1,\dots,1)^*$, and $f \geq 0$.
\end{definition}
A special case of the above model is the Wigner matrix. Recall that Wigner matrix is an $N\times N$ real symmetric matrix $W$ satisfying the assumptions (i) -- (iii) in Definition \ref{def:sperse}, and $\|W_{ij}\|_k\asymp\|W_{ij}\|_2$ for all $k\geq 3$. $W$ is the Gaussian Orthogonal Ensemble (GOE) if we further assume that $W_{ij}$ have Gaussian distributions. 

The celebrated Wigner-Dyson-Mehta (WDM) universality conjecture asserts that the local spectral properties of a random matrix do not depend on the explicit distribution of the matrix entries, and they are only determined by the symmetry class of the matrix. During the past decade, the universality conjecture for Wigner matrices has been established in a series of papers \cite{EPRSY,ESY4,ESY1,ESY3,TV1,TV2} in great generality. In particular, it has been shown that for a symmetric Wigner matrix, the averaged $n$-point correlation functions and distribution of a single eigenvalue gap coincide with those of the GOE.

The study of universality for sparse matrices was initiated in \cite{EKYY1, EKYY2}, where the authors proved local semicircle law on optimal scales, and established bulk universality for $q \geq N^{1/3}$. Later in \cite{HLY15}, the result was extended to all $q \geq N^{\epsilon}$. In particular, it was proved that for the eigenvalues $\lambda_1 \leq \dots \leq \lambda_N$ of $A$ and the eigenvalues $\mu_1\leq \dots \leq \mu_N$ of GOE, one has
\begin{equation} \label{gap}
\lim_{N\to \infty}\bb E [f(N\varrho(\gamma_i)(\lambda_i-\lambda_{i+1}))-f(N\varrho(\gamma_i)(\mu_i-\mu_{i+1}))] =0
\end{equation}
whenever $i \in [\epsilon N,(1-\epsilon)N]$. Here $f \in C^{\infty}_c(\bb R)$, $\varrho$ and $\gamma_i$ are the semicircle density and its $i$th $N$-quantile $\gamma_i$ respectively, i.e.
\[
\varrho(x)\deq \frac{1}{2\pi}\sqrt{(4-x^2)_{+}} \quad \mbox{and} \quad N\int_{-2}^{\gamma_i}\varrho(x) \dd x =i-1/2\,.
\]

Unlike the averaged n-point correlation functions and single eigenvalue gaps, the fluctuations of single eigenvalues are understood much later. The single eigenvalue fluctuation was first considered in \cite{G2005} for Gaussian Unitary Ensembles (GUE), where the author proved that 
\begin{equation} \label{1.22}
\frac{\mu_i-\gamma_i}{\sqrt{\frac{2\log N}{(4-\gamma_i^2)N^2}}} \overset{d}{\longrightarrow} \cal N(0,1)
\end{equation}
as $N\to \infty$, for all bulk eigenvalues $\mu_i$ of GUE.  In \cite{O2010}, the result was extended to GOE and a special class of Wigner matrices. Recently in \cite{BM18,LS18}, it was showed that \eqref{1.22} remains valid for all Wigner matrices.

In this paper, we study the single eigenvalue fluctuation of the sparse matrices. For the remaining of this paper we replace the assumption (iv) in Definition \ref{def:sperse} by
\begin{enumerate}
	\item[(iv)$'$] For any $k \geq 3$, we have $\bb E|H_{ij}|^k \asymp 1/(Nq^{k-2})$ for all $i,j$,
\end{enumerate}
so that $H$ and $A$ are strictly sparse.  Let us denote
\begin{equation} \label{zeta}
\zeta\deq \min\Big\{\frac{1}{2}-\beta,\beta\Big\}>0\,.
\end{equation}
We may now state our main result.

\begin{theorem}[Main result] \label{mainthm} Fix $\tau>0$. Let $\lambda_1 \leq \lambda_2 \leq \cdots \leq \lambda_N$ be the eigenvalues of $A$.  Set
	\[
	X_i\deq \frac{\lambda_i-\bb E \lambda_i}{\gamma_i \sqrt{ \frac{1}{2}\bb E H^4_{12}}}
	\] 
	for all $i \in \{1,2,...,N\}$. We denote the index set $\cal I\deq ([\tau N, N/2-N^{1-\zeta/17}] \cup [N/2+N^{1-\zeta/17},(1-\tau N)]) \cap \bb N$. Then for any fixed $k$ and $i_1,...,i_k \in \cal I$,
\begin{equation} \label{result1}
(X_{i_1},...,X_{i_k}) \overset{d}{\longrightarrow} \cal N_k({\bf 0}, \cal J)\,,
\end{equation}
where $\cal J\in \bR^{k\times k}$ is the matrix of ones, i.e. $\cal J_{ij}= 1$ for all $i,j\in\{1,2,...,k\}$. 

For $\lim_{N\to \infty} i/N=1/2$, we have
\[
\frac{\lambda_i-\bb E \lambda_i}{\sqrt{\bb E H_{12}^4}} \overset{d}{\longrightarrow} 0\,.
\]	
\end{theorem}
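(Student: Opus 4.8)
\emph{Proof idea for the $i/N\to1/2$ statement.} The plan is to read it off from the quantitative bulk estimates that the proof of the first statement already produces, rather than from \eqref{result1} itself, since a sequence $i=i_N$ with $i_N/N\to1/2$ may remain inside the excluded window $\abs{i-N/2}\le N^{1-\zeta/17}$ for every $N$. The picture behind Theorem \ref{mainthm} is that, uniformly for $i$ in the bulk $\q{\tau N,(1-\tau)N}$, the centred eigenvalue has a representation
\[
\lambda_i-\bb E\lambda_i\;=\;\gamma_i\,\cal W\;+\;\cal E_i\,,
\]
where $\cal W$ is a \emph{single} mean-zero, asymptotically Gaussian random variable that does not depend on $i$, with $\bb E\cal W^2=(\tfrac12+o(1))\bb E H_{12}^4$ — morally the centred linear statistic $\tfrac1{2N}\pa{\tr H^2-\bb E\tr H^2}$, the ``dilation mode'' of the entries — and $\cal E_i$ is a remainder. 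The scalar weight $\gamma_i$ is what simultaneously forces the normalisation by $\gamma_i$ in $X_i$ and the all-ones covariance $\cal J$ (eigenvalues of equal sign are multiplied by $\cal W$ with matching sign, those of opposite sign with opposite sign), and it is also what makes the fluctuation degenerate at the spectral centre, where $\gamma_i\to0$.

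Concretely, the key input I would establish is the variance bound
\[
\var(\lambda_i)\;\lesssim\;\gamma_i^2\,\bb E H_{12}^4\;+\;N^{-2+\epsilon}\qquad\text{for any fixed }\epsilon>0,\ \text{uniformly for }i\in\q{\tau N,(1-\tau)N},
\]
in which the first term is the sparse contribution carried by $\cal W$ and the second absorbs the Wigner-type ``floor'' coming from the logarithmically divergent variance of the eigenvalue counting function $\cal N(E)\deq\#\h{j:\lambda_j\le E}$. I would obtain this from three ingredients: (i) the sparse local semicircle law and rigidity, to localise $\lambda_i$ near $\bb E\lambda_i$ and to linearise the identity $\h{\lambda_i\le E}=\h{\cal N(E)\ge i}$; (ii) the mesoscopic central limit theorem for $\tr(H-z)^{-1}$ at scales $\eta=N^{-1+a}$ (one of the results the paper also proves), which by a Helffer--Sj\"ostrand argument yields $\var(\cal N(E))\lesssim\log N+N^2E^2\,\bb E H_{12}^4$ for bulk energies $E$ — the decisive point being that the sparse part of the fluctuation of $\cal N(E)$ enters with the weight $E$ and hence disappears at $E=0$; and (iii) the Gustavsson-type inversion of such counting-function bounds into bounds on individual $\lambda_i$, which goes through near the centre precisely because $\varrho$ does not vanish there, $\varrho(0)=1/\pi$.

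Granting the variance bound, the conclusion is immediate. If $i/N\to1/2$ then $\gamma_i\to0$, and since $\bb E H_{12}^4\asymp(Nq^2)^{-1}=N^{-1-2\beta}$ we get
\[
\var\!\pa{\frac{\lambda_i-\bb E\lambda_i}{\sqrt{\bb E H_{12}^4}}}\;\lesssim\;\gamma_i^2+\frac{N^{-2+\epsilon}}{\bb E H_{12}^4}\;\asymp\;\gamma_i^2+N^{2\beta-1+\epsilon}\;\longrightarrow\;0
\]
upon choosing $\epsilon<1-2\beta$, which is possible since $\beta<1/2$; Chebyshev's inequality then gives convergence to $0$ in probability, hence in distribution. (For those $i_N$ that do lie in $\cal I$ one may instead read the statement off \eqref{result1} directly, since $X_{i_N}\todist\cal N(0,1)$ is tight while $\gamma_{i_N}\to0$, so $(\lambda_{i_N}-\bb E\lambda_{i_N})/\sqrt{\bb E H_{12}^4}=\tfrac1{\sqrt2}\,\gamma_{i_N}X_{i_N}\to0$.) The main obstacle is ingredient (ii): one must show that not merely the leading dilation mode but \emph{all} of the sparse corrections to the fluctuation of $\cal N(E)$ are suppressed — through extra factors of $q^{-1}$ or of $E$ — uniformly down to $E=0$, so that $\var(\cal N(0))$ is genuinely of order $\log N$ rather than of the much larger order $N^2\bb E H_{12}^4$. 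This is exactly the delicate part of the expansion that the proof of the first statement must already carry out; pushing it to cover $i$ in the excluded window — where the signal $\gamma_i\cal W$ no longer dominates $\cal E_i$, which is why \eqref{result1} cannot hold there, but still satisfies $\cal E_i=o\pa{\sqrt{\bb E H_{12}^4}}$ in absolute value — only requires bookkeeping of the powers of $N$.
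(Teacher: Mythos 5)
Your mechanism is essentially the paper's. The paper's proof of Theorem \ref{mainthm} (Section 3.2) establishes the uniform representation
\[
\lambda_i-\bb E\lambda_i=\tfrac{\gamma_i}{2}\,(\ul{H^2}-1)+O_{\prec}\big(N^{-1/2-\beta-\zeta/16}\big)
\qquad\text{for all } i\in[\tau N,(1-\tau)N]\,,
\]
so your single ``dilation mode'' $\cal W$ is realised exactly as $\tfrac12(\ul{H^2}-1)$, with $\bb E(\ul{H^2}-1)^2=(2+o(1))\bb E H_{12}^4$ as you predict, and the representation is obtained precisely through the counting function, Helffer--Sj\"ostrand, and the key observation that the sparse fluctuation of $\Sigma(E)$ carries the weight $E\sqrt{4-E^2}$ and hence degenerates at $E=0$ (Proposition \ref{prop3.2}). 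Where you diverge --- and make the problem substantially harder than necessary --- is in ingredients (ii)--(iii): you ask for $\var(\Sigma(0))\lesssim\log N$ and an eigenvalue-variance floor of order $N^{-2+\epsilon}$, and you correctly flag this Wigner-level sharpening as the delicate point. The paper never proves anything that strong: its counting-function comparison has remainder only $O_{\prec}(N^{1/2-\beta-\zeta/16})$, vastly larger than $\sqrt{\log N}$, but still $o\big(N\sqrt{\bb E H_{12}^4}\,\big)=o(N^{1/2-\beta})$, which is all the degenerate statement requires. Indeed, dividing the displayed representation by $\sqrt{\bb E H_{12}^4}\asymp N^{-1/2-\beta}$ gives $\gamma_i\cdot O_{\prec}(1)+O_{\prec}(N^{-\zeta/16})\to 0$ in probability whenever $\gamma_i\to0$, covering the excluded window with no Chebyshev step, no variance computation, and no Gustavsson-type inversion at the sharp scale. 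So your plan is sound, but you should replace the hard variance bound by the soft stochastic-domination bound on $\cal E_i$ that already falls out of the Green-function moment estimates used for \eqref{result1}; the ``delicate part'' you identify can be skipped entirely.
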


By assumption (iv)$'$, we have $\gamma_i \sqrt{ \frac{1}{2}\bb E H^4_{12}} \asymp |\gamma_i|N^{-1/2-\beta}$. Thus \eqref{result1} implies that, when $\gamma_i$ is away from 0, the corresponding $\lambda_i$ fluctuates on a much larger scale than the bulk eigenvalues of a Wigner matrix. Since the limiting covariance matrix $\cal J$ is the matrix of ones, we see that all the eigenvalues fluctuate simultaneously. Note the phenomenon of the co-existence of Theorem \ref{mainthm} and the gap universality \eqref{gap}: although the eigenvalues of $A$ fluctuate on large scales, the fluctuations of consecutive eigenvalues are almost identical, and hence the fluctuations make little impact on the gap distribution.

We also remark on the fluctuation near the edge. For $q \gg N^{1/6}$, the extreme eigenvalues of $A$ are known to exhibit Tracy-Widom fluctuations \cite{LS1,EKYY2}.  When $N^{1/9} \ll q \ll N^{1/6}$, it was proved in \cite{HLY} that 
\begin{equation} \label{1.6}
\frac{\lambda_{N-1}-(2+1/q^2-5/(4q^4))}{\sqrt{ 2\bb E H^4_{12}}} \overset{d}{\longrightarrow} \cal N(0,1)\,.
\end{equation}
Note that
\[
X_{N-1}=\frac{\lambda_{N-1}-\bb E \lambda_{N-1}}{\sqrt{ 2\bb E H^4_{12}}}(1+O(N^{-2/3}))\,,
\]
thus for $N^{1/9} \ll q \ll N^{1/6}$, the bulk fluctuation \eqref{result1} exhibits exactly the same behavior as the edge fluctuation \eqref{1.6}. In fact, in both cases the fluctuations come from the sparsity of $A$.  We believe that the source of the edge fluctuation remains the same for small $q$, and Theorem \ref{mainthm} can be extended to the edge for all $N^{\epsilon}\leq q \ll N^{1/6}$.

We also have the following Central Limit Theorem for the eigenvalue counting function of $A$.
\begin{theorem} \label{cor1.4}
Fix $\tau>0$.	Let $ \Sigma(E)\deq |\{i:\lambda_i \leq E\}|$ denote the eigenvalue counting function of $A$. For $E \in [-2+\tau,-N^{-\zeta/17}]\cup[N^{-\zeta/17},2-\tau]$, we have
	\[
	\frac{\Sigma(E)- \bb E \Sigma(E) }{\sigma(E)} \overset{d}{\longrightarrow} \cal N\big(0, 1\big)\,,
	\]
	where
\[
\sigma(E) \deq E\sqrt{4-E^2}\bigg(\frac{\bb E H_{12}^4}{8\pi^2}\bigg)^{1/2} N\,.
\]
For $\lim_{N \to \infty} E=0$, we have
\[
\frac{\sqrt{N}}{q}\big(\Sigma(E)- \bb E \Sigma(E) \big) \overset{d}{\longrightarrow} 0\,.
\]
\end{theorem}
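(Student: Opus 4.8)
The plan is to reduce the statement, via the Helffer--Sjöstrand formula together with eigenvalue rigidity, to the Central Limit Theorem for the trace of the resolvent $G(z)\deq(A-z)^{-1}$ at mesoscopic scales (the companion statement mentioned in the abstract), and then to evaluate the limiting variance explicitly. Since $A=H+f\b e\b e^*$ is a rank-one perturbation of $H$, the eigenvalues interlace and $\abs{\Sigma_A(E)-\Sigma_H(E)}\leq 1=o(N^{1/2-\beta})$, so I would work with $H$ throughout. Fix a small $\epsilon>0$, set $\eta\deq N^{-1/2-\beta-\epsilon}$ (still well above $N^{-1}$ since $\beta<\tfrac12$) and $\delta\deq\tau/3$; writing $\theta_\eta(x)\deq\tfrac1\pi\tfrac\eta{x^2+\eta^2}$ for the Poisson kernel, one has the exact identity $\sum_i(\mathbbm 1_{(-\infty,E]}*\theta_\eta)(\lambda_i)=\tfrac1\pi\int_{-\infty}^E\im\tr G(x+\ii\eta)\,\dd x$. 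Eigenvalue rigidity for sparse matrices (valid for $q\geq N^\epsilon$, \cite{EKYY1,HLY15}) gives, with high probability, that $H$ has no eigenvalue below $-2-\delta$ and $\abs{\{i:\abs{\lambda_i-E}\leq t\}}\lesssim Nt+N^\epsilon$ for $t\geq\eta$, so the Poisson smoothing and the truncation of the $x$-integral at $-2-\delta$ together cost $O_{\mathbb P}(N\eta\log N+N^\epsilon)=o(N^{1/2-\beta})$, and the same bound controls the analogous discrepancy for the expectations. Hence
\[
\Sigma(E)-\bb E\Sigma(E)=\frac1\pi\int_{-2-\delta}^{E}\bigl(\im\tr G(x+\ii\eta)-\bb E\im\tr G(x+\ii\eta)\bigr)\,\dd x+o_{\mathbb P}(N^{1/2-\beta})\,.
\]

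The decisive input is the structure of the mesoscopic fluctuation of $\tr G$, which a cumulant/resolvent expansion should yield in the form: uniformly for $z$ in the relevant range,
\[
\tr G(z)-\bb E\tr G(z)=-\,\mathcal S\,\bigl(m(z)+zm'(z)\bigr)+\mathcal E(z)\,,\qquad \mathcal S\deq\sum_{i<j}\bigl(H_{ij}^2-\bb E H_{ij}^2\bigr)\,,
\]
where $m$ is the Stieltjes transform of $\varrho$ and $\mathcal E$ is a remainder --- gathering the Wigner-type fluctuation and the higher-order sparse corrections --- with $\int_{-2-\delta}^E\im\mathcal E(x+\ii\eta)\,\dd x=o_{\mathbb P}(N^{1/2-\beta})$. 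The conceptual point is that the dominant fluctuation is one-dimensional: it points along the direction in which $\tr H^2$ varies, i.e. an infinitesimal common rescaling of all eigenvalues, which is exactly the mechanism producing the all-ones limiting covariance $\cal J$ of Theorem \ref{mainthm}. Inserting this into the previous display and using $m(x+\ii\eta)+(x+\ii\eta)m'(x+\ii\eta)=\partial_x\bigl[(x+\ii\eta)m(x+\ii\eta)\bigr]$, the $x$-integral telescopes; letting $\eta\to 0$ (at cost $O(\eta)$) and using that $m(-2-\delta)\in\bb R$ while $\im m(E+\ii0)=\pi\varrho(E)=\tfrac12\sqrt{4-E^2}$, this gives
\[
\Sigma(E)-\bb E\Sigma(E)=-\,\frac{E\sqrt{4-E^2}}{2\pi}\,\mathcal S+o_{\mathbb P}(N^{1/2-\beta})\,.
\]

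Finally one analyses $\mathcal S$: its summands are independent and mean zero, with $\bb E H_{ij}^2=1/N$ and $\bb E|H_{ij}|^k\asymp 1/(Nq^{k-2})$, so $\var\mathcal S=\binom N2\bigl(\bb E H_{12}^4-N^{-2}\bigr)=(\tfrac12+o(1))N^2\bb E H_{12}^4$ and the Lyapunov ratio $\sum_{i<j}\bb E\,\absb{H_{ij}^2-\bb E H_{ij}^2}^3/(\var\mathcal S)^{3/2}\asymp N^{-1/2-\beta}\to 0$; hence $\mathcal S/(\var\mathcal S)^{1/2}\todist\cal N(0,1)$, and $\var\bigl(\tfrac{E\sqrt{4-E^2}}{2\pi}\mathcal S\bigr)=\tfrac{E^2(4-E^2)}{8\pi^2}N^2\bb E H_{12}^4\,(1+o(1))=\sigma(E)^2\,(1+o(1))$. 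For $E$ in the stated range one has $\sigma(E)\asymp N^{1/2-\beta}$ (the cut-off $\abs{E}\geq N^{-\zeta/17}$ being what makes the error terms negligible against $\sigma(E)$), so dividing the last display by $\sigma(E)$ yields the first assertion; for $E\to 0$ the prefactor $\tfrac{E\sqrt{4-E^2}}{2\pi}$ tends to $0$, the right-hand side is $o_{\mathbb P}(N^{1/2-\beta})$, and the second assertion follows.

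I expect the main obstacle to be the mesoscopic decomposition of $\tr G$ with its explicit one-dimensional leading term and, above all, a remainder $\mathcal E(z)$ controlled uniformly enough in $z$ --- including $\re z$ approaching $\pm2\mp\tau$ and $\pm N^{-\zeta/17}$, and $\abs{\im z}$ as small as $N^{-1/2-\beta-\epsilon}$ --- that its integral is $o_{\mathbb P}(N^{1/2-\beta})$, the admissible range $\abs{E}\geq N^{-\zeta/17}$ being dictated by the error rate in that control; this is where essentially all of the real work sits, the remaining ingredients (interlacing, rigidity, Helffer--Sjöstrand, the telescoping integral, and the elementary CLT for $\mathcal S$) being standard. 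A secondary but error-prone point is pinning down the numerical constant in $\sigma(E)$ --- in particular the factor $\tfrac12$ inside $\bb E H_{12}^4$ and the $\tfrac1{8\pi^2}$ --- which I would cross-check against the single-eigenvalue picture $\lambda_i-\bb E\lambda_i\approx\gamma_i(\tfrac12\bb E H_{12}^4)^{1/2}\,\mathcal S/(\var\mathcal S)^{1/2}$ behind Theorem \ref{mainthm}, via $\Sigma(E)-\bb E\Sigma(E)\approx N\varrho(E)\bigl(E/(1+(\tfrac12\bb E H_{12}^4)^{1/2}\mathcal S/(\var\mathcal S)^{1/2})-E\bigr)$.
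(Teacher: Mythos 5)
Your outline follows the same route as the paper: interlacing to pass from $A$ to $H$, a smoothed counting function expressed through $\int\im\tr G$, a decomposition of $\tr G-\bb E\tr G$ into a one\nobreakdash-dimensional leading fluctuation proportional to $\mathcal S=\sum_{i<j}(H_{ij}^2-\bb EH_{ij}^2)$ plus a remainder, and an elementary CLT for $\mathcal S$. Your leading term is the right one: by $m+zm'+2mm'=0$ one has $-\mathcal S(m+zm')=2\mathcal S\,mm'\approx N(\ul{H^2}-1)mm'$, which is exactly the random shift built into $[G]$ in Proposition \ref{propgreen}, and your constant reproduces $\sigma(E)$. (Your sign for the coefficient of $\mathcal S$ is opposite to that in Proposition \ref{prop3.2}; since the limit is a centered Gaussian this is immaterial. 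Also, your argument for the second assertion, like the paper's, shows $\Sigma(E)-\bb E\Sigma(E)=o_{\bb P}(\sqrt N/q)$, i.e.\ it proves the claim with the normalization $q/\sqrt N$ rather than the $\sqrt N/q$ printed in the statement.) There remain two gaps, one you flag and one you do not.

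The flagged one: the existence of the decomposition with a controlled remainder is Proposition \ref{propgreen}, and it is the technical core of the whole paper (Sections \ref{section4}--\ref{sec5}). A single cumulant expansion only yields $\langle\ul G\rangle-(\ul{H^2}-1)mm'\prec\frac{1}{N\eta}+\frac{1}{q^2}$, and $1/q^2=N^{-2\beta}\gg N^{-1/2-\beta}$ for every $\beta<1/2$; reaching the optimal $1/(\sqrt Nq)$ requires the recursive hierarchy of Schwinger--Dyson equations for the polynomial classes $\cal U,\cal V,\cal W$, each iteration gaining only a factor $N^{-\beta}$. So ``a cumulant/resolvent expansion should yield'' this is not a routine step but the bulk of the work.

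The unflagged gap would break your reduction as written. You fix a single scale $\eta=N^{-1/2-\beta-\epsilon}$ and require $\int_{-2-\delta}^{E}\im\mathcal E(x+\ii\eta)\,\dd x=o(N^{1/2-\beta})$. But at that scale the remainder is genuinely of pointwise size $1/\eta$: indeed $\mathcal E\approx N[G]$ and, by Proposition \ref{propgreen} and \eqref{222}, $\eta(\tr G-\bb E\tr G)$ has a nondegenerate Gaussian limit for $\eta\ll q/\sqrt N$, so $|\mathcal E(x+\ii\eta)|\asymp 1/\eta=N^{1/2+\beta+\epsilon}\gg N^{1/2-\beta}$. Hence the integrated bound cannot follow from any pointwise control of $\mathcal E$; it requires cancellation in $x$. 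Nor can you fix this by choosing a different $\eta$: the Poisson-smoothing error forces $N\eta\ll N^{1/2-\beta}$ while a pointwise estimate of the remainder forces $1/\eta\ll N^{1/2-\beta}$, and the two conditions $\eta\ll N^{-1/2-\beta}$ and $\eta\gg N^{-1/2+\beta}$ are incompatible for any $\beta>0$. The paper avoids this by running Helffer--Sj\"ostrand over all scales $0<y\leq 2$ and integrating the $f''$-term by parts, first in $x$ and then in $y$ (the computation \eqref{310}--\eqref{312}), exploiting that $[G(x+\ii y)]\prec\frac{1}{Ny}+\frac{N^{-\zeta/16}}{\sqrt N q}$ improves as $y$ grows. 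You would need either to do the same, or to prove an integrated-in-$x$ version of the remainder estimate directly; as stated, this step of your plan does not close.
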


 Let $F$ be a smooth test function independent of $N$. Recall that for a Wigner matrix $W$, the macroscopic linear statistic $\tr F(W)$ fluctuates on the scale 1 (see \cite{LP}), while \cite{LS18} shows $\Sigma_W(E)$ fluctuates on the scale $\sqrt{\log N} \gg 1$. This is due to the fact that as the derivative of the test function becomes more singular, the leading contribution of the fluctuation will start to come from the fluctuations of individual eigenvalues, which are much larger than the averaging fluctuation from linear statistics.

 Our observation is that for a sparse matrix, $\Sigma(E)$ should fluctuate on the same scale as $\tr F(H)$. From \cite{ST12} we know that $\tr F(H)$ fluctuates on the scale $\sqrt{N}/q\gg \sqrt{\log N}$. The source of this is the fourth moment assumption $\bb E (\sqrt{N}H_{12})^4\asymp N/q^2 \gg 1$, which gives rise to large, but simultaneous fluctuations for all eigenvalues. When we switch from continuous to a jump test function, the result remains the same, as the source of the fluctuation is unchanged.

To study $\lambda_i$ and $\Sigma(E)$, the main step is obtaining good estimates for linear statistics of Green functions at small scales. Let us define the spectral domain
\[
{\bf D}_{\tau} \deq \{E+\ii \eta: |4-E^2|+\eta\geq \tau,|E|\leq 4, N^{-1+\tau}\leq \eta \leq 4\}\,.
\]
We denote the resolvent of $H$ by $G(z)\deq (H-z)^{-1}$, where $\im z \ne 0$.
The key step of our proof is a result on centered moments of mesoscopic linear statistics of the Green functions (see Proposition \ref{propgreen} below), which in particular implies the optimal estimate
\[
\frac{1}{N}\tr G(z) -\frac{1}{N}\bb E \tr G(z) \prec \frac{1}{N\eta}+\frac{1}{\sqrt{N}q}
\]
for all $z=E+\ii \eta \in {\bf D}_{\tau}$. Here $``\prec"$ is the notion of stochastic domination given in Definition \ref{def:2.3} below. 

By computing the high moments of $N^{-1}\tr G -N^{-1}\bb E \tr G$ using cumulant expansion/Schur complement formula, it can be proven, as previously in \cite{EKYY1} that 
\begin{equation*}
\frac{1}{N}\tr G(z) -\frac{1}{N}\bb E \tr G(z) \prec \frac{1}{N\eta}+\frac{1}{q^2}\,.
\end{equation*}
In order to improve the second term $1/q^2$ to the optimal scale $1/(\sqrt{N}q)$, we need more expansions. However, each additional expansion, in the worst case, only results in an improvement of factor $1/q^2$. When $q=N^{\epsilon}$, it is impossible to write down each expansion explicitly, and one has to introduce general formulas that allows recursive expansions. In order to do so, we implement the ideas in  \cite{HK2}, to construct a hierarchy of Schwinger-Dyson equations for a sufficiently large class of polynomials in the entries of the Green function. As \cite{HK2} deals with the covariance of two Green functions of Wigner matrices, we also need to adapt the method to our current setting, which deals with high-moment estimates of Green functions of sparse matrices. See Section \ref{sec4.2} for more details.

We also apply Proposition \ref{propgreen} to prove the following CLT for mesoscopic linear statistics of Green functions.
\begin{theorem} \label{thmlinstat}
	 Let $z =E+\ii \eta \in {\bf D}_\tau$.
	\begin{enumerate}
		\item When $\eta \gg q/\sqrt{N}$,
		\begin{equation} \label{111}
		\frac{1}{m(z)m'(z)\sqrt{2\bb EH^4_{12}}
			\,N}\big(\tr G(z) -\bb E \tr G(z)\big) \overset{d}{\longrightarrow}\cal N(0,1)\,,
		\end{equation}
		where $m$ is the Stieltjes transform  of the Wigner semicircle law.
		\item When $\eta \ll q/\sqrt{N}$,
		\begin{equation} \label{222}
	  \sqrt{2}\eta\big(\tr G(z) -\bb E \tr G(z)\big) \overset{d}{\longrightarrow}\cal N_{\bb C}(0,1)\,.
		\end{equation}
		Here $\cal N_{\bb C}(0,1)$ denotes the distribution of the standard complex Gaussian random variable.
	\end{enumerate}
\end{theorem}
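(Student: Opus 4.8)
The plan is to prove both convergences by the method of moments, treating $\tr G(z)$ as a complex random variable and working simultaneously with $z$ and its conjugate $\bar z$. Set $Z(w)\deq \tr G(w)-\bb E\tr G(w)$. The engine is Proposition \ref{propgreen}: for every fixed $n$ it expands the mixed centred moment $\bb E\prod_{\ell=1}^{n}Z(w_\ell)$, with each $w_\ell\in\{z,\bar z\}$, as the Wick sum over pairings of products of two-point functions $\bb E[Z(w_i)Z(w_j)]$, up to an error that is a positive power of $N$ smaller than the natural product of scales. Thus the family $\{Z(w)\}$ is asymptotically Gaussian, and it remains to (a) pin down the leading order of $\bb E[Z(z)Z(z)]$, $\bb E[Z(z)Z(\bar z)]$ and $\bb E[Z(\bar z)Z(\bar z)]$ in each of the two regimes, and (b) read off the moments of the normalised quantity.

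In the regime $\eta\gg q/\sqrt N$, the two-point function is governed by the term carrying the fourth cumulant $\bb E H_{12}^4$, which — exactly as for macroscopic linear statistics of sparse matrices — has the rank-one form $\bb E[Z(w_1)Z(w_2)]=2\,m(w_1)m'(w_1)\,m(w_2)m'(w_2)\,N^{2}\bb E H_{12}^4\,(1+o(1))$, the constant $2$ falling out of an explicit computation with the semicircle transform $m$; the Gaussian (GOE) part of the covariance is $O(1+\eta^{-2})$ and all higher-cumulant terms are of still lower order, so the total of these is $o(N^{2}\bb E H_{12}^4)$ because $\eta\gg q/\sqrt N$ and $\bb E H_{12}^4\asymp(Nq^{2})^{-1}$ give $1+\eta^{-2}\ll N/q^{2}\asymp N^{2}\bb E H_{12}^4$. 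Since $m(\bar w)=\overline{m(w)}$ and $m(z)m'(z)\ne0$ on ${\bf D}_\tau$, dividing $Z(w)$ by $m(w)m'(w)N\sqrt{2\bb E H_{12}^4}$ turns each of the three two-point functions into $1+o(1)$; writing $V$ for the so-normalised $Z(z)$, the Wick sum yields $\bb E[V^{a}\bar V^{b}]\to(a+b-1)!!$ when $a+b$ is even and $\to0$ otherwise, which are the moments of a standard \emph{real} Gaussian (in particular $\im V\to0$ in $L^{2}$). The rank-one structure of the covariance is precisely what collapses the limit onto the real line, and is the analytic face of the simultaneous fluctuation of the eigenvalues.

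In the regime $\eta\ll q/\sqrt N$ the fourth-cumulant term is $O(N/q^{2})=o(\eta^{-2})$ and the Gaussian part dominates. Keeping the GOE contribution in Proposition \ref{propgreen} produces the classical mesoscopic two-point function of $\tr G$, whose singular part localises on the diagonal $x=y=E$, where the weight $(4-xy)/\sqrt{(4-x^{2})(4-y^{2})}$ equals $1$; hence $\bb E[Z(z)Z(\bar z)]=\tfrac1{2\eta^{2}}(1+o(1))$ with an $E$-independent constant, while $\bb E[Z(z)Z(z)]$ and $\bb E[Z(\bar z)Z(\bar z)]$ stay $O(1)=o(\eta^{-2})$. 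With $W\deq\sqrt2\,\eta\,Z(z)$, only the pairings matching every factor $W$ with a factor $\bar W$ survive in the Wick expansion of $\bb E[W^{a}\bar W^{b}]$; such pairings exist only if $a=b$ and then number $a!$, so $\bb E[W^{a}\bar W^{b}]\to\delta_{ab}\,a!$, the moments of $\cal N_{\bb C}(0,1)$.

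Granting Proposition \ref{propgreen}, the residual work — and the only genuinely delicate point — is the bookkeeping of step (a): extracting the exact constants (the $2$ in the fourth-cumulant term, the $\tfrac1{2\eta^{2}}$ in the GOE term, and the fact that the off-regime pieces are lower order) and, above all, verifying that the error in the quantitative Wick expansion of Proposition \ref{propgreen} is $o(1)$ relative to the $n$-th power of the regime-dependent leading scale \emph{uniformly} over $z\in{\bf D}_\tau$ — the delicate range being $\eta$ close to the threshold $q/\sqrt N$, where the Gaussian and fourth-cumulant contributions to the variance become comparable and one must know that Proposition \ref{propgreen} still controls the remainder by a genuine power of $N$. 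Once that uniformity is in hand, both CLTs follow from the moment-convergence theorem, the limiting real and complex Gaussians being moment-determinate.
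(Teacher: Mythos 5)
Your overall architecture (method of moments, two competing sources of fluctuation, one dominating in each regime) matches the paper's, and your final variance constants are correct. But there is a concrete gap in what you take as your engine: Proposition \ref{propgreen} does \emph{not} give a Wick expansion for the mixed moments of $Z(w)=\tr G(w)-\bb E\tr G(w)$, and in particular it does not produce a two-point function containing the fourth-cumulant term. It controls the moments of $[G]=\langle\ul G\rangle-(\ul{H^2}-1)m(z)m'(z)$, i.e.\ the trace \emph{after} the sparsity fluctuation has been subtracted off by the random shift $(\ul{H^2}-1)mm'$; the surviving Gaussian structure in \eqref{result}--\eqref{resultt} is purely the GOE-type part of size $(N\eta)^{-1}$. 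Consequently your ``step (a)'' for the regime $\eta\gg q/\sqrt N$ --- extracting $\bb E[Z(z)Z(z)]\approx 2(mm')^2N^2\bb E H_{12}^4$ from the cited proposition --- has no source: that contribution lives entirely in the subtracted term, and its asymptotic normality is a separate ingredient, namely Lemma \ref{prop4.4}(iv), which computes the moments of $\ul{H^2}-1$ directly and which your proposal never invokes.

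The repair is short and is exactly what the paper does: write $N^{-1}Z(z)=[G]+(\ul{H^2}-1)m(z)m'(z)$, note that $[G]\prec (N\eta)^{-1}+N^{-\delta/4}(\sqrt N q)^{-1}$ while $(\ul{H^2}-1)\prec(\sqrt N q)^{-1}$ with $(\ul{H^2}-1)/\sqrt{2\bb E H_{12}^4}\todist\cal N(0,1)$ by Lemma \ref{prop4.4}(iv), and observe that for $\eta\gg q/\sqrt N$ the shift dominates (giving the real Gaussian \eqref{111}, after using Lemma \ref{lem:m} to divide by $mm'$), while for $\eta\ll q/\sqrt N$ the term $[G]$ dominates and Proposition \ref{propgreen} identifies its moments as those of a complex Gaussian of variance $(2N^2\eta^2)^{-1}$, giving \eqref{222}. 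This also dissolves the ``delicate threshold'' issue you flag at the end: since each regime in the theorem is dominated by a single term, no joint Gaussianity of the two pieces and no uniformity near $\eta\asymp q/\sqrt N$ is required.
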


Note that \eqref{222} coincides with the mesoscopic linear statistics for GOE \cite{Kho1}, whose source is the extrapolation of WDM (or sine-kernel) statistics to mesoscopic scales. On the other hand, \eqref{111} comes from the sparsity of $H$. Thus our result shows that, although the eigenvalue statistics for sparse matrices are different from WDM statistics on large scales, WDM statistics remain valid on small enough mesoscopic scales. This bridges the results on microscopic \cite{EKYY2,HLY15} and macroscopic \cite{ST12,BG13} statistics of $H$.

The rest of the paper is organized as follows. In Section \ref{sec2.5} we introduce the notations and previous results that we use in this paper. In Section \ref{sec4} we prove our main results, Theorems \ref{mainthm} -- \ref{thmlinstat}, assuming a key result on centered moments of mesoscopic linear statistics, Proposition \ref{propgreen}. In
Section \ref{section4} we introduce a class of polynomials in the entries of the Green function, and construct a hierarchy of its Schwinger-Dyson equations. We then use this construction to prove Proposition \ref{propgreen}. Finally in Section \ref{sec5} we prove the general estimates for the class of polynomials of Green function that we used in Section \ref{section4}.

\subsection*{Conventions}
Throughout this paper, we regard $N$ as our fundamental large parameter. Any quantities that are not explicitly constant or fixed may depend on $N$; we almost always omit the argument $N$ from our
notation. We use $\tau$ to denote some generic (small) positive constant, whose value may change from one expression to the next. Similarly, we use $C$ to denote some generic (large) positive constant. For $A, B>0$, we use $A=O(B)$ to denote $A \leq  CB$ and $A\asymp B$ to denote $C^{-1}B\leq A \leq CB$. When we write $A \ll B$ and $A\gg B $, we mean $A \leq CN^{-\tau}B$ and $A \geq C^{-1} N^{\tau}B$ for some constants $C,\tau>0$ respectively.

\subsection*{Acknowledgment}
The author would like to thank Gaultier Lambert for helpful discussions, and thank Antti Knowles and Benjamin Schlein for many useful comments on the preliminary draft. This project has received funding from NCCR Swissmap, the Swiss National Science Foundation (SNF) grant No.\ 20020\_172623, and the European Research Council (ERC) under the European Union’s Horizon 2020 research and innovation programme (grant agreement No.\ 715539\_RandMat).

\nc	
\section{Preliminaries} \label{sec2.5}

In this section we collect notations and tools that are used in the paper.

Let $M$ be an $N \times N$ matrix. We denote $M^{*n}\deq (M^{*})^n$, $M^{*}_{ij}\deq (M^{*})_{ij} = \ol M_{ji}$, $M^n_{ij}\deq (M_{ij})^n$, and the normalized trace of $M$ by $\ul M \deq \frac{1}{N} \tr M$. We abbreviate $\langle X \rangle  \deq X-\bE X$ for any random variable $X$ with finite expectation. For the Green function $G$, we have the differential rule
\begin{equation} \label{diff}
\frac{\partial G_{ij}}{\partial H_{kl}}=-(G_{ik}G_{lj}+G_{il}G_{kj})(1+\delta_{kl})^{-1}\,.
\end{equation}Let $\mu$ be the empirical spectral measure of $H$. Its Stieltjes transform is denoted by
\begin{equation*}
\underline{G}(z)\deq \frac{1}{N}\tr G(z)=\int \frac{\mu(x)}{x-z}\,\dd x\,.
\end{equation*}
We also have
\begin{equation} \label{diffH}
\frac{\partial (\ul{H^2}-1)}{\partial H_{ij}} =\frac{4}{N}H_{ij}(1+\delta_{ij})^{-1} \prec \frac{1}{Nq} \quad \mbox{and} \quad \frac{\partial^2 (\ul{H^2}-1)}{\partial H^2_{ij}} =\frac{4}{N}(1+\delta_{ij})^{-1} \,.
\end{equation}
For $z \in \bC$ with $\im z \ne 0$, the Stieltjes transform  of the Wigner semicircle law is defined by 
\begin{equation*} 
 m(z)\deq \int \frac{\varrho(x) }{x-z}\,\dd x\,.
\end{equation*}
One elementary fact is that $m$ is the unique solution of
\begin{equation} \label{mmm}
1+zm(z)+m(z)^2=0
\end{equation}
satisfying $\im m(z) \im z >0$. Let us define the spectral domains 
$$ 
{\bf S} = \{E+\mathrm{i}\eta: |E| \le 4, 0 < \eta \le 4 \} \quad \mbox{and} \quad \widetilde{\bf S}_{\tau}=\{E+\mathrm{i}\eta: |E| \le 4, N^{-1+\tau} \leq \eta \le 4\}\,.
$$
We denote the distance to spectral edge by 
\[
\kappa\equiv\kappa_E\deq\min\{|2-E|,|2+E|\}\,.
\]
\begin{lemma}[Basic properties of $m$] \label{lem:m} 
	We have
	\[
	|m(z)|\asymp 1 \quad\quad \mbox{and} \quad\quad
	|m'(z)|\asymp \frac{1}{\sqrt{\kappa+\eta}}
	\]
	for all $z \in \b S$. In particular, $|m'(z)|\asymp 1$ for all $z \in \b D_{\tau}$.
\end{lemma}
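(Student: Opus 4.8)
The plan is to derive both estimates directly from the defining quadratic equation \eqref{mmm}, together with the explicit square-root formula for $m$. First I would recall that solving \eqref{mmm} with the sign condition $\im m(z)\im z>0$ gives
\[
m(z)=\frac{-z+\sqrt{z^2-4}}{2}\,,
\]
where the branch of $\sqrt{\cdot}$ is chosen so that $\sqrt{z^2-4}\sim z$ as $z\to\infty$ in the upper half-plane (equivalently, $\im\sqrt{z^2-4}>0$ there). The bound $|m(z)|\asymp 1$ for $z\in\b S$ is then a compactness argument: on the compact set $\{|E|\le 4,\ 0<\eta\le 4\}$ (closing up at $\eta=0$ only where the formula stays continuous), $m$ is continuous and nowhere zero — indeed $m(z)=0$ would force $1=0$ in \eqref{mmm} — and $|m(z)|$ stays bounded since $|{-z+\sqrt{z^2-4}}|\le |z|+|z^2-4|^{1/2}\le C$ on this set; alternatively one uses $m(z)\overline{m(z)}\cdot(\text{something})$, but the cleanest route is: from \eqref{mmm}, $m(z)(z+m(z))=-1$, so $|m(z)|\cdot|z+m(z)|=1$, and since $|z|\le 4\sqrt2$ and $|m(z)|\le C$ on $\b S$ we get $|z+m(z)|\le C'$, hence $|m(z)|\ge 1/C'$. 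The upper bound $|m(z)|\le C$ itself follows from $\im m(z)=\int\varrho(x)\eta/((x-E)^2+\eta^2)\,\dd x$ and $|\re m(z)|=|\int\varrho(x)(x-E)/((x-E)^2+\eta^2)\,\dd x|\le\int\varrho(x)/|x-E|\,\dd x$ needs care near $E=\pm2$; it is easier to just invoke the explicit formula and bound $|\sqrt{z^2-4}|\le(|z^2-4|)^{1/2}\le C$ on $\b S$.

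Next, for the derivative, differentiating \eqref{mmm} gives $m'(z)(1+2m(z))=-m(z)$, so
\[
m'(z)=\frac{-m(z)}{1+2m(z)}=\frac{m(z)}{z+2m(z)}=\frac{m(z)}{\sqrt{z^2-4}}\,,
\]
using $1+2m(z)=z+2m(z)-(z+m(z))$ wait — more directly, from $m=(-z+\sqrt{z^2-4})/2$ one reads off $1+2m(z)=\sqrt{z^2-4}$. Since $z^2-4=(z-2)(z+2)$ and on $\b S$ we have $|z+2|\asymp 1$ or $|z-2|\asymp 1$ (at most one of the two factors is small, because $E\in[-4,4]$ and if both $|z\mp 2|$ were $\le$ a small constant then $|z-2|+|z+2|\ge 4$ would fail — concretely $|z-2|\le\kappa+\eta$-ish and $|z+2|\asymp 1$ when $E$ is near $2$), we obtain $|z^2-4|^{1/2}\asymp(\kappa+\eta)^{1/2}$. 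Combining with $|m(z)|\asymp 1$ yields $|m'(z)|\asymp(\kappa+\eta)^{-1/2}$. Finally, on $\b D_\tau$ we have by definition $|4-E^2|+\eta\ge\tau$, i.e. $\kappa+\eta\gtrsim\tau\asymp 1$, and also $\kappa+\eta\le C$, so $|m'(z)|\asymp 1$ there.

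The only mildly delicate point is the two-sided comparison $|z^2-4|\asymp\kappa+\eta$ with $z=E+\ii\eta$, $\kappa=\min\{|2-E|,|2+E|\}$: one should check the elementary inequalities $|z-2|\asymp(|2-E|+\eta)$, $|z+2|\asymp(|2+E|+\eta)$ uniformly on $\{|E|\le 4,\ \eta\le 4\}$ — each is a routine consequence of $\max\{a,b\}\le\sqrt{a^2+b^2}\le a+b$ for $a,b\ge0$ — and then note that the non-minimal of $|2-E|,|2+E|$ is $\asymp 1$ on the closed interval $E\in[-4,4]$, so that $|z^2-4|=|z-2||z+2|\asymp(\kappa+\eta)\cdot 1$. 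I do not expect a genuine obstacle here; the whole lemma is a standard deterministic computation once the explicit formula for $m$ and the factorization $z^2-4=(z-2)(z+2)$ are in hand.
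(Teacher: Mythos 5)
Your argument is correct in substance and is exactly the elementary computation the paper has in mind (its proof is just ``an elementary exercise using \eqref{mmm}''): the upper bound on $|m|$ from the explicit formula, the lower bound from $|m|\,|z+m|=1$, the derivative identity, and $|z^2-4|\asymp\kappa+\eta$ via the factorization. One slip to fix: differentiating \eqref{mmm} gives $m'(z)\,(z+2m(z))=-m(z)$, not $m'(z)(1+2m(z))=-m(z)$, and correspondingly the identity read off from the explicit formula is $z+2m(z)=\sqrt{z^2-4}$ (indeed $1+2m(z)=1-z+\sqrt{z^2-4}$); this does not affect the conclusion, since either way $|m'(z)|=|m(z)|/|z^2-4|^{1/2}\asymp(\kappa+\eta)^{-1/2}$.
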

\begin{proof}
	The proof is an elementary exercise using \eqref{mmm}.
\end{proof}

If $h$ is a real-valued random variable with finite moments
of all order, we denote by $\cal C_k(h)$ the $k$th cumulant of $h$, i.e.
\[
\cal C_k(h)\deq (-\ii)^k \cdot\big(\partial_{\lambda}^k \log \bb E \e^{\ii  \lambda h}\big) \big{|}_{\lambda=0}\,.
\]

We state the cumulant expansion formula, whose proof is given in e.g. \cite[Appendix A]{HKR}. 
\begin{lemma}[Cumulant expansion] \label{lem:cumulant_expansion}
	Let $f:\R\to\C$ be a smooth function, and denote by $f^{(k)}$ its $k$th derivative. Then, for every fixed $\ell \in\N$, we have 
	\begin{equation}\label{eq:cumulant_expansion}
	\mathbb{E}\big[h\cdot f(h)\big]=\sum_{k=0}^{\ell}\frac{1}{k!}\mathcal{C}_{k+1}(h)\mathbb{E}[f^{(k)}(h)]+\cal R_{\ell+1},
	\end{equation}	
	assuming that all expectations in \eqref{eq:cumulant_expansion} exist, where $\cal R_{\ell+1}$ is a remainder term (depending on $f$ and $h$), such that for any $t>0$,
	\begin{equation*} 
	\cal R_{\ell+1} = O(1) \cdot \bigg(\E\sup_{|x| \le |h|} \big|f^{(\ell+1)}(x)\big|^2 \cdot \E \,\big| h^{2\ell+4} \mathbf{1}_{|h|>t} \big| \bigg)^{1/2} +O(1) \cdot \bb E |h|^{\ell+2} \cdot  \sup_{|x| \le t}\big|f^{(\ell+1)}(x)\big|\,.
	\end{equation*}
\end{lemma}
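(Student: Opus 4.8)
The plan is to derive \eqref{eq:cumulant_expansion} from the classical moment--cumulant relations together with a crude Taylor estimate, after first normalizing $f$. Since $h$ has finite moments of all orders, $\lambda\mapsto\bb E[\e^{\ii\lambda h}]$ is smooth and nonvanishing near $0$; differentiating $\bb E[\e^{\ii\lambda h}]=\exp\log\bb E[\e^{\ii\lambda h}]$ and comparing Taylor coefficients at $\lambda=0$ gives the recursion $\bb E[h^{n+1}]=\sum_{k=0}^{n}\binom{n}{k}\cal C_{k+1}(h)\,\bb E[h^{n-k}]$ for all $n\ge0$. Taking $f(x)=x^{n}$ with $n\le\ell$ in \eqref{eq:cumulant_expansion}, so that $f^{(k)}(x)=\frac{n!}{(n-k)!}x^{n-k}$, this recursion shows that \eqref{eq:cumulant_expansion} holds \emph{exactly}, with $\cal R_{\ell+1}=0$, for every polynomial of degree at most $\ell$. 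Both sides of \eqref{eq:cumulant_expansion} are linear in $f$, and replacing $f$ by $f$ minus its degree-$\ell$ Taylor polynomial at $0$ alters neither $\cal R_{\ell+1}$ nor $f^{(\ell+1)}$; so I may assume henceforth that $f(0)=f'(0)=\cdots=f^{(\ell)}(0)=0$. This normalization is exactly what lets the remainder depend only on $f^{(\ell+1)}$.

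Under this assumption, Taylor's theorem with integral remainder gives the pointwise bounds $|f^{(k)}(x)|\le\frac{|x|^{\ell+1-k}}{(\ell+1-k)!}\sup_{|s|\le|x|}|f^{(\ell+1)}(s)|$ for $0\le k\le\ell$, and trivially $|f^{(\ell+1)}(x)|\le\sup_{|s|\le|x|}|f^{(\ell+1)}(s)|$. Since $\cal R_{\ell+1}=\bb E[hf(h)]-\sum_{k=0}^{\ell}\frac{1}{k!}\cal C_{k+1}(h)\,\bb E[f^{(k)}(h)]$, each term of $\cal R_{\ell+1}$ is bounded by (a cumulant factor times) $\bb E[\,|h|^{a}\sup_{|s|\le|h|}|f^{(\ell+1)}(s)|\,]$, with $a=\ell+2$ for the $\bb E[hf(h)]$ term and $a=\ell+1-k$ for the $k$-th term. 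I would split each such expectation over $\{|h|\le t\}$ and $\{|h|>t\}$: on the first event replace $\sup_{|s|\le|h|}$ by the constant $\sup_{|s|\le t}$ and pull it out, leaving $\bb E|h|^{a}$; on the second apply Cauchy--Schwarz to separate $\bb E[|h|^{2a}\mathbf 1_{|h|>t}]^{1/2}$ from $(\bb E\sup_{|s|\le|h|}|f^{(\ell+1)}(s)|^{2})^{1/2}$. For the $\bb E[hf(h)]$ term ($a=\ell+2$, no cumulant prefactor) this already produces both displayed terms of $\cal R_{\ell+1}$.

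It then remains to absorb the cumulant prefactors in the remaining terms and see that everything still collapses to the two displayed quantities. Two elementary facts do this: first, from the moment--cumulant formula $\cal C_{k+1}(h)$ is a universal polynomial in $m_1,\dots,m_{k+1}$ homogeneous of weight $k+1$, so by Lyapunov's inequality (whence $\bb E|h|^{d_1}\cdots\bb E|h|^{d_j}\le\bb E|h|^{d_1+\cdots+d_j}$ when $\sum d_i$ is fixed) one gets $|\cal C_{k+1}(h)|\le C_\ell\,\bb E|h|^{k+1}$; second, $\bb E[\phi(|h|)\psi(|h|)]\ge\bb E\phi(|h|)\,\bb E\psi(|h|)$ for nondecreasing $\phi,\psi$, which I apply conditionally on $\{|h|>t\}$. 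The ``$|h|\le t$'' pieces then collapse via $|\cal C_{k+1}(h)|\,\bb E|h|^{\ell+1-k}\le C_\ell\,\bb E|h|^{k+1}\bb E|h|^{\ell+1-k}\le C_\ell\,\bb E|h|^{\ell+2}$ into the second term of $\cal R_{\ell+1}$; the ``$|h|>t$'' pieces collapse via $\bb E|h|^{k+1}\le\bb E[|h|^{2k+2}]^{1/2}$ followed by $\bb E|h|^{2k+2}\cdot\bb E[|h|^{2(\ell+1-k)}\mathbf 1_{|h|>t}]\le C_\ell\,\bb E[|h|^{2\ell+4}\mathbf 1_{|h|>t}]$, this last inequality being obtained by splitting $\bb E|h|^{2k+2}$ at $t$ and using $\bb E[|h|^{2k+2}\mathbf 1_{|h|\le t}]\le t^{2k+2}$ together with $|h|^{2(\ell+1-k)}\le t^{-2k-2}|h|^{2\ell+4}$ on $\{|h|>t\}$ for one piece, and the association inequality for the nondecreasing functions $r\mapsto r^{2k+2}$, $r\mapsto r^{2(\ell+1-k)}$ of $r=|h|$ for the other. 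This produces the first term of $\cal R_{\ell+1}$. The only nontrivial work is this last bookkeeping --- confirming that every term produced by Taylor's theorem and Cauchy--Schwarz is dominated by one of the two stated quantities; the earlier reductions are soft. (Finiteness of the expectations appearing in the bound is part of the hypothesis, and where the right-hand side is $+\infty$ the estimate is vacuous.)
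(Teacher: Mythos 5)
Your argument is correct. The paper does not prove this lemma itself but defers to \cite[Appendix A]{HKR}, and your proof follows the same standard route used there: verify the identity exactly for polynomials of degree at most $\ell$ via the moment--cumulant recursion, normalize $f$ by subtracting its Taylor polynomial so that only $f^{(\ell+1)}$ survives, and then control the remainder by splitting at $|h|=t$, applying Cauchy--Schwarz on the tail, and absorbing the cumulant prefactors with $|\mathcal C_{k+1}(h)|\le C_\ell\,\bb E|h|^{k+1}$ and H\"older/Lyapunov-type inequalities. All the bookkeeping steps you describe (in particular the bound $\bb E|h|^{2k+2}\cdot\bb E[|h|^{2(\ell+1-k)}\mathbf 1_{|h|>t}]\le C_\ell\,\bb E[|h|^{2\ell+4}\mathbf 1_{|h|>t}]$) check out.
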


The following result gives bounds on the cumulants of the entries of $H$, whose proof follows by the homogeneity of the cumulants.
\begin{lemma} \label{Tlemh}
	For every $k \in \bb N$ we have
	\begin{equation*}
	\cal C_{k}(H_{ij})=O_{k}(1/(Nq^{k-2}))
	\end{equation*}
	uniformly for all $i,j$.
\end{lemma}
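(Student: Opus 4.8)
The final statement to prove is Lemma~\ref{Tlemh}: for every $k\in\bb N$, $\cal C_k(H_{ij})=O_k(1/(Nq^{k-2}))$ uniformly in $i,j$.

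The plan is to exploit the standard expression of the $k$th cumulant as a polynomial (with universal combinatorial coefficients) in the moments $\bb E H_{ij}, \bb E H_{ij}^2, \dots, \bb E H_{ij}^k$, and then to plug in the moment bounds available for a sparse matrix. Concretely, the joint cumulant formula (equivalently, Leonov--Shiryaev) gives
\[
\cal C_k(h) = \sum_{\pi \in \mathcal P(\{1,\dots,k\})} (-1)^{|\pi|-1}(|\pi|-1)! \prod_{B\in \pi} \bb E\big[h^{|B|}\big]\,,
\]
where the sum runs over all set partitions $\pi$ of $\{1,\dots,k\}$ and the blocks $B\in\pi$ have sizes $|B|$. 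Thus the cumulant is a finite (depending only on $k$) linear combination of products $\prod_B \bb E[h^{|B|}]$ with $\sum_B |B| = k$. So it suffices to bound each such product.

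First I would record the two relevant inputs from Definition~\ref{def:sperse} (with the strictly-sparse variant (iv)$'$, though (iv) alone suffices for the upper bound): $\bb E H_{ij}=0$, $\bb E H_{ij}^2 = (1+O(\delta_{ij}))/N = O(1/N)$, and for $m\geq 3$, $\bb E|H_{ij}|^m \leq C_m/(Nq^{m-2})$. A convenient uniform way to state this: for every $m\geq 2$, $\bb E|H_{ij}|^m \leq C_m/(Nq^{m-2})$ (the $m=2$ case being $O(1/N)=O(1/(Nq^0))$, and $m\geq 3$ being the hypothesis). The $m=1$ moment vanishes, so only partitions all of whose blocks have size $\geq 2$ contribute. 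For such a partition $\pi$ with blocks of sizes $m_1,\dots,m_r$ ($m_j\geq 2$, $\sum_j m_j = k$, hence $r\leq k/2$), Hölder/the moment bound gives
\[
\Big|\prod_{j=1}^r \bb E\big[h^{m_j}\big]\Big| \leq \prod_{j=1}^r \frac{C_{m_j}}{N q^{m_j-2}} = \frac{\prod_j C_{m_j}}{N^{r} q^{\,k-2r}}\,.
\]
Since $q = N^\beta \geq 1$ and $r\geq 1$, we have $N^{-r} q^{-(k-2r)} = N^{-1} q^{-(k-2)} \cdot N^{-(r-1)} q^{\,2(r-1)} = N^{-1}q^{-(k-2)} (q^2/N)^{r-1} \leq N^{-1} q^{-(k-2)}$ because $q^2 = N^{2\beta}\leq N^{1-\tau}\ll N$ (using $\beta<1/2$), so $(q^2/N)^{r-1}\leq 1$. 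Hence each term is $O_k(1/(Nq^{k-2}))$, and summing the finitely many partitions (their number depends only on $k$) preserves the bound. This is exactly the "homogeneity" argument alluded to in the excerpt.

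The main (and really only) obstacle is bookkeeping: making sure that the cumulant-to-moment conversion is invoked in the correct form and that the exponent arithmetic $N^{-r}q^{\,2r}\leq N^{\,r}$, i.e. $q^2\leq N$, is used correctly to absorb all blocks beyond the first — it is essential here that $\beta<1/2$ strictly, which holds by Definition~\ref{def:sperse}. One should also note that the statement is only an $O$-bound, so no lower bound or exact leading-order computation is needed; the off-diagonal case $i\neq j$ and diagonal case $i=j$ are handled identically since all we use is $\bb E|H_{ij}|^m = O_m(1/(Nq^{m-2}))$, which holds in both cases. If one instead wants to track the leading behaviour (e.g. to later identify $\bb E H_{12}^4$ as the relevant variance), one observes that for $k\geq 3$ the dominant partition is the single block $\{1,\dots,k\}$, giving $\cal C_k(H_{ij}) = \bb E H_{ij}^k + O_k(1/(Nq^{k-1})) \asymp 1/(Nq^{k-2})$ under (iv)$'$, but this refinement is not required for the stated lemma.
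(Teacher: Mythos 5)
Your proof is correct and is precisely the ``homogeneity of the cumulants'' argument that the paper invokes without writing out: expand $\cal C_k$ via the moment--cumulant (set-partition) formula, drop singleton blocks since $\bb E H_{ij}=0$, bound each block by (iv), and absorb the extra blocks using $q^2\le N$ (i.e.\ $\beta<1/2$). The only (inessential) inaccuracy is in your closing aside: the error from multi-block partitions is $O\bigl((q^2/N)\cdot 1/(Nq^{k-2})\bigr)$, which is $O_k(1/(Nq^{k-1}))$ only when $\beta\le 1/3$, but as you note that refinement is not needed for the lemma.
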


The following is a standard complex analysis result from \cite{Davies}. 

\begin{lemma}[Helffer-Sj\"{o}strand formula] \label{HS}
	Let $f \in {C}^{2}(\bR)$, and let $\tilde{f}$ be the almost analytic extension of $f$ defined by
	\begin{equation*}
	\tilde{f}(x+\ii y)\deq f(x) +\ii yf^{\prime}(x)\,.
	\end{equation*}
	Let $\chi \in \cal C^{\infty}_c(\R)$ be a cutoff function satisfying $\chi(0) = 1$, and by a slight abuse of notation write $\chi(z) \equiv \chi (\im z)$.
	Then for any $\lambda \in \bR$ we have
	\begin{equation*} 
	f(\lambda)=\frac{1}{\pi}\int_{\bC}\frac{\partial_{\bar{z}}(\tilde{f}(z)\chi(z))}{\lambda-z}\,\dd^2z\,,
	\end{equation*}
	where $\partial_{\bar{z}}\deq \frac{1}{2}(\partial_x+\mathrm{i}\partial_y)$ is the antiholomorphic derivative and $\dd^2 z$ the Lebesgue measure on $\C$.
\end{lemma}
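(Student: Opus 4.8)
The plan is to deduce the identity from the Cauchy--Pompeiu (Cauchy--Green) representation formula applied to the compactly supported function $g(z)\deq \tilde f(z)\chi(z)$. First I would record the basic regularity: since $f\in C^2(\bR)$, the partial derivatives $\partial_x\tilde f=f'(x)+\ii yf''(x)$ and $\partial_y\tilde f=\ii f'(x)$ are continuous, so $\tilde f\in C^1(\bC)$, and multiplying by the smooth compactly supported $\chi$ gives $g\in C^1_c(\bC)$. A one-line computation also yields $\partial_{\bar z}\tilde f(x+\ii y)=\tfrac{\ii y}{2}f''(x)$, the "almost analytic" vanishing of $\partial_{\bar z}\tilde f$ on the real axis; this is not logically needed for the identity but explains why the integrand on the right-hand side is harmless across $\bR$.

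Next I would establish that for every $w\in\bC$ one has $g(w)=\tfrac{1}{\pi}\int_{\bC}\frac{\partial_{\bar z}g(z)}{w-z}\,\dd^2z$. To do this I would apply the complex form of Green's theorem, $\int_{\partial\Omega}F\,\dd z=2\ii\int_\Omega\partial_{\bar z}F\,\dd^2z$, to $F(z)=g(z)/(z-w)$ on the domain $\Omega=\{|z|<R\}\setminus\overline{\{|z-w|\le\epsilon\}}$, choosing $R$ large enough that $\supp g\subset\{|z|<R\}$ and $\epsilon>0$ small. On $\Omega$ the factor $1/(z-w)$ is holomorphic, hence $\partial_{\bar z}F=\partial_{\bar z}g/(z-w)$, and the outer boundary contributes nothing because $g$ vanishes there. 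Letting $\epsilon\downarrow 0$, the integral over the inner circle tends to $2\pi\ii\,g(w)$ by continuity of $g$, while the area integral converges to $2\ii\int_{\bC}\partial_{\bar z}g(z)/(z-w)\,\dd^2z$ since $\partial_{\bar z}g$ is bounded and $|z-w|^{-1}$ is locally integrable in $\bR^2$. Rearranging (and keeping track of the orientation of the inner boundary) gives the stated representation of $g(w)$.

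Finally I would specialize to $w=\lambda\in\bR$. Because $\im\lambda=0$, the definition of the almost analytic extension gives $\tilde f(\lambda)=f(\lambda)$, and since $\chi(0)=1$ we have $\chi(\lambda)=\chi(\im\lambda)=1$; therefore $g(\lambda)=f(\lambda)$, which is exactly the asserted formula $f(\lambda)=\tfrac{1}{\pi}\int_{\bC}\frac{\partial_{\bar z}(\tilde f(z)\chi(z))}{\lambda-z}\,\dd^2z$.

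This is a classical identity, so there is no serious obstacle; the only points demanding care are the justification of Green's theorem in the presence of the $(z-w)^{-1}$ singularity — handled by excising the small disk $\{|z-w|\le\epsilon\}$ and passing to the limit via dominated convergence, using that the singularity is integrable in two real dimensions — and the observation that the hypothesis $f\in C^2$ is precisely what makes $g$ continuously differentiable, so that Green's theorem is applicable on $\Omega$.
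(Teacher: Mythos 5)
Your approach --- the Cauchy--Pompeiu representation obtained from Green's theorem on a large disk with a small disk around the singularity excised --- is the standard proof of this identity; the paper gives no proof of the lemma and simply cites its reference, so the only meaningful comparison is to note that this is exactly the classical argument. The computation showing $\tilde f\in C^1$, the identity $\partial_{\bar z}F=\partial_{\bar z}g/(z-w)$ on the excised domain, the limit of the inner boundary integral to ($\pm$)$2\pi\ii\,g(w)$, the local integrability of $|z-w|^{-1}$ in two real dimensions, and the evaluation at $w=\lambda\in\bR$ via $\tilde f(\lambda)=f(\lambda)$ and $\chi(0)=1$ are all correct.

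There is, however, one genuine slip: the claim that $g\deq\tilde f\chi$ lies in $C^1_c(\bC)$. The cutoff $\chi$ is a function of $\im z$ only, so it confines $g$ to a horizontal strip $\{|\im z|\le R_0\}$ but does nothing in the real direction; for a general $f\in C^2(\bR)$ the function $g$ is not compactly supported, no radius $R$ with $\supp g\subset\{|z|<R\}$ exists, and the outer boundary term in Green's theorem does not vanish. (Indeed, for a general $f\in C^2(\bR)$ the right-hand side of the lemma need not even converge: $\partial_{\bar z}(\tilde f\chi)$ involves $f$, $f'$, $f''$ with no decay in $x$.) The repair is routine but must be stated: either assume $f$ has compact support --- which is the only case the paper uses, e.g.\ in the proof of Proposition \ref{prop3.2} --- or impose integrability of $f$, $f'$, $f''$ and run the excision argument on $\{|z|<R\}$ followed by $R\to\infty$, checking that the outer boundary contribution and the tail of the area integral vanish in that limit. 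With that amendment your proof is complete.
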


The following definition introduces a (conventional) notion of a high-probability bound that is used commonly in random matrix theory. 
\begin{definition}[Stochastic domination] \label{def:2.3} 
	Let $$X=\pb{X^{(N)}(u): N \in \bN, u \in U^{(N)}}\,,\qquad Y=\pb{Y^{(N)}(u): N \in \bN, u \in U^{(N)}}$$ be two families of random variables, where $Y^{(N)}(u)$ are nonnegative and $U^{(N)}$ is a possibly $N$-dependent parameter set. We say that $X$ is stochastically dominated by $Y$, uniformly in $u$, if for all (small) $\varepsilon>0$ and (large) $D>0$ we have
	\begin{equation*} 
	\sup\limits_{u \in U^{(N)}}	\bP \left[ \big|X^{(N)}(u)\big| > N^{\varepsilon} Y^{(N)}(u) \right] \le N^{-D}
	\end{equation*} 
	for large enough $N \ge N_0(\varepsilon,D)$. If $X$ is stochastically dominated by $Y$, uniformly in $u$, we use the notation $X \prec Y$, or equivalently $X=O_{\prec}(Y)$. (Note that for
	deterministic $X$ and $Y$, $X =O_\prec(Y)$ means $X= O_{\epsilon}(N^{\epsilon}Y)$ for any $\epsilon> 0$.)
\end{definition}

 Next we recall the local semicircle law for Erd\H{o}s -R\'enyi graphs in \cite{EKYY1}.

\begin{theorem}[Theorem 2.8,\cite{EKYY1}] \label{refthm1}
	Let $H$ be a sparse matrix defined as in Definition \ref{def:sperse}. We have
	\begin{equation*} 
	\max\limits_{i,j}|G_{ij}(z)-\delta_{ij}m(z)| \prec \frac{1}{q}+\sqrt{\frac{\im m(z)}{N\eta}}+
\frac{1}{N\eta}
	\end{equation*} 
	and 
	\begin{equation*}
	|\ul{G}-m |\prec \frac{1}{q}\wedge\frac{1}{q^2(\eta+\kappa_E)}+\frac{1}{N\eta}
	\end{equation*}
	uniformly in $z =   
	E+\mathrm{i}\eta \in \bf S$. 
\end{theorem}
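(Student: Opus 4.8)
The plan is to prove this sparse local semicircle law (the content of \cite{EKYY1}) by the self-consistent vector equation method, with the modifications dictated by the heavy tails of the entries. First I would fix $z=E+\ii\eta\in{\bf S}$ and introduce the control quantities $\Lambda_d\deq\max_i|G_{ii}-m|$, $\Lambda_o\deq\max_{i\ne j}|G_{ij}|$, $\Lambda\deq|\ul{G}-m|$, and $\Psi\deq\frac1q+\sqrt{(\im m+\Lambda)/(N\eta)}$. The Schur complement formula applied to index $i$ gives
\[
\frac1{G_{ii}}=H_{ii}-z-\sum_{k,l\ne i}H_{ik}G^{(i)}_{kl}H_{li}=-z-\ul{G}+\Upsilon_i\,,
\]
with $G^{(i)}$ the resolvent of the minor of $H$ obtained by deleting row and column $i$, and $\Upsilon_i$ collecting $H_{ii}$, the centered diagonal term $\sum_k(H_{ik}^2-N^{-1})G^{(i)}_{kk}$, the off-diagonal bilinear form $\sum_{k\ne l}H_{ik}G^{(i)}_{kl}H_{li}$, and the deterministic $O(N^{-1})$ difference $\ul{G}-\ul{G}^{(i)}$. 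Subtracting the identity $1+zm+m^2=0$ recasts this as a perturbation of the scalar equation solved by $m$, with all errors controlled by $\Upsilon_i$ and $\Lambda$.

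Next I would establish the entrywise bound. The probabilistic input is a concentration inequality for linear and bilinear forms in one row of $H$; because $\bb E|H_{ij}|^k\asymp(Nq^{k-2})^{-1}$, the usual Wigner-type estimates acquire extra powers of $q^{-1}$, e.g.\ $|\sum_k H_{ik}B_k|\prec q^{-1}\max_k|B_k|+(N^{-1}\sum_k|B_k|^2)^{1/2}$ for deterministic $B$, and likewise for bilinear forms. Since $G^{(i)}$ is independent of the $i$-th row, applying these with $B=G^{(i)}$, together with the Ward identity $\sum_l|G^{(i)}_{kl}|^2=\eta^{-1}\im G^{(i)}_{kk}$ and the minor identity $G^{(i)}_{kl}=G_{kl}-G_{ki}G_{il}/G_{ii}$, gives $\max_i|\Upsilon_i|+\Lambda_o\prec\Psi$. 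The stability of $1+zm+m^2=0$ --- one divides by $1-m^2$, whose modulus is $\asymp\sqrt{\kappa_E+\eta}$, so stability only deteriorates near the spectral edge --- then yields $\Lambda_d\prec\Psi$ once $\Lambda$ is known to be $o(1)$. That a priori smallness is obtained by a continuity argument in $\eta$: the bounds are trivial at $\eta=4$, the three quantities are Lipschitz in $\eta$ with polynomial constant, and the self-consistent equation forbids intermediate values with high probability, so the small regime propagates down to $\eta=N^{-1+\tau}$. This gives the first estimate.

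The first estimate yields $\Lambda\prec\Psi\sim q^{-1}+\cdots$; to reach $\frac1q\wedge\frac1{q^2(\eta+\kappa_E)}+\frac1{N\eta}$ I would not use the equation entrywise but average it. Writing $G_{ii}-m=m^2(\ul G-m)-m^2\Upsilon_i+m^3(\ul G-m-\Upsilon_i)^2+\cdots$ and averaging over $i$ produces a scalar equation for $w\deq\ul G-m$ that, near the edge, is genuinely quadratic (linear coefficient $1-m^2$ of size $\sqrt{\kappa_E+\eta}$), with inhomogeneity built from $\overline\Upsilon\deq N^{-1}\sum_i\Upsilon_i$ and $\overline{\Upsilon^2}\deq N^{-1}\sum_i\Upsilon_i^2$. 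The key point is that, although each $\Upsilon_i\prec\Psi$, a fluctuation-averaging estimate (proved by a cumulant expansion in the entries of $H$) gives $\overline\Upsilon\prec\Psi^2$, while $\overline{\Upsilon^2}$ has nonzero mean $\bb E[\Upsilon_i^2]\asymp q^{-2}+\im m/(N\eta)$, which acts as a deterministic correction to the semicircle equation and produces a bias $\bb E\ul G-m$ equal to $q^{-2}$ times an explicit function of $z$ that is $\lesssim(\eta+\kappa_E)^{-1}$ near the edge. Solving the stable branch of the resulting equation, and capping by the a priori $q^{-1}$, gives exactly $|\ul G-m|\prec\frac1q\wedge\frac1{q^2(\eta+\kappa_E)}+\frac1{N\eta}$.

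The main difficulty is the fluctuation-averaging bound $\overline\Upsilon\prec\Psi^2$ together with the companion expansion of $\bb E\ul G$: for sparse $H$ the cumulant expansion of $\bb E|\overline\Upsilon|^{2p}$ generates far more terms than in the Wigner case, since every cumulant $\bb E|H_{ij}|^k$ enters at order $(Nq^{k-2})^{-1}$, and one has to organize these through a power-counting (or graphical) bookkeeping in the two small parameters $q^{-1}$ and $(N\eta)^{-1}$ to certify that all non-leading contributions are negligible; extracting the sharp edge prefactor $(\eta+\kappa_E)^{-1}$ additionally requires tracking the $\sqrt{\kappa_E+\eta}$ factors from the stability analysis carefully through this expansion.
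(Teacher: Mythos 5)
This theorem is not proved in the paper at all --- it is imported verbatim from \cite{EKYY1} (with only a remark about relaxing the diagonal-variance assumption) --- and your sketch reproduces precisely the strategy of that reference: Schur complement self-consistent equation, sparse large-deviation bounds carrying the extra $q^{-1}$ factors, Ward and minor identities, a continuity bootstrap in $\eta$, and fluctuation averaging plus the deterministic $q^{-2}$ bias in $\overline{\Upsilon^2}$ to upgrade the averaged bound for $\ul{G}-m$. The outline is correct and identifies the genuinely hard steps (the sparse fluctuation-averaging bookkeeping and the edge stability/dichotomy argument) in the right places.
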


\begin{remark}
	Theorem \ref{refthm1} was proved in \cite{EKYY1} under the additional assumption $\bb E H^2_{ii}=1/N$ for all $i$. However, the proof is insensitive to the variance of the diagonal entries, and one can easily repeat the steps in \cite{EKYY1} under the general assumption $\bb E H_{ij}^2=C_i/N$. A weak local law for $H$ with general variances on the diagonal can also be found in \cite{HKM18}.
\end{remark}

We also need the following result from \cite{EKYY1} concerning the density of states of $A$.

\begin{lemma}[Theorem 2.10,\cite{EKYY1}]  \label{cor2} 
Let $\widetilde{\mu}$ be the empirical eigenvalue density of $A$. For any interval $I\subset \bb R$, we have
	\[
	\big|\widetilde{\mu}(I)-\varrho(I) \big| \prec \frac{1}{N}+\frac{|I|}{q}\,.
	\]
	\end{lemma}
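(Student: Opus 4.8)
Here is the plan I would follow; the statement is a standard consequence of the local semicircle law (Theorem~\ref{refthm1}), obtained by the smoothing argument of \cite{EKYY1}.

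\emph{Reduction to $H$ and to moderate intervals.} Since $A = H + f\,\b e\b e^{*}$ with $f\geq 0$, the eigenvalues of $A$ and of $H$ interlace, so their counting functions differ by at most $1$ at every point; hence $\absb{\widetilde\mu(I) - \mu_{H}(I)} \leq 2/N$ for all $I$, where $\mu_{H}$ is the empirical spectral measure of $H$, and it suffices to bound $\absb{\mu_{H}(I) - \varrho(I)}$. Since $\spec(H)\subseteq[-3,3]$ with overwhelming probability and $\supp\varrho\subseteq[-2,2]$, I may assume $I\subseteq[-3,3]$. Moreover, for any $E$ and $w\in[N^{-1+\tau},1]$, Theorem~\ref{refthm1} gives $\mu_{H}([E-w,E+w]) \leq 2w\,\im\ul G(E+\ii w) \prec w+N^{-1}$; taking $w=N^{-1+\tau}$ disposes of all intervals with $\abs I\leq N^{-1+\tau}$, so I may also assume $N^{-1+\tau}\leq\abs I\leq 6$.

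\emph{Smoothed comparison.} Fix $I=[E_{1},E_{2}]$ and a scale $\eta\in[N^{-1+\tau},1]$. Writing $P_{\eta}$ for the Poisson kernel and using $\pi(\nu*P_{\eta})(x)=\im\int(x-t+\ii\eta)^{-1}\dd\nu(t)$, one has $(\mu_{H}*P_{\eta})(I)-(\varrho*P_{\eta})(I)=\tfrac1\pi\int_{I}\im(\ul G-m)(x+\ii\eta)\,\dd x$. The unsmoothing errors obey $\absb{\nu(I)-(\nu*P_{\eta})(I)}\leq\int\min(1,\eta/\dist(x,\partial I))\,\dd\nu(x)$, which is $\lesssim\eta\log(1/\eta)$ for $\nu=\varrho$ (as $\norm{\varrho}_{\infty}\leq\pi^{-1}$) and, by a dyadic decomposition around $\partial I$ together with the window bound above, $\prec\eta\log(1/\eta)+N^{-1}$ for $\nu=\mu_{H}$. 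Therefore
\[
\mu_{H}(I)-\varrho(I)=\frac1\pi\int_{I}\im\,\langle\ul G\rangle(x+\ii\eta)\,\dd x+\frac1\pi\int_{I}\im\pb{\bE\ul G-m}(x+\ii\eta)\,\dd x+O_{\prec}\!\pB{\eta\log\tfrac1\eta+\tfrac1N}
\]
(equivalently one may route this through the Helffer--Sj\"ostrand formula, Lemma~\ref{HS}, applied to a mollified indicator of $I$).

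\emph{The two integrals, and the choice of $\eta$.} For the deterministic integral I would use Theorem~\ref{refthm1} sharpened for the \emph{expectation}: by the cumulant/self-consistent-equation analysis of \cite{EKYY1} one has $\absb{\bE\ul G-m}\prec\tfrac1q\wedge\tfrac1{q^{2}(\eta+\kappa_{x})}+\tfrac1N$, \emph{without} the $(N\eta)^{-1}$ degradation of the general bound, so that $\int_{I}\absb{\bE\ul G-m}(x+\ii\eta)\,\dd x\prec\abs I/q+N^{-1}$ — the $\abs I/q$ coming from the portion of $I$ within $O(q^{-1})$ of $\pm2$, the bulk portion contributing only the smaller $\abs I/q^{2}$. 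For the fluctuating integral, $\int_{I}\langle\ul G\rangle(x+\ii\eta)\,\dd x=N^{-1}\langle\tr g_{\eta}(H)\rangle$ with $g_{\eta}(\lambda)=\log(\lambda-E_{2}-\ii\eta)-\log(\lambda-E_{1}-\ii\eta)$; the variance of this linear statistic is dominated by the simultaneous shift of all eigenvalues, governed by $\langle\ul{H^{2}}\rangle=O_{\prec}((\sqrt Nq)^{-1})$ (cf.~\eqref{diffH}), the residual Wigner-type fluctuation of a counting function contributing only $N^{o(1)}$, so $\langle\tr g_{\eta}(H)\rangle\prec\sqrt N\,\abs I/q+N^{o(1)}$ and this integral is $\prec\abs I/q+N^{-1}$. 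Taking $\eta=N^{-1+\tau}$ makes the smoothing error $\prec N^{-1}$ and gives $\absb{\mu_{H}(I)-\varrho(I)}\prec N^{-1}+\abs I/q$.

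I expect the genuine obstacle to be the uniform control of the deterministic term at scales $\eta$ as small as $N^{-1}$: the bound of Theorem~\ref{refthm1} carries a spurious factor $\abs I/(N\eta)$ which, used naively, forces $\eta\gtrsim\sqrt{\abs I/N}$ and degrades the estimate to $\sqrt{\abs I/N}$ (worse than $\abs I/q$ exactly when $\abs I\lesssim q^{2}/N$). What is needed is that $\bE\ul G$ does not deteriorate as $\eta\downarrow0$ and that $\bE\ul G-m$ is $O(q^{-2})$ in the bulk and only $O(q^{-1})$ in an $O(q^{-1})$-neighbourhood of the edges; this is precisely what produces the two terms $N^{-1}$ and $\abs I/q$ of the statement. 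The remaining care — edge intervals and very short intervals — is routine and is absorbed into the reductions above.
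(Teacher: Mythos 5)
The paper does not prove this lemma at all: it is imported verbatim as Theorem~2.10 of \cite{EKYY1}, so there is no internal proof to compare against, and you are supplying an argument where the author supplies only a citation. Your overall strategy — interlacing to pass from $A$ to $H$, a smoothing/Helffer--Sj\"ostrand comparison, the term $|I|/q$ coming from integrating the $O(1/q)$ deviation of $\ul G$ from $m$ over $I$, and the term $1/N$ coming from the endpoints — is indeed the standard route and is the same mechanism the paper itself uses in its Proposition~3.2. However, as written the argument has concrete gaps.

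First, the fixed-height Poisson smoothing at $\eta=N^{-1+\tau}$ does not make the unsmoothing error $\prec N^{-1}$: the error $\int\min(1,\eta/\dist(x,\partial I))\,\dd\nu(x)$ is of order $\eta\log(1/\eta)\asymp N^{-1+\tau}$, and a fixed $\tau>0$ is not absorbed by the $N^{\epsilon}$ slack in $\prec$; since the target is $N^{-1}+|I|/q$, this already fails for short intervals. To reach $N^{-1}$ one must mollify at scale $N^{-1}$ and run the $y$-weighted integration by parts of the Helffer--Sj\"ostrand argument (exactly as in the proof of Proposition~\ref{prop3.2}, using the monotonicity of $y\mapsto y\im\ul G$ below the local-law scale), because the pointwise bound $\langle\ul G\rangle\prec q^{-1}+(N\eta)^{-1}$ is useless at $\eta=N^{-1}$. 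The same defect sinks your preliminary reduction: $\mu_H(I)\prec N^{-1+\tau}$ for $|I|\leq N^{-1+\tau}$ is not $\prec N^{-1}+|I|/q$, so short intervals are not "disposed of" — controlling the \emph{fluctuation} of the count in a short interval by $O_\prec(1)$ eigenvalues is precisely the content that the counting-function argument must deliver. Second, your bound $\int_I\langle\ul G\rangle\,\dd x\prec |I|/q+N^{-1}$ is justified by a CLT-type heuristic ("simultaneous shift governed by $\ul{H^2}-1$, residual fluctuation $N^{o(1)}$") which is essentially the conclusion of this paper's Theorem~\ref{cor1.4}; it is not available as an input here, and the honest input (Theorem~\ref{refthm1}) leaves a term $|I|/(N\eta)$ that your fixed-$\eta$ argument does not control. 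Likewise the improved bound on $\bE\ul G-m$ without the $(N\eta)^{-1}$ term is plausible but asserted, not proved. You have correctly located the genuine difficulty (control at scales $\eta\sim N^{-1}$), but the steps meant to overcome it are the ones that do not close.
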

We recall the magical Ward identity.
\begin{lemma}[Ward identity] \label{Ward}
	We have 
	\begin{equation*}
	\sum_j |G_{ij}|^2 =\frac{\im G_{ii}}{\eta}
	\end{equation*}
	for all $z=E+\ii \eta \in \b S$.
\end{lemma}

	Finally, we collect some estimates in the following lemma, whose proof is postponed to Appendix \ref{appA}.
	\begin{lemma}\label{prop4.4}
	(i)	For any fixed $m,n \in \bb N$ such that $m+n\geq 1$, we have
		\begin{equation}   \label{Tk}
		\big\langle \ul{G^{m}G^{*n}} \big\rangle \prec \eta^{1-(m+n)}\bigg(\frac{1}{q}+\frac{1}{N\eta}\bigg)
		\end{equation}
		as well as
		\begin{equation} \label{410}
		\big(G^mG^{*n}\big)_{ij}\prec
		\begin{cases}
		\eta^{1-(m+n)} & \txt{if } i = j
		\\
		\eta^{1-(m+n)}\Big(\frac{1}{q}+\frac{1}{\sqrt{N\eta}}\Big) & \txt{if } i \neq j\,,
		\end{cases}
		\end{equation}
		uniformly in $i,j$ and $z=E+\ii \eta \in \widetilde{\bf S}_{\tau}$.
		
		(ii) For any fixed $m,n \in \bb N$ such that $m+n\geq 1$, we have
		\begin{equation*}
		\sum_{i} | \big(G^mG^{*n}\big)_{ij}|^2 \prec \eta^{1-2(m+n)}
		\end{equation*}
		uniformly in $j$ and $z=E+\ii \eta \in \widetilde{\bf S}_{\tau}$.
		
		(iii) For $k=2,3$, we have
		\begin{equation*}
		\bb E\ul{G^k} \prec \Big(\frac{1}{q}+\frac{1}{N\eta}+\eta\Big) \eta^{1-k}
		\end{equation*}
		uniformly for all $z=E+\ii \eta \in {\bf D}_{\tau}$.
		
		(iv) For any fixed $n \in \bb N_+$,
		\begin{equation} \label{H is gaussian}
		\bb E (\ul{H^2}-1)^n =\begin{cases}
		(n-1)!!\, (2\bb E H_{12}^4)^{n/2}+O\left(\frac{1}{(\sqrt{N}q)^n}\cdot \frac{q}{\sqrt{N}}\right) \quad &\mbox{ if $n$ is even } \vspace{0.2cm}\\
		O\left(\frac{1}{(\sqrt{N}q)^n}\cdot \frac{q}{\sqrt{N}}\right) \quad &\mbox{ if $n$ is odd. }
		\end{cases}
		\end{equation}
	\end{lemma}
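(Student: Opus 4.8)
The plan is to obtain parts (i)–(iii) as fairly direct consequences of the local semicircle law (Theorem~\ref{refthm1}), the Ward identity (Lemma~\ref{Ward}), and Cauchy's integral estimates, and to prove (iv) by a direct moment (Wick) expansion of the i.i.d.\ sum defining $\ul{H^2}-1$. For (i) I would start from the crude forms of Theorem~\ref{refthm1}: $\langle\ul G(z)\rangle\prec 1/q+1/(N\eta)$ on $\widetilde{\bf S}_\tau$ (and the same for $\ul{G^*}$), and, using $\im m\lesssim 1$ and $N\eta\ge N^\tau$, $G_{ii}\prec 1$ while $G_{ij}\prec 1/q+1/\sqrt{N\eta}$ for $i\ne j$. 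Since $\ul{G^l}=\tfrac1{(l-1)!}\partial_z^{l-1}\ul G$ and $(G^l)_{ij}=\tfrac1{(l-1)!}\partial_z^{l-1}G_{ij}$, a Cauchy estimate on a disk of radius $\eta/2$ about $z$ — made uniform via a polynomial net, using the deterministic bound $\|\partial_z G\|\le\eta^{-2}$ — upgrades these to $\langle\ul{G^l}\rangle\prec\eta^{1-l}(1/q+1/(N\eta))$, $(G^l)_{ii}\prec\eta^{1-l}$, and $(G^l)_{ij}\prec\eta^{1-l}(1/q+1/\sqrt{N\eta})$. To reach mixed products I would partial-fraction $\big((x-z)^m(x-\bar z)^n\big)^{-1}=\sum_l A_l(x-z)^{-l}+\sum_l B_l(x-\bar z)^{-l}$ with $|A_l|,|B_l|\lesssim\eta^{l-m-n}$, so that $\ul{G^mG^{*n}}=\sum_l A_l\ul{G^l}+\sum_l B_l\ul{G^{*l}}$ (and likewise entrywise); inserting the previous bounds yields \eqref{Tk} and \eqref{410}.

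For (ii) I would use that $G(z)$ and $G^*(z)=G(\bar z)$ commute and $GG^*=(H-z)^{-1}(H-\bar z)^{-1}=\eta^{-1}\im G$, so that $\sum_i|(G^mG^{*n})_{ij}|^2=(G^nG^{*m}G^mG^{*n})_{jj}=\eta^{-(m+n)}\big((\im G)^{m+n}\big)_{jj}$; since $0\le\im G\le\eta^{-1}$ as operators and $(\im G)_{jj}\prec 1$ (from the entrywise law and $\im m\lesssim1$), the diagonal entry is $\le\|\im G\|^{m+n-1}(\im G)_{jj}\prec\eta^{1-(m+n)}$, giving the claim. For (iii), on ${\bf D}_\tau$ one has $\kappa+\eta\ge\tau$, so the second bound of Theorem~\ref{refthm1} collapses to $|\ul G-m|\prec 1/q^2+1/(N\eta)$; promoting $\prec$ to expectations (legitimate since $|\ul G|\le\eta^{-1}$) gives $\bb E\ul G-m\prec 1/q^2+1/(N\eta)$. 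Differentiating $k-1$ times by Cauchy on ${\bf D}_{\tau/2}$ and using $|m^{(k-1)}(z)|=O(1)$ there (Lemma~\ref{lem:m}) gives $\bb E\ul{G^k}=\tfrac1{(k-1)!}\partial_z^{k-1}\bb E\ul G\prec 1+\eta^{-(k-1)}(1/q^2+1/(N\eta))$, which for $k=2,3$ is $\le(1/q+1/(N\eta)+\eta)\eta^{1-k}$.

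For (iv), write $\xi_{ij}\deq H_{ij}^2-\bb E H_{ij}^2$ and decompose $\ul{H^2}-1=S+D+c$ with $S\deq\frac2N\sum_{i<j}\xi_{ij}$, $D\deq\frac1N\sum_i\xi_{ii}$, and $c\deq\frac1N\sum_{ij}\bb E H_{ij}^2-1=O(1/N)$ deterministic. The variables $\{\xi_{ij}\}_{i<j}$ are i.i.d.\ mean-zero with $\bb E\xi_{12}^2=\bb E H_{12}^4-1/N^2$ and $\bb E|\xi_{12}|^k\asymp 1/(Nq^{2k-2})$ for $k\ge2$ by (iv)$'$, and $D$ is independent of $S$ with $\bb E D^2\asymp1/(Nq)^2$. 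Expanding $\bb E(\ul{H^2}-1)^n=\sum_{j+l+m=n}\binom{n}{j,l,m}c^l\,\bb E S^j\,\bb E D^m$ (using independence of $S$ and $D$), every term with $j<n$ carries a factor $c$ or $D$; since $D$ and $c$ are smaller than the scale $(\sqrt Nq)^{-1}$ of $S$ by factors $N^{-1/2}$ and $q/\sqrt N$ respectively, each such term is $O\big((\sqrt Nq)^{-n}q/\sqrt N\big)$, the extremal case being $l=1$, $m=0$, $j=n-1$ with $n$ odd, which is exactly of that order. For the main term $\bb E S^n=(2/N)^n\sum\bb E[\xi_{i_1j_1}\cdots\xi_{i_nj_n}]$, independence and $\bb E\xi=0$ leave only index configurations whose $n$ pairs form a partition into blocks of size $\ge2$; a partition into $b$ blocks of sizes $k_1,\dots,k_b$ contributes $\lesssim(2/N)^n\binom N2^b\prod_l\bb E|\xi_{12}|^{k_l}\asymp 2^n(Nq^2)^{-(n-b)}$, so $b\le n/2$ forces the total to be $O((Nq^2)^{-n/2})$. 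The perfect matchings ($b=n/2$) give $(n-1)!!\binom N2^{n/2}(2/N)^n(\bb E\xi_{12}^2)^{n/2}$, which after $\binom N2=\tfrac{N^2}{2}(1+O(N^{-1}))$ and $\bb E\xi_{12}^2=\bb E H_{12}^4(1+O(q^2/N))$ equals $(n-1)!!(2\bb E H_{12}^4)^{n/2}+O\big((\sqrt Nq)^{-n}q/\sqrt N\big)$, while every partition with $b\le n/2-1$ is $O((Nq^2)^{-n/2-1})\subset O\big((\sqrt Nq)^{-n}q/\sqrt N\big)$. When $n$ is odd, no all-$\ge2$ partition attains $b=n/2$, so the Gaussian term is absent and $\bb E S^n$ is itself of the claimed error size; collecting the pieces gives \eqref{H is gaussian}.

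I expect the bookkeeping in (iv) to be the only genuine difficulty: one must simultaneously control the deterministic shift $c$, the diagonal sum $D$, the discrepancy between $\bb E\xi_{12}^2$ and $\bb E H_{12}^4$, the ``$\binom N2$ versus $N^2/2$'' and distinctness corrections, and the non-matching ($b<n/2$) contributions, and verify that each is bounded by the single quantity $(\sqrt Nq)^{-n}\cdot q/\sqrt N$ — which is tight precisely because of the $O(1/N)$ shift $c$ and, for odd $n$, its product with $\bb E S^{n-1}$. Parts (i)–(iii) are then routine once the partial-fraction reduction and the Cauchy-estimate-on-a-net argument are set up.
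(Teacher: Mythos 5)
Your proposal is correct, but in three of the four parts it takes a genuinely different route from the paper. For (i), the paper writes $G^mG^{*n}=\eta^{-(m+n)}f\big(\tfrac{H-E}{\eta}\big)$ and represents $\langle\ul{G^mG^{*n}}\rangle$ via the Helffer--Sj\"ostrand formula as an integral of $\langle\ul G(w)\rangle$ against $\partial_{\bar w}(\tilde f_\eta\chi)$, then inserts the net-uniform bound $\sup_{w\in\b S}|\langle\ul G(w)\rangle|(1/q+1/(N\im w))^{-1}\prec1$; your partial-fraction-plus-Cauchy argument uses exactly the same input (the uniform local law on a net plus deterministic Lipschitz continuity of $G$) and the two are essentially interchangeable, with coefficients $|A_l|\lesssim\eta^{l-m-n}$ playing the role of the $\eta^{-(m+n)}$ prefactor and the $L^1$ bounds on $f^{(j)}$. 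For (ii) the paper simply applies the diagonal case of \eqref{410} to $(G^{m+n}G^{*(m+n)})_{jj}$, whereas you use positivity of $\im G$, $GG^*=\eta^{-1}\im G$ and $\|\im G\|\le\eta^{-1}$; your version is slightly more self-contained since it only needs $\im G_{jj}\prec1$ rather than part (i) for higher powers. For (iii) the paper derives a self-consistent equation for $\bE\ul{G^2}$ by cumulant expansion and then estimates its right-hand side using (i) and (ii); your observation that $\bE\ul{G^k}=\tfrac{1}{(k-1)!}\partial_z^{k-1}\bE\ul G$ with $\bE\ul G-m$ a deterministic holomorphic function bounded by $O_\prec(1/q^2+1/(N\eta))$ on a neighbourhood of ${\bf D}_\tau$ (where $\kappa+\eta\gtrsim\tau$), combined with $|m^{(k-1)}|=O(1)$ there, is cleaner and avoids the expansion entirely; the $\eta\cdot\eta^{1-k}$ term in the target absorbs the $O(1)$ contribution of $m^{(k-1)}$ precisely for $k=2,3$ as you use. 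For (iv) the paper proposes a cumulant-expansion recursion in $n$ (details omitted), while you do a direct Wick-type expansion of the i.i.d.\ sum; your bookkeeping is right --- the partitions with $b\le n/2-1$ blocks contribute $O((Nq^2)^{-n/2-1})$, the diagonal part $D$ loses $N^{-m/2}$, and the deterministic shift $c=O(1/N)$ is indeed the term that saturates the error $(\sqrt Nq)^{-n}\cdot q/\sqrt N$ for odd $n$. The only cosmetic caveats are that Lemma~\ref{lem:m} states only $|m'|\asymp(\kappa+\eta)^{-1/2}$, so you should note that the same computation gives $|m''|=O((\kappa+\eta)^{-3/2})=O(1)$ on ${\bf D}_\tau$, and that the Cauchy disks must be kept inside $\b S$ (take radius $c\min\{\eta,1\}$), but neither affects the argument.
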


\section{Proof of main results} \label{sec4}

For $z =E+\ii \eta\in {\bf D}_{\tau}$, we write
\begin{equation*}
\alpha\deq -\log_N \eta
\end{equation*}
so that $\eta=N^{-\alpha}, \alpha \in [0,1-\tau]$. We define
\begin{equation} \label{gamma'}
\delta\equiv \delta(z)\deq \min\Big\{\beta,\frac{1}{2}-\beta,\frac{1-\alpha}{2}\Big\}>0\,.
\end{equation}
and
\begin{equation} \label{chi}
\xi\equiv \xi(z)=\frac{1}{8}\min\Big\{\frac{\alpha}{2},\delta\Big\}\geq 0\,,
\end{equation} 
We define the linear statistics with a random shift
\begin{equation*}
[G]\equiv [G(z)]\deq \frac{1}{N}\tr G(z) -\frac{1}{N}\bb E \tr G(z) - (\ul{H^2}-1) m(z)m'(z)\,.
\end{equation*}
Note that \eqref{H is gaussian} implies
\begin{equation} \label{bound for H^2}
\frac{1}{\sqrt{2\bb E H_{12}^4}}(\ul{H^2}-1) \overset{d}{\longrightarrow}\cal N(0,1) \quad \mbox{and} \quad \ul{H^2}-1 \prec N^{-1/2-\beta}\,.
\end{equation}
The term $\ul{H^2}-1$ was introduced in \cite{HLY} to study the eigenvalue fluctuations of $A$ near the edge.  

In this section we shall prove Theorems \ref{mainthm}--\ref{thmlinstat} assuming the following proposition, whose proof is postponed to Section \ref{section4}.

\begin{proposition} \label{propgreen}
	Let $m,n \in\bb N_+$. We have
	\begin{equation} \label{result}
	\bb E |[G]|^{2n}=\frac{n!}{2^n}\Big(\frac{1}{N\eta}\Big)^{2n}+O_{\prec}\bigg(\frac{N^{-\xi}}{(N\eta)^{2n}}+\bigg(\frac{N^{-\delta/4}}{\sqrt{N}q}\bigg)^{2n}\bigg)
	\end{equation}
	and
	\begin{equation} \label{resultt}
	\bE [G^*]^n[G]^m=O_{\prec}\bigg(\frac{N^{-\xi}}{(N\eta)^{m+n}}+\bigg(\frac{N^{-\delta/4}}{\sqrt{N}q}\bigg)^{m+n}\bigg)
	\end{equation}
	for $m \ne n$, uniformly for all $z \in {\bf D}_{\tau}$. 
\end{proposition}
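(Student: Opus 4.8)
The plan is to prove Proposition \ref{propgreen} by setting up a self-improving (bootstrap) estimate on the quantities $\bb E|[G]|^{2n}$ and $\bE[G^*]^n[G]^m$, driven by a hierarchy of Schwinger--Dyson equations. The starting point is to expand $\bE |[G]|^{2n} = \bE \big([G]^n \overline{[G]}^n\big)$ using the identity $z\ul G + 1 = \ul{G H}/N$ combined with the cumulant expansion formula (Lemma \ref{lem:cumulant_expansion}): write $\ul{GH} = \frac1N\sum_{ij} G_{ij}H_{ji}$, apply Lemma \ref{lem:cumulant_expansion} in each $H_{ij}$, and use the differential rule \eqref{diff} together with the cumulant bounds of Lemma \ref{Tlemh} to generate the expansion. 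The crucial algebraic point, as in \cite{HLY}, is that the random shift $(\ul{H^2}-1)m m'$ is designed precisely to cancel the leading order-$1/q^2$ (i.e. third-cumulant) term: the derivative identities \eqref{diffH} show that differentiating $(\ul{H^2}-1)$ produces exactly the structure that matches the problematic term coming from $\mathcal C_3(H_{ij})\sim 1/(Nq)$, so that $[G]$ fluctuates only on the optimal scale $1/(N\eta) + 1/(\sqrt N q)$.

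The key steps, in order: (1) Show a crude a priori bound $[G] \prec \eta^{-1}(q^{-2} + (N\eta)^{-1})$ (or its moment version), which follows from Theorem \ref{refthm1} and Lemma \ref{prop4.4}; this is the "input" of the bootstrap. (2) Expand $\bE z\ul G \cdot (\text{monomial in } [G],[G^*])$ via cumulant expansion; collect the terms into (a) the "ladder" term that reproduces $1 + z m$ and feeds back into the self-consistent equation, (b) the main Gaussian term $\frac{n!}{2^n}(N\eta)^{-2n}$ coming from the second-cumulant/Ward-identity contractions (this is where Lemma \ref{Ward} and the bounds \eqref{Tk}, \eqref{410} enter), (c) the sparse term of size $(\sqrt N q)^{-1}$ coming from the fourth cumulant $\mathcal C_4(H_{ij}) \asymp 1/(Nq^2)$, which after using \eqref{H is gaussian} gives the Gaussian $(\ul{H^2}-1)$-type contribution that has already been subtracted, and (d) error terms. (3) Control each error term: higher cumulants $\mathcal C_k$, $k\geq 5$, are suppressed by extra powers of $q^{-1}$; terms where a derivative hits the same monomial produce smaller factors; off-diagonal $G_{ij}$ with $i\neq j$ gain $q^{-1}+(N\eta)^{-1/2}$ from \eqref{410}. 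The point is that \emph{each} additional expansion improves the bad term by a factor $N^{-\tau}$, and since one cannot write finitely many expansions explicitly when $q = N^\epsilon$, one must instead phrase everything through the general polynomial class and its Schwinger--Dyson hierarchy of Section \ref{section4} — this is what makes the recursion close. (4) Iterate the bootstrap finitely many times (a number depending only on $\epsilon,\beta,\tau$) to push the error down to $N^{-\xi}(N\eta)^{-2n} + (N^{-\delta/4}(\sqrt N q)^{-1})^{2n}$. (5) For $m\neq n$, the same expansion applies but the leading Gaussian term vanishes by a parity/phase argument (an unequal number of $G$'s and $G^*$'s cannot be fully contracted into $|G_{ij}|^2$ factors), leaving only the error terms, which gives \eqref{resultt}.

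The main obstacle will be step (3)--(4): organizing the combinatorics of the cumulant expansion so that the recursion genuinely closes. Naively, differentiating a monomial of degree $2n$ in the Green-function entries produces a monomial of degree $2n+1$, and without a structural bookkeeping device the "number of $G$'s" grows and the estimates never terminate. This is exactly the difficulty \cite{HK2} addresses for covariances of Wigner resolvents: one defines a sufficiently rich class of polynomials in the $G_{ij}$ (closed under the differentiation \eqref{diff} and under the relevant contractions), assigns to each a "size" exponent, and proves that the Schwinger--Dyson equations relate a polynomial of a given size to polynomials of strictly smaller (effective) size plus controlled errors — so the hierarchy is well-founded. Adapting this to the sparse setting requires tracking the additional $q$-dependence coming from the higher cumulants $\mathcal C_k(H_{ij})\asymp 1/(Nq^{k-2})$ and from the $(\ul{H^2}-1)$ shift, and verifying that the cancellation of the third-cumulant term is robust under all the expansions, not just at leading order. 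A secondary technical nuisance is handling the spike $f\b e\b e^*$ in $A$ versus $H$ (controlling $\bE\lambda_i$ in terms of $\bE\ul G$), and the degeneracy near $E = 0$ where $m(z)m'(z)$ — hence the effective variance — degenerates, which is the source of the restriction $i\in\mathcal I$ and the separate $i/N\to 1/2$ statement; but these are downstream of Proposition \ref{propgreen} rather than part of its proof.
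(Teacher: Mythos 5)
Your plan follows the paper's proof essentially step for step: the same decomposition into the $\langle\ul G\rangle$ and $(\ul{H^2}-1)mm'$ contributions, the same cumulant-expansion/Schwinger--Dyson hierarchy over a class of Green-function polynomials closed under \eqref{diff} equipped with a size exponent that strictly improves (by a factor $N^{-\beta}$) at each expansion so the recursion terminates, the same bootstrap yielding first the bound $[G]\prec (N\eta)^{-1}+N^{-\delta/4}(\sqrt N q)^{-1}$ and then, by iterating the relation $\bb E|[G]|^{2n}\approx \frac{n}{2N^2\eta^2}\bb E|[G]|^{2n-2}$, the Gaussian moments $n!/2^n$; your explanation of why the leading term drops for $m\ne n$ (only $\ul{GG^{*2}}$, not $\ul{G^3}$, has a large deterministic part) is also the right one. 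One internal inconsistency is worth correcting: early on you attribute the problematic $1/q^2$ term, and the cancellation with $(\ul{H^2}-1)mm'$, to the \emph{third} cumulant ``$\cal C_3(H_{ij})\sim 1/(Nq)$''; in fact the third-cumulant terms are pure error (they always carry off-diagonal resolvent entries; see the treatment of $L_2$ in Lemma \ref{lem4.6}), and the cancellation occurs in the \emph{fourth}-cumulant term $L_3$ --- consistent with $\var(\ul{H^2}-1)\asymp \bb E H_{12}^4\asymp\cal C_4(H_{12})\asymp 1/(Nq^2)$ --- exactly as you state, correctly, in your item (2)(c). Be aware also that this cancellation is not visible at the level of the raw $\cal C_4$ term: one must expand that term twice more through the Schwinger--Dyson equation to replace $(G^{*2})_{ii}G^{*}_{jj}$ by its deterministic value and invoke the identity $\frac{1}{\bar z+\bar m}\big(\frac{\bar m^3}{-\bar z-2\bar m}+\bar m^2\big)=-\bar m\bar m'$ (see \eqref{4,43}); this is precisely the ``robustness of the cancellation under further expansions'' that you flag as the point needing verification.
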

We observe that Theorem \ref{thmlinstat} is an immediate consequence of Lemma \ref{lem:m}, Lemma \ref{prop4.4} (iv) and Proposition \ref{propgreen}. One can follow, e.g.\,the steps in \cite{HK}, to show Theorem \ref{thmlinstat} for general test functions. We do not pursue it here.
\subsection{Proof of Theorem \ref{cor1.4}}
In this section, we prove the following result, which trivially implies Theorem \ref{cor1.4} by \eqref{bound for H^2}.
\begin{proposition} \label{prop3.2}
	Let $E \in [-2+\tau,2-\tau]$. We have
	\[
	\Sigma(E)-\bb E \Sigma(E)-\frac{E\sqrt{4-E^2}}{4\pi} (\ul{H^2-1})N \prec N^{1/2-\beta-\zeta/16}\,.
	\]
\end{proposition}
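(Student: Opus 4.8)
The plan is to use the Helffer–Sjöstrand formula to express $\Sigma(E)$ in terms of $\tr G(z)$, and then to feed in the estimate $[G] \prec (N\eta)^{-1} + (\sqrt N q)^{-1}$ coming from Proposition \ref{propgreen}. Concretely, write $\Sigma(E) = \sum_i \mathbbm 1_{(-\infty,E]}(\lambda_i)$; after a standard regularization of the indicator by a smooth function $f_E$ that equals $1$ on $(-\infty, E - \ell]$, $0$ on $[E+\ell,\infty)$ with $\ell = N^{-1+\tau'}$, the error between $\Sigma(E)$ and $\tr f_E(H)$ is controlled by the number of eigenvalues in a window of size $\ell$ around $E$, which by Lemma \ref{cor2} is $\prec N\ell/1 + N\ell/q = O_\prec(N^{\tau'} + N^{\tau'}/q)$ — already too large, so in fact one should smooth on scale $\ell = N^{-1}$ up to $N^{o(1)}$ and absorb the $N^{o(1)}$ into the $\prec$, noting that near a fixed $E \in [-2+\tau, 2-\tau]$ the eigenvalue rigidity from Theorem \ref{refthm1}/Lemma \ref{cor2} gives $|\{i : |\lambda_i - E| \le N^{-1+\tau'}\}| \prec N^{\tau'}$, which is $\prec N^{1/2-\beta-\zeta/16}$ provided $\tau'$ is small enough relative to $\zeta$. (This is where the $\zeta/16$ slack matters.)

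The main term is then $\tr f_E(H) = \frac{1}{\pi}\int_{\C} \partial_{\bar z}(\tilde f_E(z)\chi(z))\, \tr G(z)\, \dd^2 z$. Taking centered versions, $\langle \tr f_E(H)\rangle = \frac{1}{\pi}\int_{\C}\partial_{\bar z}(\tilde f_E \chi)\, \langle \tr G(z)\rangle\, \dd^2 z$. I would split $\langle \tr G\rangle = N[G] + N(\ul{H^2}-1)\, m m'$ by the definition of $[G]$. The $[G]$-part is bounded, using $[G] \prec (N\eta)^{-1} + (\sqrt N q)^{-1}$ together with the standard Helffer–Sjöstrand bookkeeping (the $\eta^{-1}$ singularity is integrable against $\partial_{\bar z}(\tilde f\chi) = O(|\eta| |f''|)$ plus the $|\eta|^{-1}$-free piece from $\chi'$), yielding a contribution $\prec N^{o(1)}(1 + \sqrt N/q) \prec N^{1/2-\beta + o(1)}$; one must check the exponent beats $N^{1/2-\beta-\zeta/16}$, which requires tracking that the $\xi$, $\delta$ gains in Proposition \ref{propgreen} survive the $\eta$-integration down to $\eta \sim N^{-1+\tau}$ — this is the delicate part and likely where most of the work lies. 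The $(\ul{H^2}-1)$-part factors out the random scalar $\ul{H^2}-1$ (independent of $z$), leaving the deterministic integral $\frac{1}{\pi}\int_{\C}\partial_{\bar z}(\tilde f_E \chi)\, m(z) m'(z)\, \dd^2 z$, which by Helffer–Sjöstrand is exactly $\int f_E(x)\, (\varrho(x) \cdot (\text{something}))\,\dd x$ — more precisely it equals $\tr g(\,\cdot\,)$ evaluated against the semicircle with $g' $ related to $m m'$; computing it via residues/contour deformation and the identity $1 + zm + m^2 = 0$ gives the claimed constant $\frac{E\sqrt{4-E^2}}{4\pi}$. Indeed, $m m' = -\frac{1}{2}\frac{\dd}{\dd z}(m^2) = \frac{1}{2}\frac{\dd}{\dd z}(1+zm)$, and integrating the Helffer–Sjöstrand representation by parts in $z$ turns $\int \partial_{\bar z}(\tilde f\chi) m m'$ into a boundary term on $\R$ involving $f_E'$ and $\im(1+zm) \sim \pi x \varrho(x)$-type expressions; the $E\sqrt{4-E^2}/(4\pi)$ is the value of the resulting antiderivative at $x=E$.

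The main obstacle, as flagged, is the quantitative Helffer–Sjöstrand estimate: one needs $\int_{\C} |\partial_{\bar z}(\tilde f_E \chi)|\cdot \big((N\eta)^{-1} + (\sqrt N q)^{-1}\big)\, \dd^2 z$ to be $\prec N^{1/2-\beta-\zeta/16}$, and the $(N\eta)^{-1}$ term integrated over $\eta \in [N^{-1+\tau}, 1]$ produces a $\log N$ (harmless) times $N^{o(1)}$, while one must also control the region $\eta \le N^{-1+\tau}$ by going back to eigenvalue rigidity rather than the Green-function bound. The bookkeeping of which error terms in \eqref{result}–\eqref{resultt} are used (only the $n=1$ case, i.e.\ the crude bound $[G]\prec (N\eta)^{-1}+(\sqrt N q)^{-1}$, rather than the full moment expansion) should keep this manageable. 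A secondary subtlety is justifying the deterministic integral identity with the correct constant and orientation of the contour; this is routine once one writes $m(z)= \frac{-z+\sqrt{z^2-4}}{2}$ and deforms to the cut $[-2,2]$.
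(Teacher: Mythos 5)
Your proposal follows essentially the same route as the paper: regularize the indicator on scale $N^{-1}$, apply Helffer--Sj\"ostrand, split $\langle \ul G\rangle$ into $[G]+(\ul{H^2}-1)mm'$, evaluate the deterministic $mm'$-integral to get $E\sqrt{4-E^2}/(4\pi)$, and control the $[G]$-integral via Proposition \ref{propgreen} (correctly noting that the extra $\zeta/16$ must come from the $N^{-\delta/4}$ gain there, with the region $\eta\le N^{-1+\zeta/2}$ and the $f''$-term handled by monotonicity and integration by parts respectively). The only points you leave implicit are the rank-one interlacing between $A$ and $H$ (error $O(1)$) and a sign in the heuristic $mm'=\tfrac12\frac{\dd}{\dd z}(m^2)$, neither of which affects the argument.
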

\begin{proof}
Let $f \in C^{\infty}(\bb R)$ such that $f=1$ in $(-3+\frac{1}{N},E-\frac{1}{N}]$ and $f=0$ in $(-\infty,-3-\frac{1}{N}]\cup[E+\frac{1}{N},+\infty)$. We further assume $|f'|=O(N)$ and $|f''|=O(N^2)$. Let us write $z=x+\ii y$ and choose $\chi\equiv \chi(y)$ such that $\chi(y)=1$ for $|y|\leq 1$ and $\chi(y)=0$ for $|y|\geq 2$. Note that by Green's theorem we have
\[
\int_{\bb C} \partial_{\bar{z}} (\tilde{f}(z)\chi(z)) m(z)m'(z)\, \dd^2 z= \frac{1}{2}\int_{-2}^2 f(x) \frac{2-x^2}{\sqrt{4-x^2}} =\frac{E\sqrt{4-E^2}}{4}+O(N^{-1})\,,
\]
and Lemma \ref{HS} implies
\[
\tr f(H) -\bb E \tr f(H)=\frac{N}{\pi} \int_{\bb C} \partial_{\bar{z}} (\tilde{f}(z)\chi(z)) \langle \ul{G}\rangle \, \dd^2 z\,.
\] 
Combining the above two relations, and together with \eqref{bound for H^2}, we have
\begin{equation} \label{5.1}
\tr f(H) -\bb E \tr f(H)-\frac{E\sqrt{4-E^2}}{4\pi} (\ul{H^2-1})N  = \frac{N}{\pi} \int_{\bb C} \partial_{\bar{z}} (\tilde{f}(z)\chi(z)) [G(z)]\, \dd^2 z+O_{\prec}(N^{-1/2-\beta})\,.
\end{equation}
Recall the definition of $\zeta>0$ from \eqref{zeta}. By \eqref{result} we have
\begin{equation*}
[G(z)] \prec \frac{1}{Ny}+\frac{N^{-\zeta/16}}{\sqrt{N}q}
\end{equation*}
uniformly for $z=x+\ii y \in {\bf D}_{\zeta/2}$, and an $N^{-3}$-net argument \cite[Remark 2.7]{BK16} shows
\begin{equation}  \label{38}
\sup_{z \in \b D_{\zeta/2}} |[G(z)] |  \Big(\frac{1}{Ny}+\frac{N^{-\zeta/16}}{\sqrt{N}q}\Big)^{-1} \prec 1\,.
\end{equation}
Fix $\varepsilon>0$. By Theorem \ref{refthm1}, \eqref{bound for H^2} and an $N^{-3}$-net argument, we see that
\begin{equation} \label{399}
\sup_{z \in \b S,\, N^{-1+\varepsilon}\leq y \leq N^{-1+\zeta/2}} |Ny[G(z)]| \prec \sum_{N^{-1+\varepsilon}\leq y \leq N^{-1+\zeta/2}}Ny\Big(\frac{1}{q}+\frac{1}{Ny}+\frac{1}{\sqrt{N}q}\Big) \prec 1\,.
\end{equation}
From Theorem \ref{refthm1}, $|m(z)| \leq C$, and an $N^{-3}$-net argument, we have
\[
\sup_{|x|\leq 4}\max_{i,j}|G_{ij}(x+\ii N^{-1+\varepsilon})| \prec 1\,,
\]
and a deterministic monotonicity result \cite[Lemma 10.2]{BK16} shows
\[
\sup_{z =x+\ii y\in \b S,\, y \leq N^{-1+\varepsilon}}\max_{i,j}|NyG_{ij}(z)| \prec N^{\varepsilon}\,.
\]
Using the above relation, together with \eqref{bound for H^2}, \eqref{399} and the arbitrarity of $\varepsilon$, we have
\begin{equation} \label{39}
\sup_{z \in \b S,\, y \leq N^{-1+\zeta/2}} |Ny[G(z)]| \prec 1\,.
\end{equation}
We split
\begin{multline} \label{310}
\bigg| \frac{N}{\pi} \int_{\bb C} \partial_{\bar{z}} (\tilde{f}(z)\chi(z)) [G(z)]\, \dd^2 z\bigg|\leq C \bigg| N \int_{1\leq y \leq 2} (f(x)+\ii y f'(x))\chi'(y) [G(x+\ii y)]\, \dd^2 z\bigg|\\
+ C \bigg| N \int_{0<y \leq N^{-1+\zeta/2}} f''(x)y\chi(y) [G(z)]\, \dd^2 z\bigg|+C \bigg| N \int_{ N^{-1+\zeta/2}\leq y \leq 2} f''(x)y\chi(y) [G(z)]\, \dd^2 z\bigg|\,.
\end{multline} 

By \eqref{38} and $\|f\|_1+\|f'\|_1\leq C$ we have
\begin{equation} \label{dingman}
 \bigg| N \int_{1\leq y \leq 2} (f(x)+\ii y f'(x))\chi'(y) [G(x+\ii y)]\, \dd^2 z\bigg| \prec N\bigg(\frac{1}{N}+\frac{N^{-\zeta/16}}{\sqrt{N}q}\bigg) \prec N^{1/2-\beta-\zeta/16}\,.
\end{equation}
By \eqref{39} and $\|f''\|_1=O(N)$ we have
\begin{equation}
\bigg| N \int_{0<y \leq N^{-1+\zeta/2}} f''(x)y\chi(y) [G(z)]\, \dd^2 z\bigg| \prec  N \cdot \int_{0}^{N^{-1+\zeta/2}} 1\, \dd y= N^{\zeta/2} \leq N^{1/2-\beta-\zeta/2}\,.
\end{equation}
For the last term on RHS of \eqref{310}, we do integration by parts, first in $x$ and then in $y$, and get
\begin{multline*}
\bigg| N \int_{ N^{-1+\zeta/2}\leq y \leq 2} f''(x)y\chi(y) [G(z)]\, \dd^2 z\bigg|\leq \bigg| N \int_{ N^{-1+\zeta/2}\leq y \leq 2} f'(x)\chi(y) [G(z)]\, \dd^2 z\bigg|\\
+\bigg| N \int_{ 1\leq y \leq 2} f'(x)y\chi'(y) [G(z)]\, \dd^2 z\bigg|+\bigg| N \int f'(x)N^{-1+\zeta/2}\chi(N^{-1+\zeta/2}) [G(x+\ii N^{-1+\zeta/2})]\, \dd x\bigg|\,,
\end{multline*}
and again by \eqref{38} we have
\begin{equation} \label{312}
\bigg| N \int_{ N^{-1+\zeta/2}\leq y \leq 2} f''(x)y\chi(y) [G(z)]\, \dd^2 z\bigg| \prec N^{1/2-\beta-\zeta/16}\,.
\end{equation}
From \eqref{5.1}, \eqref{310}--\eqref{312} we have
\begin{equation} \label{3.14159}
\tr f(H) -\bb E \tr f(H)-\frac{E\sqrt{4-E^2}}{4\pi} (\ul{H^2-1})N  =O_{\prec}(N^{1/2-\beta-\zeta/16})\,.
\end{equation}
Let $\widetilde{\Sigma}$ be the eigenvalue counting function of $H$. Note that $\|H\|\leq 5/2$ with overwhelming probability.  Thus
	\begin{multline*}
	\Big|\tr f(H)- \widetilde{\Sigma}(E) \Big|\leq C \Big(\widetilde{\Sigma}(E+N^{-1})-\widetilde{\Sigma}(E-N^{-1})+\widetilde{\Sigma}(-3+N^{-1})\Big)\\
	=C \Big(\widetilde{\Sigma}(E+N^{-1})-\widetilde{\Sigma}(E-N^{-1})\Big)+O_{\prec}(1)\,.
	\end{multline*}
From Theorem \ref{refthm1} we know
	\[
   \widetilde{\Sigma}(E+N^{-1})-\widetilde{\Sigma}(E-N^{-1})\leq\sum_{i} \frac{2}{N^2(\lambda_i-E)^2+1}=2\im \ul{G}(E+\ii N^{-1}) \prec 1\,,
	\]
	and thus
	\begin{equation} \label{3.1}
	\tr f(H)- \widetilde{\Sigma}(E) \prec 1\,.
	\end{equation}
	Note that \eqref{3.1} also implies
	\begin{equation} \label{3.2}
	\bb E\tr f(H)- \bb E\widetilde{\Sigma}(E)  \prec 1\,.
	\end{equation}
	By Cauchy interlacing theorem (e.g. \cite[Lemma 6.1]{EKYY2}) we have 
	\begin{equation} \label{eqn3.6}
	\widetilde{\Sigma}(E)-1 \leq \Sigma(E)\leq \widetilde{\Sigma}(E)\,.
	\end{equation} 
	By \eqref{3.1} -- \eqref{eqn3.6} we have
	\begin{equation} \label{3.33}
\tr f(H)-\bb E \tr f(H)-({\Sigma}(E)-\bb E {\Sigma}(E)) \prec 1\,.
	\end{equation}
Combining \eqref{3.14159} and \eqref{3.33} finished the proof.
\end{proof}

\subsection{Proof of Theorem \ref{mainthm}}
We shall prove the following result, and Theorem \ref{mainthm} then follows by Lemma \ref{prop4.4} (iv).
\begin{proposition}
	Fix $\tau>0$. We denote the eigenvalues of $A$ by $\lambda_1 \leq \lambda_2 \leq \dots \leq \lambda_N$. For all $i \in [\tau N,(1-\tau)N]$, we have
	\begin{equation*} 
	\lambda_i-\bb E \lambda_i -\frac{\gamma_i}{2} (\ul{H^2}-1)  \prec  N^{-1/2-\beta-\zeta/16}\,.
	\end{equation*}
\end{proposition}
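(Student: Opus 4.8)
The plan is to convert the eigenvalue fluctuation into a statement about the counting function, which has already been handled in Proposition \ref{prop3.2}. Fix $i \in [\tau N, (1-\tau)N]$ and let $\widetilde\gamma_i$ denote the $i$th $N$-quantile of the semicircle law for $A$ (so $\widetilde\gamma_i = \gamma_i + O(N^{-1})$ by rigidity, hence $\gamma_i$ and $\widetilde\gamma_i$ are interchangeable throughout). The key identity is the elementary equivalence, valid for any $E$ in the bulk, that $\lambda_i \leq E$ if and only if $\Sigma(E) \geq i$; equivalently, $\{\lambda_i \le E\} = \{\Sigma(E) \ge i\}$. Writing $E = \gamma_i + t$ for a small displacement $t$, the idea is to Taylor-expand: since the semicircle density $\varrho$ is bounded above and below on $[-2+\tau/2, 2-\tau/2]$, and $\bb E\Sigma(E) = N\varrho([-2,E]) + o(N\varrho)$-type deterministic estimate holds (again by rigidity / Lemma \ref{cor2}), we have $\bb E\Sigma(\gamma_i + t) - i \approx N\varrho(\gamma_i)\,t$ for $t$ in the relevant range. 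Thus the event $\{\lambda_i \le \gamma_i + t\}$ is, up to negligible error, the event $\{\Sigma(\gamma_i+t) - \bb E\Sigma(\gamma_i+t) \ge i - \bb E\Sigma(\gamma_i+t)\} \approx \{\Sigma(\gamma_i+t) - \bb E\Sigma(\gamma_i+t) \gtrsim -N\varrho(\gamma_i) t\}$.

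Next I would feed in Proposition \ref{prop3.2}, which gives, uniformly for $E$ in the bulk,
\[
\Sigma(E) - \bb E\Sigma(E) = \frac{E\sqrt{4-E^2}}{4\pi}(\ul{H^2}-1)N + O_\prec(N^{1/2-\beta-\zeta/16})\,.
\]
Since $\ul{H^2}-1 \prec N^{-1/2-\beta}$ by \eqref{bound for H^2}, the main term here is of order $N^{1/2-\beta}$, which dominates the error $N^{1/2-\beta-\zeta/16}$. Plugging $E = \gamma_i + t$ and using that $E\sqrt{4-E^2}/(4\pi)$ is Lipschitz in the bulk while $4\pi\varrho(\gamma_i) = 2\sqrt{4-\gamma_i^2}$, one finds $\Sigma(\gamma_i+t) - \bb E\Sigma(\gamma_i+t) = \frac{\gamma_i\sqrt{4-\gamma_i^2}}{4\pi}(\ul{H^2}-1)N + O_\prec(N^{1/2-\beta-\zeta/16} + N|t|\cdot N^{-1/2-\beta})$, the last error coming from the $t$-variation of the coefficient. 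Combining with the displayed linearization $\bb E\Sigma(\gamma_i+t) - i \approx N\varrho(\gamma_i) t$, the event $\{\lambda_i \le \gamma_i + t\}$ becomes (up to $O_\prec$ errors)
\[
\Big\{ \frac{\gamma_i\sqrt{4-\gamma_i^2}}{4\pi}(\ul{H^2}-1)N + N\varrho(\gamma_i) t \gtrsim 0 \Big\} = \Big\{ t \gtrsim -\frac{\gamma_i}{2}(\ul{H^2}-1)\Big\}\,,
\]
using $\sqrt{4-\gamma_i^2}/(4\pi) = \varrho(\gamma_i)/2$. This says precisely that $\lambda_i - \gamma_i$ equals $\frac{\gamma_i}{2}(\ul{H^2}-1)$ up to the controlled error, and subtracting the expectation (or comparing the analogous deterministic statement for $\bb E\lambda_i$) gives the claim.

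To make this rigorous rather than heuristic I would argue by a two-sided inclusion: for the upper bound on $\lambda_i - \bb E\lambda_i$, set $t_+ = \frac{\gamma_i}{2}(\ul{H^2}-1) + C N^{\epsilon} N^{-1/2-\beta-\zeta/16}$ (with the shift by $\bb E\lambda_i$ absorbed appropriately) and show $\Sigma(\gamma_i + t_+) \ge i$ holds with overwhelming probability, via the estimates above; for the lower bound, take $t_-$ with the opposite sign of the error term and show $\Sigma(\gamma_i + t_-) < i$. A subtlety to watch is that $t$ ranges over roughly $[-N^{\epsilon-\beta}, N^{\epsilon-\beta}]$, which is well inside the window where Proposition \ref{prop3.2} applies (since $\beta < 1/2$ keeps $\gamma_i + t$ at distance $\gg \tau$ from $\pm 2$ and the endpoint is away from $0$ only matters for Theorem \ref{cor1.4}, not here — for eigenvalue indices $i \in [\tau N,(1-\tau)N]$ the quantile $\gamma_i$ can be near $0$, but Proposition \ref{prop3.2} is stated for all $E \in [-2+\tau, 2-\tau]$ including near $0$, so there is no issue). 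One must also convert between $\Sigma$, the counting function of $A$, and the semicircle quantiles carefully, but Lemma \ref{cor2} plus Cauchy interlacing (as already used in the proof of Proposition \ref{prop3.2}) handle the deterministic bookkeeping. I expect the main obstacle to be bookkeeping the $t$-dependent error terms and verifying that the linearization $\bb E\Sigma(\gamma_i+t) - i = N\varrho(\gamma_i)t(1+o(1))$ is uniform and strong enough — specifically that the second-order term $O(N t^2)$ and the fluctuation of $\bb E\Sigma$ around its semicircle prediction are both $\ll N^{1/2-\beta-\zeta/16}$ throughout the relevant $t$-window; this is where the precise exponent $\zeta/16$ and the constraint $i \in [\tau N, (1-\tau)N]$ get used.
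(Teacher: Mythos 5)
Your strategy coincides with the paper's: pass to the counting function via $\{\lambda_i \le E\} = \{\Sigma(E)\ge i\}$, feed in Proposition \ref{prop3.2}, and run a two-sided inclusion with test points shifted by $\frac{\gamma_i}{2}(\ul{H^2}-1)\pm N^{\epsilon}N^{-1/2-\beta-\zeta/16}$. Two steps as written would fail and need the paper's repairs. First, the linearization $\bb E\Sigma(\gamma_i+t)-i\approx N\varrho(\gamma_i)\,t$ is not available: Lemma \ref{cor2} applied to the interval $(-\infty,\gamma_i]$ only gives $\bb E\Sigma(\gamma_i)=i+O_\prec(N/q)=i+O_\prec(N^{1-\beta})$, and this constant offset dwarfs both the linear term $N\varrho(\gamma_i)t\sim N^{1/2-\beta}$ on the relevant window and the target error $N^{1/2-\beta-\zeta/16}$. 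The paper's fix is to center not at $\gamma_i$ but at a deterministic $\theta_i$ chosen so that $|\bb E\Sigma(\theta_i)-i|\le N^{1/2-\beta-\zeta/2}$ (such a $\theta_i$ exists because $a\mapsto\bb E\Sigma(a)$ has no jumps larger than $N^{\epsilon}$, again by Lemma \ref{cor2}); one then checks $|\theta_i-\gamma_i|\prec N^{-\zeta}$, which is small enough that $\gamma_i/2$ may be kept as the coefficient of $\ul{H^2}-1$, and the deterministic offset $\theta_i-\gamma_i$ cancels only at the very end when $\bb E\lambda_i$ is subtracted. Your parenthetical about "absorbing the shift by $\bb E\lambda_i$" is exactly this recentering, but it must be performed before the linearization, not after.

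Second, your test points are random, so Proposition \ref{prop3.2} (stated for fixed deterministic $E$) cannot be applied at $\gamma_i+t_\pm$ directly. The paper separates the randomness: it writes $\Sigma\big(\omega_i+(\ul{H^2}-1)\gamma_i/2\big)-\Sigma(\omega_i)=N\varrho\big([\omega_i,\omega_i+(\ul{H^2}-1)\gamma_i/2]\big)+O_\prec(N^{1/2-\beta-\zeta})$, which is legitimate because Lemma \ref{cor2} holds uniformly over all intervals of length $\prec N^{-1/2-\beta}$ and hence applies to the random interval, and only then invokes Proposition \ref{prop3.2} at the deterministic point $\omega_i=\theta_i-N^{-1/2-\beta-\zeta/16+\epsilon}$. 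With these two adjustments your argument becomes the paper's proof.
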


\begin{proof}
Let $\widetilde{\mu}$ be the empirical eigenvalue density of $A$. Let us define the function $g: \bb R \to \bb R$ by
\[
g(a)\deq \bb E \int_{-\infty}^{a} \widetilde{\mu}(x) \,\dd x\,.
\] 
We claim that for any fixed (small) $\epsilon>0$, $g$ has no jumps of size larger than $N^{-1+\epsilon}$. In fact, by Lemma \ref{cor2} we have
\[
g(a+N^{-1+\epsilon/2})-g(a)=\bb E\widetilde{\mu}((a,a+N^{-1+\epsilon/2}])=\varrho((a,a+N^{-1+\epsilon/2}])+O_{\prec}(N^{-1})\leq CN^{-1+\epsilon/2}\,.
\]
Pick $i \in [\tau N,(1-\tau)N]$. We can then choose deterministic $\theta_i \in \bb R$ satisfying
\[
\bigg|\bb E \int_{-\infty}^{\theta_i} \widetilde{\mu}(x) \,\dd x-\frac{i}{N}\bigg|\leq N^{-1/2-\beta-\zeta/2}\,, 
\] 
so that $|\bb E \Sigma(\theta_i)-i|\leq N^{1/2-\beta-\zeta/2}$. Fix $\varepsilon\in (0,\zeta/16)$. Let us abbreviate $\omega_i=\theta_i-N^{-1/2-\beta-\zeta/16+\epsilon}$. We have
\begin{multline} \label{3.4}
\bb P \Big(\lambda_i-\theta_i-(\ul{H^2}-1)\gamma_i/2\leq -N^{-1/2-\beta-\zeta/16+\epsilon}\Big)=\bb P\Big(\Sigma\big(\omega_i+(\ul{H^2}-1)\gamma_i/2\big) \geq i \Big)\\
	=\bb P\Big(\Sigma\big(\omega_i+(\ul{H^2}-1)\gamma_i/2\big)-\bb E\Sigma\big(\omega_i\big)  \geq \bb E \Sigma(\theta_i)-\bb E\Sigma(\omega_i)+O(N^{1/2-\beta-\zeta/2})\Big)
\end{multline}
By Lemma \ref{cor2} we know that
\begin{equation} \label{3.200}
\widetilde{\mu}(I)-\varrho(I) \prec N^{-1/2-\beta-\zeta}
\end{equation}
for any $I$ satisfying $|I| \prec N^{-1/2-\beta}$. Together with \eqref{bound for H^2} we have
\begin{multline} \label{3.20}
\Sigma\big(\omega_i+(\ul{H^2}-1)\gamma_i/2\big)-\Sigma(\omega_i)=N\varrho([\omega_i,\omega_i+(\ul{H^2}-1)\gamma_i/2])+O_{\prec}(N^{1/2-\beta-\zeta})\\
=\frac{\sqrt{4-\omega_i^2}}{4\pi}N(\ul{H^2}-1)\gamma_i+O_{\prec}(N^{1/2-\beta-\zeta})
=\frac{\sqrt{4-\omega_i^2}}{4\pi}N(\ul{H^2}-1)\omega_i+O_{\prec}(N^{1/2-\beta-\zeta})\,.
\end{multline}
In the last step of \eqref{3.20} we used $|\theta_i -\gamma_i| \prec N^{-\zeta}$, which also can be deduced from Lemma \ref{cor2}. By \eqref{3.200},
\begin{equation} \label{not imop}
\bb E \Sigma(\theta_i)-\bb E\Sigma(\omega_i)=N\varrho([\omega_i,\theta_i])+O_{\prec}(N^{1/2-\beta-\zeta})=\frac{\sqrt{4-\omega_i^2}}{2\pi}N^{1/2-\beta-\zeta/16+\epsilon}+O_{\prec}(N^{1/2-\beta-\zeta})\,.
\end{equation}
A combination of \eqref{3.4}, \eqref{3.20}, and \eqref{not imop} shows that
\begin{multline*} 
\bb P \Big(\lambda_i-\theta_i-(\ul{H^2}-1)\gamma_i/2\leq -N^{-1/2-\beta-\zeta/16+\epsilon}\Big)\\
= \bb P\bigg(\Sigma\big(\omega_i\big)-\bb E\Sigma\big(\omega_i\big) -\frac{\sqrt{4-\omega_i^2}}{4\pi}N(\ul{H^2}-1)\omega_i \geq \frac{\sqrt{4-\omega_i^2}}{2\pi}N^{1/2-\beta-\zeta/16+\epsilon}+O_{\prec}(N^{1/2-\beta-\zeta/2})\bigg)\,.
\end{multline*}
Since $\epsilon$ is arbitrary, by Proposition \ref{prop4.4} we see that
\begin{equation*}
\big(\lambda_i-\theta_i-(\ul{H^2}-1)\gamma_i/2 \big)_- \prec N^{-1/2-\beta-\zeta/16}\,.
\end{equation*}
Repeating the above process for
\[
\bb P \Big(\lambda_i-\theta_i-(\ul{H^2}-1)\gamma_i/2\geq N^{-1/2-\beta-\zeta/16+\epsilon}\Big)
\]
we can also show that
\begin{equation*}
\big(\lambda_i-\theta_i-(\ul{H^2}-1)\gamma_i /2\big)_+ \prec N^{-1/2-\beta-\zeta/16}\,.
\end{equation*}
Thus 
\begin{equation} \label{123}
\lambda_i-\theta_i-(\ul{H^2}-1)\gamma_i/2  \prec N^{-1/2-\beta-\zeta/16}\,,
\end{equation}
which also implies 
\begin{equation}\label{456}
\bb E\lambda_i-\theta_i \prec N^{-1/2-\beta-\zeta/16}\,.
\end{equation}
The proof then follows from \eqref{123} and \eqref{456}.
\end{proof}

\section{Proof of Proposition \ref{propgreen}} \label{section4}
In this section we prove \eqref{result}; the proof of \eqref{resultt} is similar, and we omit the details. Throughout this section let us pick $n \in \bb N_+$ and
\begin{equation}\label{z}
  z=E+\ii \eta \in {\bf D}_{\tau} \,.
  \end{equation}
Let us define
$$
\cal M\deq \|[G]\|_{2n}=\Big(\bE \big|[G]\big|^{2n}\Big)^{\frac{1}{2n}}\,,
$$ 
and	we split
	\begin{equation*}
	\cal M^{2n} =\bE [G^*]^n[G]^{n-1} \langle \ul{G}\rangle-\bE [G^*]^n[G]^{n-1} (\ul{H^2}-1) mm'\,.
	\end{equation*}
The proof of \eqref{result} is immediate from the next lemma.
  \begin{lemma} \label{lemmakey}
	We have 
    \begin{equation} \label{est1}
		\bE [G^*]^n[G]^{n-1} \langle \ul{G}\rangle =\frac{n}{2N^2\eta^2}\bb E|[G]|^{2n-2}+O_{\prec} \big(N^{-\delta}\big)\cal M^{2n}+\sum_{r=1}^{2n}O_{\prec}\bigg(\frac{N^{-\xi}}{(N\eta)^{r}}+\bigg(\frac{N^{-\delta/4}}{\sqrt{N}q}\bigg)^{r}\bigg)\cal M^{2n-r}\,,
		\end{equation}
and
   \begin{equation} \label{est2}
		\bE [G^*]^n[G]^{n-1} (\ul{H^2}-1) mm'=\sum_{r=1}^{2n}O_{\prec}\bigg(\frac{N^{-\xi}}{(N\eta)^{r}}+\bigg(\frac{N^{-\delta/4}}{\sqrt{N}q}\bigg)^{r}\bigg)\cal M^{2n-r}\,.
	\end{equation}
	\end{lemma}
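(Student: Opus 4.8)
The plan is to expand $\bE [G^*]^n[G]^{n-1}\langle\ul G\rangle$ via the cumulant expansion (Lemma \ref{lem:cumulant_expansion}) applied in each entry $H_{ij}$. Writing $\langle\ul G\rangle = \tfrac1N\sum_i \langle G_{ii}\rangle$ and using the identity $zG = HG - I$, i.e.\ $\ul G = \tfrac1N\tr(HG)/z - \cdots$ — more precisely, starting from $\langle \ul G\rangle$ one uses $H_{ij}\partial_{H_{ij}}$ to generate the expansion of $\bE\big[\tfrac1N\sum_{ij}H_{ij}G_{ji}\cdot [G^*]^n[G]^{n-1}\big]$. The first-order ($k=1$) cumulant term gives the self-consistent equation contribution, which after using $\bE H_{ij}^2 = (1+O(\delta_{ij}))/N$ and the differential rule \eqref{diff} reproduces (via the approximate equation $m+zm+m^2\approx 0$, Lemma \ref{lem:m}) the main term together with the ``derivative hitting $[G]$'' terms; the latter, upon differentiating $[G]^{n-1}$ and $[G^*]^n$, produce the factor $\tfrac{n}{2N^2\eta^2}\bE|[G]|^{2n-2}$ after invoking the Ward identity (Lemma \ref{Ward}) to evaluate $\tfrac1N\sum_{ij}|G_{ij}|^2 = \tfrac1N\sum_i \im G_{ii}/\eta \approx \im m/\eta$ and then $\tfrac1N\sum_i (G^2)_{ii}\cdot(\cdots)$ — the combinatorial count of which $[G]$ or $[G^*]$ factor is differentiated yields the coefficient $n$ (from the $n-1$ copies of $[G]$ giving $(n-1)$ and the $n$ copies of $[G^*]$, with the $mm'$ normalization; the bookkeeping must match the claimed $n/(2N^2\eta^2)$). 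The $\ul{H^2}-1$ subtraction inside $[G]$ is precisely designed so that the ``bad'' second-order ($k=2$) cumulant term — which is of order $1/q$ and would otherwise dominate — is cancelled, using \eqref{diffH}; this is why \eqref{est2} has no main term.

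The higher-order cumulant terms ($k\geq 2$, and the $k=1$ terms where the derivative hits a resolvent entry other than in the self-consistent structure) are estimated term by term. Each such term has the schematic form of a product of traces $\ul{G^aG^{*b}}$ and off-diagonal entries $(G^aG^{*b})_{ij}$, multiplied by the remaining $[G^*]^n[G]^{n-1}$ with some factors differentiated away, and weighted by a cumulant $\cal C_{k+1}(H_{ij}) = O(1/(Nq^{k-1}))$ (Lemma \ref{Tlemh}) summed over $i,j$. The key estimates are Lemma \ref{prop4.4}: \eqref{Tk} controls $\langle\ul{G^mG^{*n}}\rangle \prec \eta^{1-(m+n)}(1/q + 1/(N\eta))$, \eqref{410} controls off-diagonal entries, and part (ii) controls the $\ell^2$-sums that arise when two off-diagonal factors share a summation index; one also needs part (iii) for $\bE\ul{G^k}$, $k=2,3$, because the naive bound on $\bE\ul{G^2}$, $\bE\ul{G^3}$ loses a factor. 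One must carefully track how many surviving $[G]$/$[G^*]$ factors remain — say $2n-r$ of them — so the term is bounded by $(\text{stuff})\cdot\cal M^{2n-r}$, and then check that $(\text{stuff})$ is $O_\prec\big(N^{-\xi}(N\eta)^{-r} + (N^{-\delta/4}/(\sqrt N q))^r\big)$; the two terms in the error correspond respectively to the ``$1/(N\eta)$-type'' gains (from $\im m/(N\eta)$ in Ward / local law) and the ``$1/q$-type'' gains (from cumulants and $\ul{H^2}-1$). The $O_\prec(N^{-\delta})\cal M^{2n}$ term collects the borderline contributions where no genuine gain is available but one still wins a small power $N^{-\delta}$ from the definition \eqref{gamma'}–\eqref{chi} of $\delta,\xi$ relative to the spectral parameter.

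The main obstacle — and the reason the proof is deferred to Section \ref{section4} — is that when $q = N^\epsilon$ with $\epsilon$ small, a bounded number of cumulant expansions does \emph{not} suffice: each additional expansion only gains a factor $1/q^2$, so to reach the optimal error $1/(\sqrt N q)$ (rather than the easy $1/q^2$) one must iterate the expansion an unbounded number of times. This forces the introduction of a general class of polynomials in the Green function entries and a hierarchy of Schwinger–Dyson (self-consistent) equations for them, stable under further expansion, following the strategy of \cite{HK2}; controlling this recursion — in particular showing the recursion closes and the errors telescope to the stated bound — is the technical heart of the argument. Granting Lemma \ref{lemmakey}, \eqref{result} follows immediately: substituting \eqref{est1} and \eqref{est2} into $\cal M^{2n} = \bE[G^*]^n[G]^{n-1}\langle\ul G\rangle - \bE[G^*]^n[G]^{n-1}(\ul{H^2}-1)mm'$ gives a self-improving inequality $\cal M^{2n} \leq \tfrac{n}{2N^2\eta^2}\bE|[G]|^{2n-2} + O_\prec(N^{-\delta})\cal M^{2n} + \sum_r O_\prec(\cdots)\cal M^{2n-r}$, which one absorbs the $O_\prec(N^{-\delta})\cal M^{2n}$ term on the left, solves the polynomial (Young's) inequality in $\cal M$, and finally inducts on $n$ to identify the constant $n!/2^n$ in front of $(N\eta)^{-2n}$.
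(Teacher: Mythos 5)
Your proposal follows the same route as the paper: start from the resolvent identity $zG=GH-I$, apply the cumulant expansion entrywise, read off the main term $\frac{n}{2N^2\eta^2}\bb E|[G]|^{2n-2}$ from the first-cumulant contribution in which the derivative hits one of the $n$ factors $[G^*]$ (the paper evaluates this via $\bb E\,\ul{GG^{*2}}=-(m-\bar m)/(4\eta^2)+\cdots$, which is the resolvent-identity form of your Ward-identity step; note that only the $[G^*]$ factors contribute to the main term, since differentiating the $[G]^{n-1}$ factors produces $\ul{G^3}$, an error term by Lemma \ref{prop4.4}(iii)), and control the remaining cumulant orders through a recursive hierarchy of Schwinger--Dyson equations for an abstract class of Green-function polynomials, exactly as in Sections \ref{sec4.2}--\ref{sec4.5}. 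Your derivation of \eqref{result} from the lemma also matches the paper.

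There is, however, one step in your sketch that would fail as written: you place the crucial cancellation produced by the shift $(\ul{H^2}-1)mm'$ at the third-cumulant level (your ``$k=2$ cumulant term, of order $1/q$''). In the paper the third-cumulant term $L_2$ needs no such cancellation: its worst contribution carries an off-diagonal factor $G_{ij}$, and summation over the free index together with the refined bounds of Lemmas \ref{lem4.3} and \ref{lem4.5} already beats the naive $1/q$ (Lemma \ref{lem4.6}). The term that genuinely requires the cancellation is the \emph{fourth}-cumulant term $L_3$, Step 2 of Section \ref{sec4.4}: there the derivative structure produces $\frac{1}{N^3q^2}\sum_{i,j}(G^{*2})_{ii}G^*_{jj}G_{ii}G_{jj}\asymp 1/(Nq^2\eta)$, with no off-diagonal entry available to gain from, and this exceeds the allowed $N^{-\delta/2}/(Nq^2)$ by a factor of order $1/\eta$. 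It is cancelled against the term $-\tfrac{4}{N}\bar m\bar m'$ coming from the second derivative of $(\ul{H^2}-1)\bar m\bar m'$ paired with $\cal C_4(H_{ij})$, and verifying the cancellation itself requires a further Schwinger--Dyson expansion identifying the deterministic leading part of $(G^{*2})_{ii}G^{*}_{jj}$ as exactly $\bar m\bar m'$ (the chain \eqref{4.49}--\eqref{4,43}). If one tries instead to cancel the third-cumulant term, the $L_3$ obstruction survives and \eqref{est1} does not close. Relatedly, \eqref{est2} is not a by-product of this cancellation: it is proved separately in Section \ref{sec4.6} by expanding $\frac1N\sum_{i,j}\bb E H_{ij}^2[G^*]^n[G]^{n-1}$, where the leading first-cumulant contribution cancels the subtracted $\bb E[G^*]^n[G]^{n-1}$. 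Beyond this, your sketch correctly identifies that the recursion of Lemmas \ref{lem4.3} and \ref{lem4.5} is the technical heart, but it does not set it up; as a proof of the lemma it therefore remains incomplete until that machinery is supplied.
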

In fact, Lemma \ref{lemmakey} shows
\begin{equation} \label{uzi}
\cal M^{2n}=\frac{n}{2N^2\eta^2}\bb E|[G]|^{2n-2}+O_{\prec} \big(N^{-\delta}\big)\cal M^{2n}+\sum_{r=1}^{2n}O_{\prec}\bigg(\frac{N^{-\xi}}{(N\eta)^{r}}+\bigg(\frac{N^{-\delta/4}}{\sqrt{N}q}\bigg)^{r}\bigg)\cal M^{2n-r}\,,
\end{equation}
and together with $\bb E |[G]|^{2n-2}\leq \cal M^{2n-2}$ we have
\[
\cal M^{2n} \prec \sum_{r=1}^{2n}\bigg(\frac{1}{(N\eta)^{r}}+\bigg(\frac{N^{-\delta/4}}{\sqrt{N}q}\bigg)^{r}\bigg)\cal M^{2n-r}\,,
\]
which implies
\[
\cal M^{2n} \prec \frac{1}{(N\eta)^{2n}}+\bigg(\frac{N^{-\delta/4}}{\sqrt{N}q}\bigg)^{2n}\,.
\]
Since $n$ is arbitrary, we have
\begin{equation} \label{baqiang}
[G] \prec \frac{1}{N\eta}+\frac{N^{-\delta/4}}{\sqrt{N}q}\,.
\end{equation}
Inserting \eqref{baqiang} back into \eqref{uzi}, we have
\[
\bb E |[G]|^{2n}=\cal M^{2n}=\frac{n}{2N^2\eta^2}\bb E|[G]|^{2n-2}+O_{\prec}\bigg(\frac{N^{-\xi}}{(N\eta)^{2n}}+\bigg(\frac{N^{-\delta/4}}{\sqrt{N}q}\bigg)^{2n}\bigg)\,,
\]
and \eqref{result} follows by iteration.

In Sections \ref{section4.1}--\ref{sec4.5} we shall prove \eqref{est1}, and in Section \ref{sec4.6} we prove \eqref{est2}.

	\subsection{First estimates} \label{section4.1}
	 By the resolvent identity $zG=GH-I$ we have
	\begin{equation} \label{3.11}
	z\bE [G^*]^n[G]^{n-1} \langle \ul{G}\rangle = \bE \langle [G^*]^n [G]^{n-1} \rangle \underline{GH} = \frac{1}{N}\sum\limits_{i,j} \bE \langle[G^*]^n [G]^{n-1} \rangle G_{ij}H_{ji}\,.
	\end{equation}
	We calculate the RHS of \eqref{3.11} using the cumulant formula \eqref{eq:cumulant_expansion} with $f=f_{ij}(H)\deq \langle[G^*]^n [G]^{n-1} \rangle G_{ij}$ and $h=H_{ji}$, and get
	\begin{equation} \label{4.6}
	\begin{aligned} 
	z\bE [G^*]^n[G]^{n-1} \langle \ul{G}\rangle
	&=\frac{1}{N^2} \sum\limits_{i,j} \bE \langle [G^*]^n [G]^{n-1} \rangle \frac{\partial  G_{ij}}{\partial H_{ji}} (1+\delta_{ji})\\ 
	&\ \ +\frac{1}{N^2} \sum\limits_{i,j} \bE\frac{\partial (\langle [G^*]^n [G]^{n-1}\rangle)}{\partial H_{ji}}G_{ij}(1+\delta_{ji})+
	\bb EK + \sum_{k=2}^{l}\bb EL_k +\frac{1}{N}\sum_{i,j}\bb E\cal R^{(ji)}_{l+1}\\
	&\ \eqd  (a)+(b)+K+ \sum_{k=2}^l L_k+\frac{1}{N}\sum_{i,j}\bb E\cal R^{(ji)}_{l+1}\,,
	\end{aligned} 
	\end{equation}
	where
	\begin{equation} \label{3.13}
	 K=N^{-2} \sum\limits_{i}\frac{\partial (\langle[G^*]^n[G]^{n-1}  \rangle G_{ii})}{\partial H_{ii}}(N\cal C_2(H_{ii})-2)\,,
	\end{equation}
	and
	\begin{equation} \label{3.14} 
	L_k=N^{-1} \cdot \sum\limits_{i,j}\left(\frac{1}{k!}\mathcal{C}_{k+1}(H_{ji})\frac{\partial^k( {\langle [G^*]^n[G]^{n-1}  G_{ij}})}{\partial H_{ji}^k}\right).  
	\end{equation}
	Here $l$ is a fixed positive integer to be chosen later, and $\cal R_{l+1}^{(ji)}$ is a remainder term defined analogously to $\cal R_{l+1}$ in (\ref{eq:cumulant_expansion}). 
	 Using the differential rule \eqref{diff} we get
	\begin{equation*} 
	\begin{aligned}
	(a)&= N^{-2} \sum_{i,j} \bE \langle [G^*]^n[G]^{n-1}  \rangle (-G_{ij}G_{ij}-G_{ii}G_{jj}) \\
	&= -N^{-1}\bE [G^*]^n[G]^{n-1}  \langle\underline{G^2} \rangle - \bE [G^*]^n[G]^{n-1}  \langle \underline{G} \rangle^2 -2\bE [G^*]^n[G]^{n-1} \langle \underline{G} \rangle \bE \underline{G} + \bE [G^*]^n[G]^{n-1} \bE \langle \underline{G} \rangle^2\,.
	\end{aligned} 
	\end{equation*} 
	Similarly, 
	\begin{equation*} 
	(b)=-\frac{2}{N^2}\left(n\bE[G^*]^{n-1}[G]^{n-1} \big(\underline{GG^{*2}}+2\ul{HG^*}\bar{m}\bar{m}'\big)+ (n-1)\bE[G^*]^n[G]^{n-2} \big(\underline{G^3}+2\ul{HG}mm' \big)\right)\,.
	\end{equation*}
	Altogether we obtain
	\begin{equation}  \label{maingreen}
	\begin{aligned}
	\bE [G^*]^n[G]^{n-1} \langle \ul{G}\rangle = &\frac{1}{T} \Big(\bE [G^*]^n[G]^{n-1}  \langle \underline{G} \rangle^2-\bE [G^*]^n[G]^{n-1} \bE \langle \underline{G} \rangle^2+\frac{1}{N}\bE [G^*]^n[G]^{n-1}  \langle\underline{G^2} \rangle\\
	&\ \ +\frac{2n-2}{N^2}\bE[G^*]^n[G]^{n-2} \big(\underline{G^3}+2\ul{HG}\,mm' \big)-\bb EK-\sum_{k=2}^{l}\bb EL_{k}\\
	&\ \ +\frac{2n}{N^2}\bE[G^*]^{n-1}[G]^{n-1} \big(\underline{GG^{*2}}+2\ul{HG^*}\,\bar{m}\bar{m}'\big)-\frac{1}{N}\sum_{i,j}\bE \cal R^{(ji)}_{l+1}\Big)\,,
	\end{aligned}
	\end{equation}
	where $T\deq-z-2\bb E \ul{G}$. Note that by \eqref{mmm} and Theorem \ref{refthm1} we have
	\begin{equation} \label{bound for T}
	\frac{1}{T}=\frac{1}{-z-2m}+O_{\prec}\Big(\frac{1}{q}+\frac{1}{N\eta}\Big)=O_{\tau}(1)
	\end{equation}
	uniformly for all $z \in {\bf D}_{\tau}$. Let us look at the terms in \eqref{maingreen}. By \eqref{bound for H^2} we have
	\begin{equation} \label{4.11}
	\langle \ul{G} \rangle = [G]+O_{\prec}(N^{-1/2-\beta})\,.
	\end{equation}
	Together with Lemma \ref{prop4.4} (i) and H\"{o}lder's inequality we get
	\begin{equation} \label{4.9}
	\bE [G^*]^n[G]^{n-1}  \langle \underline{G} \rangle^2 \prec \Big(\frac{1}{q}+\frac{1}{N\eta}\Big)\, \cal M^{2n} + \Big(\frac{1}{q}+\frac{1}{N\eta}\Big) \frac{1}{\sqrt{N}q}\cal M^{2n-1}
	\end{equation}
	and
	\begin{equation}
    \bE [G^*]^n[G]^{n-1} \bE \langle \underline{G} \rangle^2 \prec \Big(\frac{1}{q}+\frac{1}{N\eta}\Big)\, \cal M^{2n} + \Big(\frac{1}{q}+\frac{1}{N\eta}\Big) \frac{1}{\sqrt{N}q}\cal M^{2n-1}  \,.
    \end{equation}
	Similarly, by Lemma \ref{prop4.4} (i), (iii) and H\"{o}lder's inequality we have
\begin{equation}
\frac{1}{N}\bE [G^*]^n[G]^{n-1}  \langle\underline{G^2} \rangle \prec  \Big(\frac{1}{q}+\frac{1}{N\eta}\Big) \frac{1}{N\eta}\, \cal M^{2n-1}\,.
\end{equation}
and
\begin{equation} \label{415}
\frac{2n-2}{N^2}\bE [G^*]^n[G]^{n-2}\underline{G^3} =\frac{2n-2}{N^2}\bE [G^*]^n[G]^{n-2}(\langle\underline{G^3}\rangle+\bb E \ul{G^3})\prec \Big(\frac{1}{N\eta}+\frac{1}{q}+\eta\Big)\Big(\frac{1}{N\eta}\Big)^2 \cal M^{2n-2}\,.
\end{equation}
Note that
\begin{equation} \label{416}
HG=I+zG \quad \mbox{and} \quad |z|\leq 6\,,
\end{equation} 
hence
\begin{equation} \label{4.17}
\frac{4n-4}{N^2}\bE [G^*]^n[G]^{n-2}\ul{HG}mm'=\frac{4n-4}{N^2}\bE [G^*]^n[G]^{n-2}(1+z\ul{G})mm' \prec \frac{1}{N^2}\cal M^{2n-2}\,.
\end{equation}
From resolvent identity, Theorem \ref{refthm1} and Lemma \ref{prop4.4} (iii) we have
\[
\bb E \ul{GG^{*2}}=\frac{\bb E\ul{G}-\bb E\ul{G^*}}{(2\ii \eta)^2}-\frac{\bb E \ul{G^{*2}}}{2\ii \eta}=-\frac{m-\bar{m}}{4\eta^2}+O_{\prec}\Big(\Big(\frac{1}{q}+\frac{1}{N\eta}+\eta\Big)\,\eta^{-2}\Big)\,.
\]
Thus
\begin{multline}
\frac{2n}{N^2}\bE[G^*]^{n-1}[G]^{n-1} \underline{GG^{*2}}=\frac{2n}{N^2}\bE[G^*]^{n-1}[G]^{n-1} (\langle\underline{GG^{*2}}\rangle+\bb E\underline{GG^{*2}})\\
=-\frac{n(m-m^*)}{2N^2\eta^2}\bb E |[G]|^{2n-2}+O_{\prec}\Big(\frac{1}{q}+\frac{1}{N\eta}+\eta\Big)\Big(\frac{1}{N\eta}\Big)^2 \cal M^{2n-2}\,.
\end{multline}
Similarly, by $N\cal C_2(H_{ii}) \asymp1$ and the differential rule \eqref{diff} one can easily check that
\begin{equation} \label{KKK}
\bE K \prec \frac{1}{N} \cal M^{2n-1}+ \frac{1}{N^2\eta} \cal M^{2n-2}\,.
\end{equation}
The estimate for the remainder term can be done routinely. One can follow, e.g.\,the proof of Lemma 3.4 (iii) in \cite{HKR}, and readily check that 
\begin{equation} \label{RRR}
\frac{1}{N}\sum_{i,j}\bE\cal R^{(ji)}_{l+1} \prec \frac{1}{N^{2n}}
\end{equation}
for $l$ large enough. From now on, we shall always assume the remainder term in cumulant expansion is negligible. Inserting the above estimates \eqref{bound for T}, \eqref{4.9} -- \eqref{415}, \eqref{4.17} -- \eqref{RRR} into \eqref{maingreen}, we have
\begin{equation} \label{4.16}
\begin{aligned}
\bE [G^*]^n[G]^{n-1} \langle \ul{G}\rangle =&\frac{n}{2N^2\eta^2}\bb E|[G]|^{2n-2}-\frac{1}{T}\sum_{k=2}^{l}\bE L_{k}\\&
+O_{\prec}(N^{-\delta})\cal M^{2n}+\sum_{r=1,2}O_{\prec}\Big(\frac{N^{-\xi}}{(N\eta)^r}+\Big(\frac{N^{-\delta}}{\sqrt{N}q}\Big)^r\Big)\cal M^{2n-r}\,,
\end{aligned}
\end{equation}
where we recall the definitions of $\delta,\xi$ from \eqref{gamma'},\eqref{chi}.
What is left, therefore, is the analysis of $\bE L_k$, $k \ge 2$.

\subsection{Abstract polynomials and  the recursive estimates} \label{sec4.2}
We now introduce some additional notations that will be used frequently in the analysis of $L_k$. To motivate them, we note that the proof relies on a calculus of products of expectations of random variables of the type $(G^{m})_{ij}$,  $(G^{*n})_{ij}$, $[{G}]$, $ [{G^*}]$ evaluated at $z \in \b D_{\tau}$, e.g.
\begin{equation*}
a(z,z^*) \, N^{3/2} \, \E [{G}] [{G^*}]\,, \qquad
a_{i_1 i_2 i_3 i_4}(z,z^*)\, N^{-1/2} \, \bb E (G^3)_{i_1i_2}G_{i_3i_4}\E [{G^*}]^2\,,
\end{equation*}
where $a(z, z^*)$ and $a_{i_1 i_2 i_3 i_4}(z,z^*)$ are uniformly bounded functions that may depend on $N$. It is convenient to classify such expressions depending on the exponent $\nu_1 \in \R$ and on the number $\nu_0$ of indices $i_k$. Below, we introduce the notations $\cal U^{(\nu_0,\nu_1)}(\cal Y)$, $\cal V^{(\nu_0,\nu_1)}(\cal Y)$ for the set of such expressions, where $\cal Y$ is the set of matrices appearing in them, in the above examples $\cal Y = \{G, G^*\}$.

To that end, we define a set of formal monomials in a set of formal variables. Here the word \textit{formal} refers to the fact that these definitions are purely algebraic and we do not assign any values to variables or monomials. The formal variables are constructed from a finite set of formal
matrices $\cal Y$ and the infinite set of formal indices $\{i_1,i_2,...\}$.

\begin{enumerate}
	\item[$\bullet$] For $\nu_0\in \bb N,\nu_1 \in \bb R$, denote by $\cal U^{(\nu_0,
		\nu_1)}(\cal Y)$ the set of monomials with coefficient $a_{i_1,...,i_{\nu_0}}N^{-\nu_1}$ in the variables $(Y^m)_{xy}$  and $[Y]$. Here $Y \in \cal Y$, $m\in \bb N_+$, $x,y \in \{i_1,...,i_{\nu_0}\}$, and $(a_{i_1,...,i_{\nu_0}})_{1\leq i_1,...,i_{\nu_0}\leq N}$ is some family of complex numbers that is uniformly bounded in $i_1,...,i_{\nu_0}$.
    \item[$\bullet$] Set $\cal U(\cal Y)=\bigcup_{\nu_0,\nu_1}\cal U^{(\nu_0,\nu_1)}(\cal Y)$.
\end{enumerate}
We also define the following subset of $\cal U(\cal Y)$.
\begin{enumerate}
	\item[$\bullet$] We denote by $\cal V^{(\nu_0,
		\nu_1)}(\cal Y)$ the subset of \,$ \cal U^{(\nu_0,
		\nu_1)}(\cal Y)$, where we further require $m \in \{1,2\}$ for all variables $(Y^m)_{xy}$.
	\item[$\bullet$] Set $\cal V(\cal Y)=\bigcup_{\nu_0,\nu_1}\cal V^{(\nu_0,\nu_1)}(\cal Y)\subset \cal U(\cal Y)$.
\end{enumerate}
Next, we define the following maps $\nu_0,\nu_1,\nu_2,\nu_3,\widetilde{\nu}_3,\nu_4$: $\cal U(\cal Y) \to \bb N$.
\begin{enumerate}
	\item For $U \in \cal U^{(\nu_0,\nu_1)}(\cal Y)$,  $(\nu_0(U),\nu_1(U))=(\nu_0,\nu_1)$.
	\item $\nu_2(U)=$ sum of $m-1$ of all $(Y^m)_{xy}$ in $U$ with $Y\in \cal Y$. 
	\item ${\nu}_3(U)=$ $2\wedge$(number of $(Y^m)_{xy}$ in $U$ with $x \ne y$ and $Y \in \cal Y$). Set $\widetilde{\nu}_3(U)=2-\nu_3(U)$.
	\item $\nu_4(U)=$ number of $ [Y]$ in $U$ with $Y \in \cal Y$.
\end{enumerate}

Next, we assign to each monomial $U \in  \cal U^{(\nu_0,
	\nu_1)}(\cal Y)$ a value $U_{i_1,...,i_{\nu_0}}$ as follows. Suppose that the set $\cal Y$ consists of $N \times N$ random matrices. Then for any $\nu_0$-tuple $(i_1,...,i_{\nu_0})\in \{1,2,...,N\}^{\nu_0}$ we
define the number $U_{i_1,...,i_{\nu_0}}$ as the one obtained by taking the formal expression $U$ and evaluating it with the laws of the matrices in $\cal Y$ and the numerical values of  $i_1,...,i_{\nu_0}$. In the following arguments, the set $\cal Y$ will consist of Green functions of $H$ for
the spectral parameter $z$ defined in \eqref{z}, and the indices $i_1,...,i_{\nu_0}$ will be summed over.

The next result is a straightforward consequence of Lemma \ref{prop4.4} (i), (iii) and H\"{o}lder's inequality whose proof we omit.
\begin{lemma} \label{lem4.2}
	Let $\cal Y=\{G,G^*\}$, and fix $U \in \cal U^{(\nu_0, \nu_1)}(\cal Y)$. Then
	\begin{equation} \label{417}
	\sum_{i_1,...,i_{\nu_0}}\bb E  U_{i_1,...,i_{\nu_0}} =O_{\prec}(N^{b_0(U)}) \cdot \cal M^{\nu_4(U)}\,,
	\end{equation}
	where $b_0(U)=\nu_0(U)-\nu_1(U)+\alpha\nu_2(U)+(\alpha/2-1/2){\nu}_3(U)$.
\end{lemma}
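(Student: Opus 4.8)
The plan is to first use H\"older's inequality to peel off the index-free factors $[Y]$, and then to bound the remaining sum of Green function entries by a graphical index-counting argument built on the pointwise and Ward-type estimates of Lemma~\ref{prop4.4} and Lemma~\ref{Ward}. Write a generic $U\in\cal U^{(\nu_0,\nu_1)}(\cal Y)$ as
\[
U_{i_1,\dots,i_{\nu_0}}=a_{i_1,\dots,i_{\nu_0}}\,N^{-\nu_1}\Bigl(\prod_{j=1}^{\nu_4(U)}[Y_j]\Bigr)\Bigl(\prod_{k=1}^{p}(Y_k^{m_k})_{x_ky_k}\Bigr),
\]
with $Y_j,Y_k\in\{G,G^*\}$, $x_k,y_k\in\{i_1,\dots,i_{\nu_0}\}$, $|a_{i_1,\dots,i_{\nu_0}}|\le C$, $\nu_2(U)=\sum_k(m_k-1)$ and $\nu_3(U)=\min\{2,\#\{k:x_k\ne y_k\}\}$. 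The $[Y_j]$ carry no summed indices and $\|[G^*]\|_{2n}=\|[G]\|_{2n}=\cal M$ since $[G^*]=\overline{[G]}$; hence H\"older's inequality with exponents $2n$ ($\nu_4(U)$ times, assuming $\nu_4(U)\le 2n-1$, which is the case in all our applications) and $2n/(2n-\nu_4(U))$ gives
\[
\Bigl|\sum_{i_1,\dots,i_{\nu_0}}\bE\,U_{i_1,\dots,i_{\nu_0}}\Bigr|
\le \cal M^{\nu_4(U)}\,\Bigl\|\,N^{-\nu_1}\sum_{i_1,\dots,i_{\nu_0}}a_{i_1,\dots,i_{\nu_0}}\prod_{k=1}^{p}(Y_k^{m_k})_{x_ky_k}\Bigr\|_{2n/(2n-\nu_4(U))}.
\]
Since the matrix-entry part is bounded deterministically by $N^{O(1)}$ on $\b D_\tau$ (as $\|G\|\le\eta^{-1}\le N$), the routine fact that stochastic domination plus a deterministic polynomial bound yields the corresponding moment bound reduces the lemma to the pathwise estimate
\[
\sum_{i_1,\dots,i_{\nu_0}}\prod_{k=1}^{p}\bigl|(Y_k^{m_k})_{x_ky_k}\bigr|\ \prec\ N^{\,\nu_0+\alpha\nu_2(U)+(\alpha/2-1/2)\nu_3(U)},
\]
which then only has to be multiplied by $N^{-\nu_1}$.

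To prove this, view the $p$ factors $(Y_k^{m_k})_{x_ky_k}$ as edges of a multigraph on the vertices $\{i_1,\dots,i_{\nu_0}\}$, diagonal factors being loops, and carry out the $\nu_0$ summations one index at a time. Two ingredients are used: the pointwise bound $(G^m)_{xy}\prec\eta^{1-m}=N^{\alpha(m-1)}$, valid for all $x,y$ on $\b D_\tau$ by \eqref{410} because the off-diagonal factor $q^{-1}+(N\eta)^{-1/2}$ in \eqref{410} is $O(1)$ there; and the Ward-type bound $\sum_x|(G^m)_{xy}|^2\prec\eta^{1-2m}$ of Lemma~\ref{prop4.4}(ii), of which Lemma~\ref{Ward} is the case $m=1$. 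Estimating every diagonal factor and every not-yet-used off-diagonal factor by the pointwise bound contributes $\prod_k\eta^{1-m_k}=\eta^{-\nu_2(U)}=N^{\alpha\nu_2(U)}$, and summing each index trivially over $\{1,\dots,N\}$ contributes $N^{\nu_0}$. The refinement measured by $\nu_3(U)$ comes from the off-diagonal factors: when one summed index is an endpoint of an off-diagonal factor $(G^m)$, applying Cauchy--Schwarz in that index and then the Ward-type bound bounds the joint contribution of this factor and its summed endpoint by $\eta^{1-m}\sqrt{N/\eta}=\eta^{1-m}N^{(1+\alpha)/2}$ instead of the trivial $\eta^{1-m}N$ (equivalently, an index shared by exactly two off-diagonal factors can be summed off directly through $\sum_i(G^a)_{xi}(G^b)_{iy}=(G^{a+b})_{xy}$ or via Ward, giving $\eta^{-1}$ in place of $N$), a net gain of $N^{(\alpha-1)/2}$ per off-diagonal factor. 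Collecting this gain once for each of the $\nu_3(U)\le 2$ off-diagonal factors, and treating all other factors and indices trivially, produces the factor $N^{(\alpha/2-1/2)\nu_3(U)}$, hence the exponent $b_0(U)$.

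The only genuine work is the combinatorial bookkeeping of the last step: one must check that, whatever the incidence of the summed indices with the factors (arbitrary vertex degrees, loops, multi-edges, several connected components), the order of summation can be chosen so that the $\eta$-powers assemble exactly to $\eta^{-\nu_2(U)}$ and the gain $N^{(\alpha-1)/2}$ is harvested $\nu_3(U)$ times without ever reusing an index. No further analytic input is required beyond the two estimates above --- for the occasional trace-type subexpression $\sum_i(G^k)_{ii}=N\,\ul{G^k}$ one may invoke the sharper Lemma~\ref{prop4.4}(iii), but the crude bound $N\eta^{1-k}$ from the pointwise estimate already suffices for the exponent claimed here --- and none of the sharp constants or cancellations needed elsewhere in Section~\ref{section4} enter this lemma.
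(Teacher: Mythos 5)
Your proof is correct, and it is in substance the argument the paper has in mind: the paper omits the proof of Lemma \ref{lem4.2} entirely, describing it as a consequence of Lemma \ref{prop4.4} and H\"older's inequality, and your structure (H\"older with exponents $2n$ to peel off the index-free factors $[Y]$, then a pathwise index-counting bound on the remaining product of Green function entries) is the natural implementation. One point in your favour worth recording: the paper attributes the lemma to parts (i) and (iii) of Lemma \ref{prop4.4}, but the pointwise off-diagonal bound \eqref{410} gives only $\eta^{1-m}(q^{-1}+(N\eta)^{-1/2})$, and $q^{-1}$ can dominate $(N\eta)^{-1/2}$ when $q\ll\sqrt{N\eta}$; so the gain $N^{(\alpha-1)/2}=(N\eta)^{-1/2}$ per off-diagonal factor encoded in $\nu_3$ genuinely requires the Ward-type input (Lemma \ref{prop4.4}(ii) or Lemma \ref{Ward}) via Cauchy--Schwarz in a summed index, exactly as you use it. Two minor caveats: the H\"older step as written needs $\nu_4(U)\le 2n-1$ (for $\nu_4(U)=2n$ one must instead restrict to the high-probability event where the deterministic part is bounded, a routine modification), and the final combinatorial case check is only sketched --- but since $\nu_3\le 2$ the configurations to verify (two off-diagonal factors sharing zero, one, or both endpoints) form a short finite list, each of which closes as you indicate.
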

Our first estimate is the following improved bound for the LHS of \eqref{417}, whose proof is postponed to Section \ref{sec5.1}. The necessity of this result is explained in Remark \ref{rmk4.7} below.
\begin{lemma} \label{lem4.3}
	Let us adopt the assumptions in Lemma \ref{lem4.2}. Let $V \in \cal V^{(\nu_0, \nu_1)}(\cal Y)$ satisfying $\nu_2(V)\geq 1$, we have
	\begin{equation*} 
	\sum_{i_1,...,i_{\nu_0}} \bb E  V_{i_1,...,i_{\nu_0}} \prec\cal B(V)\,,
	\end{equation*}
	where 
	\begin{equation*}
	\cal B(V)=N^{b_0(V)}  \cal M^{\nu_4(V)+1}+\sum_{k=0}^{\nu_4(V)}N^{b_0(V)}\frac{1}{(N\eta)^k} \Big(\eta+\frac{1}{(N\eta)^{\widetilde{\nu}_3(V)/2}}\Big)\cdot \cal M^{\nu_4(V)-k}\,.
	\end{equation*}
\end{lemma}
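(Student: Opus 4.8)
The plan is to exploit the factor $(G^{2})_{xy}$ or $(G^{*2})_{xy}$ whose presence is guaranteed by $\nu_2(V)\geq 1$. In Lemma \ref{lem4.2} each such factor is only used through the crude entrywise bound $\eta^{-1}$ of Lemma \ref{prop4.4}; the point of Lemma \ref{lem4.3} is that $G^{2}=\partial_z G$ is in fact close to the bounded deterministic matrix $m'(z)\,I$ with a small fluctuating part, and that this is the source of the gains $\eta+(N\eta)^{-\widetilde\nu_3(V)/2}$ and of the extra power of $\cal M$ in $\cal B(V)$. By the conjugation symmetry $G\leftrightarrow G^{*}$ (equivalently $z\leftrightarrow\bar z$), which maps $\cal V^{(\nu_0,\nu_1)}(\{G,G^{*}\})$ into itself, fixes $\cal M$, and preserves all the exponents $\nu_0,\dots,\nu_4,\widetilde\nu_3$ and hence $b_0$, I may assume the distinguished factor is $(G^{2})_{ab}$ with $a,b\in\{i_1,\dots,i_{\nu_0}\}$. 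Write $V_{\vec\imath}=N^{-\nu_1}c_{\vec\imath}(G^{2})_{ab}W_{\vec\imath}$, where $W\in\cal V(\{G,G^{*}\})$ is the monomial obtained by deleting this factor; then $\nu_1(W)=\nu_1$, $\nu_4(W)=\nu_4(V)$, $\nu_2(W)=\nu_2(V)-1$, and $\nu_3(W),\widetilde\nu_3(W)$ change by at most one. I then split $(G^{2})_{ab}=m'(z)\,\delta_{ab}+\bigl[(G^{2})_{ab}-m'(z)\delta_{ab}\bigr]$ and treat the two sums separately.

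\emph{Principal part.} The term carrying $m'(z)\delta_{ab}$ forces $a=b$; identifying the two indices turns it into $\sum\E$ of a monomial in $\cal V$ with one fewer $G^{2}$-factor, to which Lemma \ref{lem4.2} applies directly. Since the distinguished factor contributed $N^{\alpha}=\eta^{-1}$ to $b_0$ (and, when $a\neq b$ originally, also $N^{\alpha/2-1/2}$ plus a factor $N^{-1}$ from the collapsed index), whereas $m'=O_\tau(1)$ by Lemma \ref{lem:m}, this contributes at most $N^{b_0(V)}\eta\,\cal M^{\nu_4(V)}$ when $a=b$, and a strictly smaller quantity when $a\neq b$; both are absorbed into the $k=0$ term of $\cal B(V)$.

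\emph{Remainder.} For $(G^{2})_{ab}-m'\delta_{ab}=\partial_z\bigl(G_{ab}-m\,\delta_{ab}\bigr)$ I use that it is much smaller than the trivial $\eta^{-1}$: differentiating the (averaged and entrywise) local law of Theorem \ref{refthm1} via Cauchy's formula on a circle of radius $\asymp\eta$ around $z$ inside $\b S$, together with the $\ell^{2}$- and averaged estimates of Lemma \ref{prop4.4}, yields for $\sum_{\vec\imath}\E\bigl[(G^{2})_{ab}-m'\delta_{ab}\bigr]c_{\vec\imath}N^{-\nu_1}W_{\vec\imath}$ a bound smaller than the Lemma \ref{lem4.2} bound for $V$ by the factor $\eta+(N\eta)^{-\widetilde\nu_3(V)/2}$ — the $\eta$ because the deterministic and typical sizes of $\ul{G^{2}}$ are $O(1)$, the $(N\eta)^{-\widetilde\nu_3(V)/2}$ by combining the Ward/$\ell^{2}$ gain of the distinguished (possibly off-diagonal) factor with those already accounted for $W$. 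When the distinguished factor is diagonal and is summed against the rest, the structure $\tr G^{2}=N\partial_z\ul G$ appears, whose centered part $\langle\ul{G^{2}}\rangle=\partial_z[G]+(\ul{H^{2}}-1)(mm')'$ is controlled, by Cauchy's formula applied to $[G]$ and by \eqref{bound for H^2}, in terms of $\cal M$ and $N^{-1/2-\beta}$; Hölder's inequality against the remaining $[G]/[G^{*}]$ factors of $W$ then produces the $N^{b_0(V)}\cal M^{\nu_4(V)+1}$ term together with the $k\geq 1$ terms, which trade powers of $\cal M$ for powers of $(N\eta)^{-1}$.

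I expect the principal obstacle to be bookkeeping rather than any single estimate. One must track precisely how the six exponents (hence $b_0$ and $\cal B$) transform under deleting the distinguished factor, under collapsing an index, and — when $\partial_z$ hits a $[G]$ inside $W$ — under producing a new $(G^{2})$-factor while lowering $\nu_4$, and then check that \emph{every} term generated, including the Cauchy/cumulant remainders, lands below $\cal B(V)$. One must also combine the high-probability ($\prec$) bounds for individual Green-function entries with the $L^{2n}$-quantity $\cal M$ correctly, splitting expectations over the relevant high-probability event and its complement exactly as in the treatment of $\cal R_{\ell+1}$ in Section \ref{section4.1}. A minor delicacy is that in some regimes one needs the slightly sharper, $z$-differentiated version of the local law (rather than just the generic bounds of Lemma \ref{prop4.4}) to get the $\eta$ and $(N\eta)^{-1}$ gains with the stated exponents.
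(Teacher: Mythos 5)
Your decomposition $(G^{2})_{ab}=m'(z)\delta_{ab}+\bigl[(G^{2})_{ab}-m'(z)\delta_{ab}\bigr]$ handles the principal part correctly (replacing a diagonal $(G^2)_{aa}$ by $m'$ indeed trades the factor $N^{\alpha}=\eta^{-1}$ in $b_0$ for an $O(1)$ constant, landing in the $k=0$, $\eta$-part of $\cal B(V)$), but the treatment of the remainder contains the fatal gap. Differentiating the local law of Theorem \ref{refthm1} via Cauchy's formula on a circle of radius $\asymp\eta$ gives at best $(G^{2})_{ab}-m'\delta_{ab}\prec \eta^{-1}\bigl(q^{-1}+(N\eta)^{-1/2}\bigr)$, i.e.\ a gain of $q^{-1}+(N\eta)^{-1/2}$ over the trivial bound $\eta^{-1}$. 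The lemma requires a gain of $\eta+(N\eta)^{-\widetilde{\nu}_3(V)/2}$, and when $\widetilde{\nu}_3(V)=2$ (all factors diagonal, so no Ward-identity gain is available elsewhere) this is $\eta+(N\eta)^{-1}$, which is \emph{much smaller} than $q^{-1}+(N\eta)^{-1/2}$ throughout most of $\mathbf{D}_\tau$ (e.g.\ $\eta=N^{-1/2}$, $q=N^{1/4}$). This is not a "minor delicacy": the pointwise fluctuation of $G_{ii}-m$ for a sparse matrix is genuinely of order $1/q$ (and even its deterministic shift is of order $1/q^2$, which already exceeds $\eta+(N\eta)^{-1}$ for $\beta<1/4$ in part of the regime), so no sharpened entrywise local law can close the gap. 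The required smallness lives only in the summed-and-averaged quantity $\sum_{i_1,\dots,i_{\nu_0}}\bb E\,V$, and extracting it needs the expectation, not pointwise bounds.

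The paper's proof is built precisely to overcome this: it applies the resolvent identity $z(G^2)_{z_1w_1}=(HG^2)_{z_1w_1}-G_{z_1w_1}$ followed by the cumulant expansion, observes that the resulting cumulant terms $L^{(1,i,1)}_k$, $L^{(2,ji,1)}_k$ are again finite linear combinations of monomials in $\cal V$ with $\nu_2\ge 1$, and iterates. The bookkeeping via the ratio $I(V_*)$ shows each round of expansion improves the bound by a factor $N^{-\beta}$, so after $O(1/\beta)$ rounds the initial deficit $I(V)=N$ is beaten and every surviving term is $O_\prec(\cal B(V))$. A single application of the local law (your proposal) corresponds to one round of this recursion and gains only one factor of $1/q$; since $\beta$ can be arbitrarily small, no fixed finite number of explicit expansions written by hand suffices, which is exactly why the abstract classes $\cal U,\cal V,\cal W$ and the recursive scheme are introduced. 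To repair your argument you would have to replace the one-shot splitting by such an iterated self-consistent expansion, at which point you recover the paper's proof.
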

\begin{example}
	Let $\cal Y=\{G,G^*\}$ and set
	\[
	U\equiv U_{ij}\deq \frac{1}{N^2} \cal C_4(H_{ij})  \bb E [G]^{n-1}[G^*]^{n-1} (G^{*2})_{ii}G^*_{jj}G_{ii}G_{jj}\,.
	\]
	Note that $Nq^2\cal C_4(H_{ij})\asymp 1$, and thus $U \in \cal V^{(\nu_0,\nu_1)}(\cal Y)\subset \cal U^{(\nu_0,\nu_1)}(\cal Y)$, with $\nu_0=2$ and $\nu_1=2+(1+2\log_N q)=3+2\beta$. We also have $\nu_2(U)=2-1=1$, $\nu_3(U)=2\wedge 0=0$, $\widetilde{\nu}_3(U)=2-0=2$, $\nu_4(U)=2n-2$, and $b_0(U)=-1-2\beta+\alpha$.
	
	By Lemma \ref{prop4.4}, we have
	\[
	\sum_{i,j} U_{ij} \prec \frac{1}{N^3q^2} \cdot N^{2} \cdot \cal M^{2n-2} \cdot \frac{1}{\eta} =\frac{1}{Nq^2\eta}\cdot \cal M^{2n-2}=N^{b_0(U)}\cdot \cal M^{2n-2},
	\]
	which agrees with Lemma \ref{lem4.2}. On the other hand, Lemma \ref{lem4.3} implies the improved estimate
	\[
	\sum_{i,j} U_{ij} \prec \frac{1}{Nq^2\eta}\cal M^{2n-1}+\sum_{k=0}^{2n-2} \frac{1}{Nq^2\eta}\frac{1}{(N\eta)^k}\Big(\eta+\frac{1}{N\eta}\Big)\cdot \cal M^{2n-2-k}\,.
	\]
\end{example}
In order to handle all terms in $L_k$, we also need the following formal polynomials.
\begin{enumerate}

\item[$\bullet$] For $\nu_0 \in \bb N,\nu_1 \in \bb R$, denote by $\cal W^{(\nu_0,\nu_1)}(\cal Y)$ the set of monomials with coefficient $a_{i_1,...,i_{\nu_0}}N^{-\nu_1}$ in the variable $[Y]$ and also contain exactly one factor of $\langle Y^{(1)}_{x_1y_1}\cdots Y^{(k)}_{x_ky_k}\rangle$. Here $Y,Y^{(1)},...,Y^{(k)} \in \cal Y$, $k\in \bb N_+$, $x,y \in \{i_1,...,i_{\nu_0}\}$, and $(a_{i_1,...,i_{\nu_0}})_{1\leq i_1,...,i_{\nu_0}\leq N}$ is some family of complex numbers that is uniformly bounded in $i_1,...,i_{\nu_0}$.

\item[$\bullet$] Set $\cal W(\cal Y)=\bigcup_{\nu_0,\nu_1}\cal W^{(\nu_0,\nu_1)}(\cal Y)$.
\end{enumerate}
Next, we define the following maps $\nu_0,\nu_1,\nu_3,\widetilde{\nu}_3,\nu_4$: $\cal W(\cal Y) \to \bb N$.
\begin{enumerate}
	\item For $W \in \cal W^{(\nu_0,\nu_1)}(\cal Y)$,  $(\nu_0(W),\nu_1(W))=(\nu_0,\nu_1)$.
	\item ${\nu}_3(W)=$ $2\wedge $(number of $Y_{xy}$ in $W$ with $x \ne y$ and $Y \in \cal Y$). Set $\widetilde{\nu}_3(W)=2-\nu_3(W) $.
	\item $\nu_4(W)=$ number of $ [Y]$ in $W$ with $Y \in \cal Y$.
\end{enumerate}
The following is a trivial result from Lemma \ref{Ward}.
\begin{lemma} 
	\label{lem4.4}
	Let $\cal Y=\{G,G^*\}$, and fix $W \in \cal W^{(\nu_0, \nu_1)}(\cal Y)$. Then
	\begin{equation*} 
	\sum_{i_1,...,i_{\nu_0}} \bb E  W_{i_1,...,i_{\nu_0}} =O_{\prec}(N^{b_1(W)}) \cdot \cal M^{\nu_4(W)}\,,
	\end{equation*}
	where $b_1(W)=\nu_0(W)-\nu_1(W)+(\alpha/2-1/2){\nu}_3(W)$.
\end{lemma}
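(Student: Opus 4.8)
The plan is to reduce the claimed bound to a direct application of the Ward identity, Lemma \ref{Ward}, combined with the power-counting already encoded in Lemma \ref{prop4.4}. Recall that a monomial $W \in \cal W^{(\nu_0,\nu_1)}(\cal Y)$ has, by definition, the shape
\[
W_{i_1,\dots,i_{\nu_0}} = a_{i_1,\dots,i_{\nu_0}} N^{-\nu_1} \cdot \Big(\prod_{\ell} [Y^{(\ell)}]\Big)\cdot \big\langle Y^{(1)}_{x_1y_1}\cdots Y^{(k)}_{x_ky_k}\big\rangle\,,
\]
with exactly $\nu_4(W)$ factors $[Y]$ and exactly one ``centered product'' factor of resolvent entries. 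The key observation is that the $[Y]$ factors are completely inert for this estimate: each of them is bounded by $\cal M$ in $L^{2n}$, producing the $\cal M^{\nu_4(W)}$ on the right-hand side via H\"older's inequality, and all the work is in bounding $\big\langle Y^{(1)}_{x_1y_1}\cdots Y^{(k)}_{x_ky_k}\big\rangle$ summed over the indices.

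First I would separate variables: write the sum over $i_1,\dots,i_{\nu_0}$, pull the $[Y]$ factors out under H\"older (their $2n$-norm is $\cal M$), and reduce to showing
\[
\sum_{i_1,\dots,i_{\nu_0}} \absb{a_{i_1,\dots,i_{\nu_0}}}\,\Big\|\big\langle Y^{(1)}_{x_1y_1}\cdots Y^{(k)}_{x_ky_k}\big\rangle\Big\|_{p} = O_\prec\big(N^{\nu_1(W) - \nu_0(W) + \nu_1 + (\alpha/2-1/2)\nu_3(W)} \cdot N^{\nu_1(W)}\big)
\]
— more precisely that the deterministic prefactor is $O_\prec(N^{b_1(W)+\nu_1(W)})$ so that after dividing by $N^{\nu_1(W)}$ we land on $N^{b_1(W)}$. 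Since $a$ is uniformly bounded, the $\nu_0(W)$ free summations each cost a factor $N$; this accounts for the $\nu_0(W)$ in $b_1(W)$. Next I would bound the centered product $\langle Y^{(1)}_{x_1y_1}\cdots Y^{(k)}_{x_ky_k}\rangle$ itself, using that $|\langle X\rangle| \leq |X| + |\bb E X|$ and the entrywise bound $|Y^{(\ell)}_{xy}| \prec |m|\delta_{xy} + (q^{-1} + (N\eta)^{-1/2})$ from Theorem \ref{refthm1}. The crucial point is the exponent $\alpha/2 - 1/2$ attached to $\nu_3(W)$: for each of the (at most two, counted by $\nu_3$) factors $Y_{xy}$ with $x \neq y$, one exploits the Ward identity $\sum_j |G_{ij}|^2 = \eta^{-1}\im G_{ii} \prec \eta^{-1}$, so that summing $|Y_{xy}|^2$ over a free index gives $\eta^{-1} = N^{\alpha}$ in place of the naive $N \cdot N^{-1} = N^0$ from the pointwise bound $|Y_{xy}|^2 \prec N^{-1}$ (ignoring the $q$-term, which is even better in $\bf D_\tau$); by Cauchy–Schwarz this is exactly the improvement $N^{\alpha/2 - 1/2}$ per off-diagonal factor, capped at two factors because one only gets to use Ward on a bounded number of summation indices before the diagonal factors force $|G_{ii}|\asymp 1$ with no further gain. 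Off-diagonal factors beyond the second are simply bounded pointwise by $O_\prec(1)$, losing nothing relative to the claim since $\nu_3$ is capped at $2$.

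Assembling: each free index summation contributes $N$ ($\nu_0(W)$ of them); the coefficient contributes $N^{-\nu_1(W)}$; the two (or fewer) off-diagonal factors contribute $N^{\alpha/2-1/2}$ each via Ward; remaining entries of the centered product are $O_\prec(1)$; and the $[Y]$ factors contribute $\cal M^{\nu_4(W)}$. Multiplying gives $O_\prec(N^{\nu_0(W) - \nu_1(W) + (\alpha/2-1/2)\nu_3(W)})\cal M^{\nu_4(W)} = O_\prec(N^{b_1(W)})\cal M^{\nu_4(W)}$, as claimed. The only mild subtlety — and the step I would be most careful about — is verifying that one can always arrange the two Ward-identity gains on \emph{distinct} free summation indices (so the two $N^{\alpha/2-1/2}$ factors genuinely multiply rather than compete for the same index), and that the centered product is dominated by its non-centered version up to $|\bb E(\cdot)|$, which is itself controlled by the same entrywise bounds; since $\nu_3$ is defined as $2\wedge(\text{number of off-diagonal factors})$ this bookkeeping is exactly what the definition was set up to make routine, so the proof is indeed, as the paper says, "trivial" given Lemma \ref{Ward} and Lemma \ref{prop4.4}.
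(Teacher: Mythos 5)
Your argument is correct and is exactly the one the paper intends: the paper gives no written proof, calling the lemma ``a trivial result from Lemma \ref{Ward}'', and your combination of the Ward identity with Cauchy--Schwarz (a gain of $(N\eta)^{-1/2}=N^{\alpha/2-1/2}$ per off-diagonal entry, capped at two by the definition of $\nu_3$, with surplus off-diagonal factors bounded by $O_\prec(1)$), H\"older for the $[Y]$ factors, and a factor $N$ per free index reproduces $b_1(W)$ precisely. One small correction: the parenthetical claim that the $1/q$ term in the entrywise bound is ``even better in $\mathbf{D}_\tau$'' is false (for $\eta\asymp 1$ one has $1/q=N^{-\beta}\gg (N\eta)^{-1/2}$ since $\beta<1/2$), which is exactly why the summation must be kept inside the Ward identity rather than bounding each $\|G_{xy}\|_p$ individually and then summing --- your actual argument does keep the sum inside, so the proof stands.
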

We have the following improved estimate for Lemma \ref{lem4.4}, whose proof is postponed to Section \ref{sec5.2}. The necessity of this result is explained in Remark \ref{rmk4.7} below.
\begin{lemma} \label{lem4.5}
	 Let us adopt the assumptions in Lemma \ref{lem4.4}. We have
\begin{equation*} 
\sum_{i_1,...,i_{\nu_0}}\bb E  W_{i_1,...,i_{\nu_0}} \prec\cal B^{(1)}(W)\,,
\end{equation*}
where 
\begin{multline*}
\cal B^{(1)}(W)=N^{b_1(W)}\Big(\frac{1}{(N\eta)^{\widetilde{\nu}_3(W)/2}}+\frac{1}{\sqrt{N}q}+\cal M\Big)\cdot \cal M^{\nu_4(W)}\\
+\sum_{k=1}^{\nu_4(V)}N^{b_1(W)}\frac{1}{(N\eta)^{k}}\Big(\frac{\eta}{q}+\frac{1}{(N\eta)^{\widetilde{\nu}_3(W)/2}}\Big) \cdot \cal M^{\nu_4(W)-k}\,.
\end{multline*}
\end{lemma}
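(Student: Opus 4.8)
The plan is to upgrade the trivial bound of Lemma \ref{lem4.4} by exploiting the centering $\langle Y^{(1)}_{x_1y_1}\cdots Y^{(k)}_{x_ky_k}\rangle$ and summing the off-diagonal indices with a Ward identity. Recall that by Lemma \ref{lem4.4} the only savings beyond the deterministic power counting $N^{b_1(W)}$ come from the $\nu_3(W)$ off-diagonal factors (each worth a factor $\sqrt{(\alpha-1)/1}$, i.e. $(N\eta)^{-1/2}$ after Ward). The point of Lemma \ref{lem4.5} is that the single centered factor $\langle\cdots\rangle$ in $W$ supplies an \emph{additional} smallness: either it is small by concentration (of order $(N\eta)^{-1/2}$ plus sparsity $(\sqrt N q)^{-1}$ plus $\cal M$ when one can pull out a $[G]$), or else one produces an extra $[G]$-factor from the expansion, at the cost of a power of $(N\eta)^{-1}$ and the edge factor $\eta/q$. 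So the strategy mirrors the proof of Lemma \ref{lem4.3}: perform one more cumulant/resolvent expansion, this time on the centered product inside $W$, and bookkeep the resulting monomials against $\cal U(\cal Y)$ and $\cal W(\cal Y)$.

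Concretely, I would first reduce to the case where the centered factor is a \emph{single} centered resolvent entry or normalized trace, since a product $\langle Y^{(1)}_{x_1y_1}\cdots Y^{(k)}_{x_ky_k}\rangle$ can be telescoped as $\langle AB\rangle = \langle A\rangle \bb E B + A \langle B\rangle$ repeatedly, each piece either falling into $\cal W$ with a genuine centered scalar, or producing a bounded deterministic coefficient times a monomial in $\cal U$ — to which Lemmas \ref{lem4.2} and \ref{lem4.3} already apply. Then for the base case $\langle (G^{m})_{xy}\rangle$ or $\langle \ul{G^m}\rangle$, apply the resolvent identity $zG = GH - I$ (as in Section \ref{section4.1}) followed by the cumulant expansion \eqref{eq:cumulant_expansion}. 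The second-cumulant term reproduces a self-consistent equation whose inverse is $O(1)$ by \eqref{bound for T}; the derivative hitting $(G^m)_{xy}$ creates at most two new resolvent entries, and crucially can create a new off-diagonal index or a new $[G]$; the derivatives hitting the ambient $[Y]$-factors each produce a $[\,\cdot\,]$-derivative of size $\prec 1/(Nq)$ by \eqref{diffH}, i.e. they lower $\nu_4$ by one but gain $(N\eta)^{-1}\eta/q$-type smallness. Collecting these, and invoking \eqref{Tk}, \eqref{410}, and the Ward identity Lemma \ref{Ward} to sum the freed indices, gives exactly the two contributions in $\cal B^{(1)}(W)$: the first line from the "leading" expansion terms (concentration $(N\eta)^{-\widetilde\nu_3/2}$, sparsity $(\sqrt N q)^{-1}$, or an extra $\cal M$), and the sum over $k$ from iteratively peeling off $[Y]$-factors.

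Several routine but slightly delicate points I would flag. (a) One must track the exponent arithmetic carefully: a derivative $\partial_{H_{kl}}(G^m)_{xy}$ splits $G^m$ into $G^{a}$ and $G^{b}$ with $a+b = m$, so $\nu_2$ is preserved while $\nu_0$ may increase by the new summation indices $k,l$; the cumulant $\cal C_{s+1}(H_{kl}) = O(1/(Nq^{s-1}))$ by Lemma \ref{Tlemh} then balances the $N$-powers from the $\sum_{k,l}$, and the net effect is the appearance of the $1/(\sqrt N q)$ and $\eta/q$ factors. (b) The diagonal correction term $K$ (cf. \eqref{3.13}) and the remainder $\cal R_{l+1}$ (cf. \eqref{RRR}) are handled exactly as in Section \ref{section4.1}, being negligible for $l$ large. (c) Since $z \in \b D_\tau$ we have $\kappa + \eta \gtrsim \tau$, so $|m'| \asymp 1$ by Lemma \ref{lem:m} and there is no edge degeneracy to worry about — the factor $\eta/q$ rather than $1/q$ in the $k$-sum is what one genuinely gains from the $[Y]$-derivatives via \eqref{diffH}.

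The main obstacle will be the combinatorial bookkeeping of \emph{which} expansion terms land in the first line of $\cal B^{(1)}(W)$ versus the $k$-sum, and ensuring the induction on $\nu_4$ closes — i.e. that each term with $\nu_4$ lowered genuinely carries the compensating $(N\eta)^{-1}(\eta/q + (N\eta)^{-\widetilde\nu_3/2})$ factor and never generates a worse power of $(N\eta)^{-1}$ than advertised. This is the same structural difficulty as in Lemma \ref{lem4.3}, so I would organize the proof to parallel that one as closely as possible, reusing its case analysis and deferring the shared estimates; the genuinely new input is only the extra centering, handled by the telescoping reduction above.
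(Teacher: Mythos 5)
Your overall strategy (resolvent identity plus cumulant expansion, recursing in parallel with the proof of Lemma \ref{lem4.3}) is the right one, but the reduction step on which everything else rests is broken. The identity $\langle AB\rangle=\langle A\rangle\,\bb E B+A\langle B\rangle$ is false: the correct statement is $\langle AB\rangle=\langle A\rangle\,\bb E B+A\langle B\rangle-\bb E\big[\langle A\rangle\langle B\rangle\big]$, and the covariance correction is exactly the kind of quantity you cannot afford to drop. For diagonal entries one only knows $\langle G_{xx}\rangle\prec 1/q+1/\sqrt{N\eta}$, so the naive bound on $\bb E[\langle A\rangle\langle B\rangle]$ is of order $1/q^2=N^{-2\beta}$, which for $\widetilde\nu_3(W)>0$ and $\eta\gg q^2/N$ is \emph{larger} than every term in $\cal B^{(1)}(W)$ (in particular larger than $1/(\sqrt N q)$, since $\beta<1/2$). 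Improving the covariance to an acceptable size is itself a fluctuation estimate of the same difficulty as Lemma \ref{lem4.5}, so the telescoping is circular rather than a reduction. Relatedly, the piece $\langle A\rangle\,\bb E B\cdot[G]^{\nu_4}$ is not ``a bounded coefficient times a monomial in $\cal U$'': it still carries a centered factor, hence lies in $\cal W$, and feeding it to Lemma \ref{lem4.2} throws away precisely the gain the lemma is supposed to produce.

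The paper never splits the centered product. It uses $\bb E\langle X\rangle Y=\bb E X\langle Y\rangle$ to move the single bracket onto $[G]^{\nu_4}$, applies $zG_{x_1y_1}=(HG)_{x_1y_1}-\delta_{x_1y_1}$ and the cumulant expansion to the \emph{first} factor only, and confronts the resulting leading term $(-z-\bb E\ul{G})^{-1}\langle\delta_{x_1y_1}G_{x_2y_2}\cdots G_{x_ky_k}\rangle[G]^{\nu_4}$, which is of the \emph{same} size as the original $W$. The resolution is to iterate: each pass strips one factor off the product while preserving $\nu_0,\nu_1,\nu_3,\nu_4$ and the ratio $I^{(1)}$, and after $k$ passes one reaches $\langle 1\rangle[G]^{\nu_4}=0$ identically --- this exact cancellation, not any smallness of the individual centered factors, is what makes the centering pay off. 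The terms where derivatives hit $[G]^{\nu_4}$ land in $\cal V$ with $\nu_2\geq1$ and are dispatched by Lemma \ref{lem4.3} (note $\partial[G]/\partial H_{ij}=-2N^{-1}(G^2)_{ij}-4N^{-1}mm'H_{ij}$, so the size is $1/(N\eta)+1/(Nq)$, not $1/(Nq)$ as you wrote; the $(G^2)$ part is why Lemma \ref{lem4.3} is needed rather than a direct bound), while the terms where no derivative hits $[G]^{\nu_4}$ stay in $\cal W$ with $\nu_4$ unchanged and a gain of $N^{-\beta}$ in $I^{(1)}$, closing the recursion after finitely many rounds since $I^{(1)}(W)=N$. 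If you replace your telescoping by this shift-and-peel mechanism, the rest of your outline (cumulant bookkeeping, the $\eta/q$ gain from the $H$-derivative of $\ul{H^2}-1$, negligibility of $K$ and the remainder) is essentially correct.
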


We close this section with the following estimate.
	\begin{lemma} \label{lemH}
		Let $\cal Y=\{G,G^*\}$, and fix $U \in \cal U^{(\nu_0, \nu_1)}(\cal Y)$. For $i,j \in \{i_1,...i_{\nu_0}\}$, we have
		\begin{equation} \label{41777}
		\sum_{i_1,...,i_{\nu_0}}\bb E H_{ij}   U_{i_1,...,i_{\nu_0}} =O_{\prec}(N^{b_2(U)}) \cdot \sum_{k=0}^{\nu_4(U)}\frac{1}{(N\eta)^k}\cal M^{\nu_4(U)-k}\,,
		\end{equation}
		where $b_2(U)=\nu_0(U)-\nu_1(U)+\alpha\nu_2(U)-1$.
	\end{lemma}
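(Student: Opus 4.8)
The plan is to prove Lemma \ref{lemH} by a single cumulant expansion in the variable $H_{ij}$, mirroring the structure of the expansion already carried out in Section \ref{section4.1}. Write $U_{i_1,\dots,i_{\nu_0}}$ with its coefficient $a_{i_1,\dots,i_{\nu_0}}N^{-\nu_1}$ stripped off, so that we must bound $N^{-\nu_1}\sum_{i_1,\dots,i_{\nu_0}} a_{\dots}\,\bb E\big[H_{ij}\cdot \wt U_{i_1,\dots,i_{\nu_0}}\big]$, where $\wt U$ is a product of factors $(Y^m)_{xy}$ and $[Y]$ with $Y\in\{G,G^*\}$. Applying Lemma \ref{lem:cumulant_expansion} with $h=H_{ij}$ and $f=\wt U$, we get $\bb E[H_{ij}\wt U]=\sum_{k\geq 1}\frac{1}{k!}\cal C_{k+1}(H_{ij})\bb E[\partial_{H_{ij}}^{k}\wt U]+(\text{remainder})$. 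The remainder is negligible by the same routine estimate used for \eqref{RRR}, so we focus on the main terms.

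Next I would control a single term $\cal C_{k+1}(H_{ij})\,\bb E[\partial_{H_{ij}}^{k}\wt U]$. By Lemma \ref{Tlemh}, $\cal C_{k+1}(H_{ij})=O(1/(Nq^{k-1}))$, which contributes a factor $N^{-1}q^{-(k-1)}=N^{-1-(k-1)\beta}$. Each application of $\partial_{H_{ij}}$, via the differential rule \eqref{diff}, either (a) differentiates one of the matrix-entry factors $(Y^m)_{xy}$, which by the Leibniz rule for resolvent powers splits it into a sum of products $(Y^{m_1})_{x\,i}(Y^{m_2})_{j\,y}$ etc.\ (plus the analogue with $i\leftrightarrow j$), thereby keeping $\nu_2$ roughly constant but possibly raising $\nu_3$; or (b) differentiates one of the $[Y]$ factors, producing via \eqref{diffH}-type identities a term of the form $-\frac{1}{N}\big((Y\cdot)_{xy}+\dots\big)$ carrying an extra $N^{-1}$ and turning a $[Y]$ into a matrix entry, which lowers $\nu_4$ by one and contributes a factor $\cal M^{-1}$ worth of room. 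After distributing all $k$ derivatives and expanding the product, each resulting monomial lies in $\cal U^{(\nu_0',\nu_1')}(\cal Y)$ for some updated exponents, and I would then apply Lemma \ref{lem4.2} to it. The bookkeeping is: the new monomial has $\nu_0$ increased by at most $2$ (the indices $i,j$, already present by hypothesis, get reused, so genuinely $\nu_0'=\nu_0$ when $i,j\in\{i_1,\dots,i_{\nu_0}\}$), the total resolvent-power count $\nu_2$ is preserved under option (a) and each summand, and the exponent $\nu_1$ picks up $k$ from the cumulant/differential-rule prefactors. Tracking these through $b_0$ of Lemma \ref{lem4.2} and checking that the worst case is $k=1$, one recovers exactly $N^{b_2(U)}=N^{\nu_0-\nu_1+\alpha\nu_2-1}$ as the governing power, with the sum $\sum_{k=0}^{\nu_4(U)}(N\eta)^{-k}\cal M^{\nu_4(U)-k}$ accounting for the successive possibilities that $0,1,\dots,\nu_4(U)$ of the $[Y]$ factors have been differentiated (each such differentiation costing a factor $\frac{1}{N}\cdot\frac{1}{\eta}\asymp\frac{1}{N\eta}$ from \eqref{diffH} combined with the $\eta^{-1}$ in Lemma \ref{prop4.4}(i) type bounds, and removing one $\cal M$).

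The main obstacle I anticipate is the combinatorial bookkeeping of how the three quantities $\nu_2$, $\nu_3$ and $\nu_4$ evolve under repeated differentiation, and verifying that the exponent $b_2(U)$ is genuinely achieved only in the leading case while all higher-order cumulant/derivative terms are strictly smaller by a power of $N^{-\beta}$ or $N^{-\zeta}$. In particular one must be careful that differentiating a matrix entry $(Y^m)_{xy}$ can split it into two entries and hence \emph{increase} $\nu_3$ by at most $2$, but since $\nu_3$ already appears with the favorable exponent $\alpha/2-1/2\leq 0$ in $b_0$, this only helps; the genuinely delicate point is the interplay when $x=i$ or $y=j$ already, so that differentiation produces a diagonal entry $(Y^{m'})_{ii}$ rather than an off-diagonal one. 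I would handle this by always bounding in the worst case (treating every produced entry as off-diagonal unless it is forced diagonal), which is exactly what $\nu_3=2\wedge(\cdots)$ in the definition is designed to accommodate. Once this accounting is fixed, the estimate \eqref{41777} follows by summing the finitely many ($k\leq l$) contributions and absorbing the negligible remainder.
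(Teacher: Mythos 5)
Your proposal is correct and follows exactly the route the paper takes: the paper's proof of Lemma \ref{lemH} consists precisely of applying Lemma \ref{lem:cumulant_expansion} with $h=H_{ij}$ and then bounding the resulting terms via Lemma \ref{lem4.2}, with all details omitted. Your bookkeeping (the $k=0$ cumulant term vanishing since $\bb E H_{ij}=0$, the $\cal C_2\asymp N^{-1}$ factor producing the $-1$ in $b_2$, preservation of $\nu_2$ under differentiation of matrix entries, the $\frac{1}{N\eta}$ cost per differentiated $[Y]$ factor, and the favorable sign of the $\nu_3$ exponent) correctly fills in those omitted details.
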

\begin{proof}
	The proof follows by applying Lemma \ref{lem:cumulant_expansion} on LHS of \eqref{41777} with $h=H_{ij}$, and then estimating the result by Lemma \ref{lem4.2}. We omit the details here.
\end{proof}
In the next two section we shall estimate $\bb E L_{2}$ and $\bb E L_3$ using the above lemmas.
\subsection{The estimate of $\bE L_2$} \label{section4.3}
In this section we prove the following result.
\begin{lemma} \label{lem4.6}
Let $L_2$ be as in \eqref{4.16}. Let $\delta,\xi$ be as in \eqref{gamma'}, \eqref{chi}. We have 
\begin{equation} \label{L_k}
\bb E L_2\prec N^{-\delta}\cal M^{2n}+\sum_{r=1}^{2n}\bigg(\frac{N^{-\xi}}{(N\eta)^{r}}+\bigg(\frac{N^{-\delta/4}}{\sqrt{N}q}\bigg)^{r}\bigg)\cal M^{2n-r}\,.
\end{equation}
\end{lemma}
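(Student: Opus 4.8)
The plan is to unfold the definition of $L_2$ from \eqref{3.14} with $k=2$, namely
\[
\bb E L_2 = N^{-1}\sum_{i,j}\frac{1}{2!}\,\mathcal C_3(H_{ji})\,\bb E\frac{\partial^2\big(\langle[G^*]^n[G]^{n-1}\rangle G_{ij}\big)}{\partial H_{ji}^2}\,,
\]
and to expand the second derivative using the differential rule \eqref{diff} together with the product rule, so that each resulting term is (up to the bounded coefficient $\tfrac12 N\cdot q\cdot \mathcal C_3(H_{ji})$, which is $O_\prec(1)$ by Lemma \ref{Tlemh} since $\mathcal C_3(H_{ji})=O(1/(Nq))$) a monomial of the type catalogued in Section \ref{sec4.2}. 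Differentiating $G_{ij}$ twice produces products of three Green-function entries of the form $(G^{a})_{xy}$ with $x,y\in\{i,j\}$; differentiating the bracket $\langle[G^*]^n[G]^{n-1}\rangle$ once or twice produces factors of $[G]$, $[G^*]$ and again Green-function entries, using that $\partial[G]/\partial H_{ij}$ and $\partial\ul{H^2}/\partial H_{ij}$ are both controlled via \eqref{diff} and \eqref{diffH}. The upshot is that $\bb E L_2$ is a finite sum of terms of the form $\sum_{i,j}\bb E U_{ij}$ with $U\in\mathcal U^{(\nu_0,\nu_1)}(\{G,G^*\})$ for $\nu_0=2$, plus a handful of terms containing an explicit extra factor $H_{ij}$ (coming from $\partial\ul{H^2}/\partial H_{ij}\sim H_{ij}/N$) which are handled by Lemma \ref{lemH}, plus terms with the $mm'$-type factors from the definition of $[G]$, which are themselves in $\mathcal W(\{G,G^*\})$ or $\mathcal U(\{G,G^*\})$.

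Next I would bookkeep the exponents. For a generic term $U$ arising this way one computes $\nu_1(U)$ (it is $1$ from the prefactor $N^{-1}$, plus $1+2\log_N q$ from $\mathcal C_3(H_{ji})$, i.e.\ $\nu_1=2+\beta$), $\nu_2(U)$ (total of $m-1$ over the entry factors, at most $2$), $\nu_3(U)$ and $\widetilde\nu_3(U)$ (whether the entry factors are diagonal or off-diagonal), and $\nu_4(U)$ (the number of $[G]$ or $[G^*]$ factors, which is $2n$, $2n-1$ or $2n-2$ depending on how many derivatives hit the bracket). The naive bound from Lemma \ref{lem4.2} gives $\sum_{i,j}\bb E U_{ij}\prec N^{b_0(U)}\mathcal M^{\nu_4(U)}$ with $b_0(U)=\nu_0-\nu_1+\alpha\nu_2+(\alpha/2-1/2)\nu_3$; the point of the sharper Lemmas \ref{lem4.3} and \ref{lem4.5} is that for the ``dangerous'' terms — those where the naive exponent is not already smaller than the target $N^{-\xi}/(N\eta)^r$ or $(N^{-\delta/4}/(\sqrt N q))^r$ — one gains an extra $\mathcal M$ or an extra $\eta+(N\eta)^{-\widetilde\nu_3/2}$, which is precisely what converts $b_0$ into something below the threshold. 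So the core of the proof is a case analysis: for each shape of term, decide whether the crude Lemma \ref{lem4.2}/\ref{lem4.4} bound suffices, and if not, invoke Lemma \ref{lem4.3} (when $\nu_2\ge1$) or Lemma \ref{lem4.5} (when there is a $\langle\cdot\rangle$-factor of entries), and check that the resulting estimate fits into the claimed form \eqref{L_k} using $\eta=N^{-\alpha}$, the definitions \eqref{gamma'}, \eqref{chi} of $\delta,\xi$, and $N\eta\ge N^{\tau}$, $q=N^{\beta}$.

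The main obstacle I expect is the combinatorial/organisational one of making sure every term produced by the two differentiations is placed in the right class and that none of them is genuinely larger than the target bound — in particular the terms where both derivatives hit the bracket $\langle[G^*]^n[G]^{n-1}\rangle$, producing something like $N^{-1}\sum_{i,j}\mathcal C_3(H_{ji})\,\bb E\,(\partial_{ji}\text{bracket})\,(\partial_{ji}\text{bracket})\,G_{ij}$, and the ``self-improving'' terms proportional to $\mathcal M^{2n}$ itself, which must come with a coefficient $O_\prec(N^{-\delta})$ so that they can be absorbed on the left-hand side of \eqref{uzi}. Tracking the off-diagonal gains (the $(\alpha/2-1/2)\nu_3$ versus the $(N\eta)^{-\widetilde\nu_3/2}$ bookkeeping) and verifying that the worst surviving exponent is $\le -\xi$ relative to the leading order — rather than grinding any single estimate — is where the real work lies; the individual bounds are then immediate from Lemmas \ref{lem4.2}--\ref{lemH}. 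A secondary point to be careful about is the contribution of $\ul{H^2}-1$ inside $[G]$: each derivative of it yields a factor $O_\prec(1/(Nq))$ by \eqref{diffH}, which is more than enough to stay below threshold, so those terms are harmless but must still be listed.
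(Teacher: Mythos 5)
Your proposal follows essentially the same route as the paper: expand $L_2$ via the product rule and \eqref{diff}, organize the resulting terms by how the two derivatives distribute between $G_{ij}$ and the bracket, place each term in the classes $\mathcal U$, $\mathcal V$, $\mathcal W$ of Section \ref{sec4.2}, handle the explicit $H_{ij}$ factors by Lemma \ref{lemH} and the derivatives of $\ul{H^2}-1$ by \eqref{diffH}, and invoke the improved Lemmas \ref{lem4.3} and \ref{lem4.5} precisely for the dangerous terms where the crude bounds of Lemmas \ref{lem4.2} and \ref{lem4.4} fall short (as the paper itself emphasizes in Remark \ref{rmk4.7}). The paper's proof is the same case analysis, carried out explicitly on a worst representative in each of the three cases, so your plan is correct and matches it.
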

\begin{proof}
The differential $\partial H^2_{ij}$ gives rise to terms of three types depending on how many derivatives act on $G_{ij}$. We deal with each type separately.

Step 1. Let us look at the case when both derivatives in $L_2$ act on $G_{ij}$, namely the term
\[
\bb E L_{2,1}\deq N^{-1} \cdot \sum\limits_{i,j}\left[\frac{1}{2!}\mathcal{C}_{3}(H_{ji})\bE\langle [G^*]^n [G]^{n-1} \rangle\frac{\partial^2  G_{ij}}{\partial H_{ji}^2}\right]\,.
\]
By Lemma \ref{Tlemh} and the identity $\bb E \langle X \rangle Y=\bb E X \langle Y \rangle$, we see that the worst term in $L_{2,1}$ is
\begin{equation} \label{4.41}
\frac{1}{N^2q}\sum_{i,j}C_{ij} \bb E  [G^*]^n [G]^{n-1}  \langle G_{ii}G_{jj}G_{ij} \rangle \eqd \sum_{i,j} \bb E W_{i,j} \,,
\end{equation}
where $C_{ij}$ are constants uniformly bounded in $i,j$, and  $W \in \cal W(\{G,G^*\})$. Note that $\nu_0(W)=2$, $\nu_1(W)=2+\beta$, ${\nu_3}(W)=1$, $\nu_4(W)=2n-1$, and $b_1(W)=-\beta -(1/2-\alpha/2)$. Thus Lemma \ref{lem4.5} shows
\begin{multline*}
\sum_{i,j} \bb E W_{ij} \prec \frac{1}{\sqrt{N\eta}q} \cal M^{2n}+\frac{1}{\sqrt{N\eta}q}\Big(\frac{1}{\sqrt{N\eta}}+\frac{1}{\sqrt{N}q}\Big)\cdot \cal M^{2n-1}
+\sum_{k=1}^{2n-1} \frac{1}{\sqrt{N\eta}q}\frac{1}{(N\eta)^k}\Big(\frac{\eta}{q}+\frac{1}{\sqrt{N\eta}}\Big)\cdot \cal M^{2n-1-k}\\
\prec N^{-\beta} \cal M^{2n}+N^{-\beta}\Big(\frac{1}{N\eta}+\frac{1}{\sqrt{N}q}\Big)\cdot \cal M^{2n-1}+\sum_{r=2}^{2n}\Big(\frac{N^{-\beta}}{\sqrt{N}q}\frac{N^{-\alpha/2}}{(N\eta)^{r-1}}+\frac{N^{-\beta}}{(N\eta)^{r}}\Big) \cal M^{2n-r}\,,
\end{multline*}
which is bounded by the RHS of \eqref{L_k}. Similarly, one can show that the other terms in $L_{2,1}$ satisfy the same bound.

Step 2. Let us look at the case when only one derivative in $L_2$ acts on $G_{ij}$, namely the term
\begin{equation} \label{L22}
\bb E L_{2,2}\deq N^{-1} \cdot \sum\limits_{i,j}\left[\mathcal{C}_{3}(H_{ji})\bE \frac{\partial \langle [G^*]^n [G]^{n-1} \rangle}{\partial H_{ji}}\frac{\partial  G_{ij}}{\partial H_{ji}}\right]\,.
\end{equation}
By \eqref{diff}, we see that the worst terms above will contain no off-diagonal terms of $G$ from the second differential. Let us pick a representative of these, which is
\begin{multline} \label{4.43}
\frac{1}{N^3q}\sum_{i,j}C_{ij} \bb E  [G^*]^{n-1} [G]^{n-1} ((G^{*2})_{ij}+2H_{ij}) G_{ii}G_{jj}\\
=\frac{1}{N^3q}\sum_{i,j}C_{ij} \bb E  [G^*]^{n-1} [G]^{n-1} (G^{*2})_{ij} G_{ii}G_{jj}+O_{\prec}\Big(\frac{1}{N^2q}\Big)\sum_{k=0}^{2n-2}\frac{1}{(N\eta)^k}\cal M^{2n-2-k}\\
\eqd\frac{1}{N^3q}\sum_{i,j} \bb E V_{ij}+O_{\prec}\Big(\frac{1}{N^2q}\Big)\sum_{k=0}^{2n-2}\frac{1}{(N\eta)^k}\cal M^{2n-2-k}\,.
\end{multline}
Here $C_{ij}$ are constants uniformly bounded in $i,j$, and in the first step of \eqref{4.43} we used Lemma \ref{lemH}. Note that $V \in \cal V$ satisfies $\nu_0(V)=2$, $\nu_1(V)=3+\beta$, $\nu_2(V)=1$, ${\nu_3}(V)=1$, $\nu_4(V)=2n-2$, and $b_0(V)=3(\alpha-1)/2-\beta$. Thus Lemma \ref{lem4.3} shows
\begin{multline} \label{4.56}
\frac{1}{N^3q}\sum_{i,j} \bb E V_{ij} \prec \frac{1}{(N\eta)^{3/2}q} \cal M^{2n-1}+\sum_{k=0}^{2n-2}\frac{1}{(N\eta)^{3/2}q}\Big(\frac{1}{N\eta}\Big)^k \Big(\eta+\frac{1}{\sqrt{N\eta}}\Big)\cdot \cal M^{2n-2-k}\\
\prec \frac{N^{-\beta}}{N\eta}\cal M^{2n-1}+\sum_{r=2}^{2n}\Big(\frac{1}{(N\eta)^{r-3/2}Nq}+\frac{N^{-\beta}}{(N\eta)^r}\Big)\cal M^{2n-r}\,.
\end{multline}
By \eqref{4.56} and
\[
\frac{1}{(N\eta)^{r-3/2}Nq} =\frac{1}{(N\eta)^{r-3/2}(\sqrt{N}q)^{3/2}} \Big(\frac{q^2}{N}\Big)^{1/4} \prec \bigg(\frac{N^{-\delta/8}}{(N\eta)^r}+\frac{N^{-\delta/4}}{\sqrt{N}q}\bigg)^r\,,
\]
we see that \eqref{4.43} is bounded by the RHS of \eqref{L_k}. Similarly, the other terms in $L_{2,1}$ can be shown to satisfy the same bound.

Step 3.  Let us look at the case when no derivatives in $L_2$ act on $G_{ij}$, namely the term
\[
\bb EL_{2,3}\deq N^{-1} \cdot \sum\limits_{i,j}\left[\frac{1}{2!}\mathcal{C}_{3}(H_{ji})\bE \frac{\partial^2 \langle [G^*]^n [G]^{n-1} \rangle}{\partial H^2_{ji}}G_{ij}\right]\,.
\]
Similar as in Step 2, one can use Lemma \ref{lem4.3} to show that
\[
\bb E L_{2,3}  \prec\sum_{r=1}^{2n}\bigg(\frac{N^{-\xi}}{(N\eta)^{r}}+\bigg(\frac{N^{-\delta/4}}{\sqrt{N}q}\bigg)^{r}\bigg)\cal M^{2n-r}\,.
\]
From Steps 1--3 we conclude the proof.
\end{proof}

\subsection{The estimate of $\bE L_3$} \label{sec4.4}
 Now let us look at the case $k=3$. This is the crucial case where we see the cancellation between $\langle\ul{G}\rangle$ and $(\ul{H^2}-1)mm'$.
 We shall prove the following lemma.
 \begin{lemma} \label{lem4.7}
 	Let $L_3$ be as in \eqref{4.16}. Let $\delta,\xi$ be as in \eqref{gamma'}, \eqref{chi}. We have 
 	\begin{equation} \label{L_3}
 	\bb E L_3\prec N^{-\delta}\cal M^{2n}+\sum_{r=1}^{2n}\bigg(\frac{N^{-\xi}}{(N\eta)^{r}}+\bigg(\frac{N^{-\delta/4}}{\sqrt{N}q}\bigg)^{r}\bigg)\cal M^{2n-r}\,.
 	\end{equation}
 \end{lemma}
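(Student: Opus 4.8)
The plan is to estimate $\bb E L_3$ by expanding the third-order cumulant term $L_3 = N^{-1}\sum_{i,j}\frac{1}{3!}\cal C_4(H_{ji})\,\partial_{H_{ji}}^3(\langle[G^*]^n[G]^{n-1}\rangle G_{ij})$ via the Leibniz rule, classifying the resulting monomials according to how many of the three derivatives act on the distinguished factor $G_{ij}$ versus on $\langle[G^*]^n[G]^{n-1}\rangle$. Since $\cal C_4(H_{ji}) \asymp 1/(Nq^2)$ by Lemma \ref{Tlemh}, the prefactor $N^{-1}\cdot\cal C_4(H_{ji}) \asymp 1/(N^2 q^2)$, and summing over the $N^2$ pairs $(i,j)$ gives an overall $1/q^2$. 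The terms produced have $\nu_2 \ge 1$ (differentiating $G_{ij}$ three times forces powers $G^2, G^3,\dots$) so they fall into $\cal V(\{G,G^*\})$ or $\cal W(\{G,G^*\})$, and I would apply Lemma \ref{lem4.3} or Lemma \ref{lem4.5} to each class rather than the crude Lemma \ref{lem4.2}/\ref{lem4.4}, exactly as in the three-step structure of the proof of Lemma \ref{lem4.6}.

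First I would handle the case where all three derivatives act on $G_{ij}$: this produces monomials like $N^{-2}q^{-2}\sum_{i,j}C_{ij}\,\bb E[G^*]^n[G]^{n-1}\langle(\cdots)\rangle$ where the inner bracket is a product of four entries of $G$, at least two of them off-diagonal, so $\nu_3 = 2$, $\widetilde\nu_3 = 0$, $\nu_4 = 2n-1$, and $b_1 = -2\beta + (\alpha - 1)$ (after accounting for the off-diagonal gain and the $q^{-2}$). Lemma \ref{lem4.5} then gives a bound by $N^{-2\beta}(N^{-1}\eta^{-1} + (\sqrt N q)^{-1} + \cal M)\cal M^{2n-1} + \cdots$, and since $\beta > \zeta/$constant this is absorbed into the RHS of \eqref{L_3}. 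Second, the case where exactly one or two derivatives hit $G_{ij}$: here I would use Lemma \ref{lemH} to replace stray factors $H_{ij}$ arising from $\partial_{H_{ij}}(\ul{H^2}-1)mm'$ and $\partial_{H_{ij}}$ acting on the $[G]$ factors (via the definition of $[G]$ containing the $(\ul{H^2}-1)mm'$ shift), and then apply Lemma \ref{lem4.3}. The key point — and this is where the $k=3$ case is genuinely different from $k=2$ — is the cancellation announced in the section: the contribution of $\partial_{H_{ji}}$ acting on the $(\ul{H^2}-1)mm'$ piece inside each $[G]$ or $[G^*]$ factor must be tracked because $\partial_{H_{ij}}(\ul{H^2}-1) = \frac{4}{N}H_{ij}(1+\delta_{ij})^{-1}$ by \eqref{diffH}, and the resulting $H_{ij}$-terms partially cancel against the $\langle\ul{G}\rangle$-generated terms; I would isolate the surviving piece and check it has an extra power of $q$ or $\eta$ relative to the naive estimate.

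The main obstacle, as I see it, is bookkeeping the combinatorics of the Leibniz expansion cleanly enough that the abstract machinery of $\cal U, \cal V, \cal W$ applies termwise, together with verifying that the single worst-case term — the one with no off-diagonal gain, i.e. $\widetilde\nu_3 = 2$ — is still controlled. In that case the factor $1/(N\eta)^{\widetilde\nu_3/2} = 1/(N\eta)$ in $\cal B(V)$ or $\cal B^{(1)}(W)$ replaces the off-diagonal smallness, and one needs $N^{b_0}$ or $N^{b_1}$ together with this $1/(N\eta)$ and the $1/q^2$ prefactor to still beat $N^{-\xi}/(N\eta)^r$ after the $\eta$ in the bracket is used; this forces a careful accounting of $\alpha$ (the exponent of $\eta$) against $\delta = \min\{\beta, 1/2-\beta, (1-\alpha)/2\}$, using $\eta \le N^{-\tau}$ on $\b D_\tau$ and the inequality $1/q^2 = (\sqrt N q)^{-1}\cdot (q/\sqrt N) \le (\sqrt N q)^{-1} N^{-2\delta}$. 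I would conclude by collecting the three cases, as in Lemma \ref{lem4.6}, and checking that every term is dominated by $N^{-\delta}\cal M^{2n} + \sum_r (N^{-\xi}(N\eta)^{-r} + (N^{-\delta/4}(\sqrt N q)^{-1})^r)\cal M^{2n-r}$, possibly after redistributing small powers of $N$ between the two terms exactly as in the displayed estimate following \eqref{4.56}.
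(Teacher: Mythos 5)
Your overall skeleton (split by how many of the three derivatives hit $G_{ij}$, feed the resulting monomials into Lemmas \ref{lem4.3} and \ref{lem4.5}, use Lemma \ref{lemH} for stray $H_{ij}$ factors) matches the paper's proof, but there are two substantive errors. First, in Step 1 you claim the inner bracket contains ``at least two off-diagonal'' entries, so $\nu_3=2$, $\widetilde\nu_3=0$. This is wrong: by \eqref{diff}, $\partial^3 G_{ij}/\partial H_{ji}^3$ contains the all-diagonal monomial $G_{ii}^2G_{jj}^2$ (after three derivatives the index multiset consists of four $i$'s and four $j$'s spread over four factors, so a fully diagonal pairing exists). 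The paper's worst term is exactly $\langle G_{ii}^2G_{jj}^2\rangle$ with $\nu_3=0$, $\widetilde\nu_3=2$, $b_1=-2\beta$. The absence of off-diagonal smallness is precisely what makes $k=3$ the critical case; assuming it away removes the difficulty rather than solving it.

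Second, and more seriously, your treatment of the cancellation in Step 2 would not close. The dangerous term is $-\frac{n}{2N^3q^2}\sum_{i,j}s_4\,\bb E[G^*]^{n-1}[G]^{n-1}\bigl(4(G^{*2})_{ii}G^*_{jj}-4\bar m\bar m'\bigr)G_{ii}G_{jj}$, where the $-4\bar m\bar m'$ comes from the \emph{second} derivative $\partial^2(\ul{H^2}-1)/\partial H_{ji}^2=\tfrac4N(1+\delta_{ij})^{-1}$ (deterministic, no $H_{ij}$ left), not from first-derivative $H_{ij}$ terms cancelling against $\langle\ul G\rangle$ as you describe. Applied separately, Lemma \ref{lem4.3} gives each piece a contribution of order $\tfrac{1}{Nq^2}\cal M^{2n-2}$ (the $\eta$ part of the $k=0$ term in $\cal B(V)$, with $b_0=\alpha-1-2\beta$), and $\tfrac{1}{Nq^2}$ is \emph{not} dominated by $\tfrac{N^{-\xi}}{(N\eta)^2}+\tfrac{N^{-\delta/2}}{Nq^2}$ when $\eta\asymp1$. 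So one cannot ``isolate the surviving piece and check an extra power of $q$ or $\eta$'' by direct estimation; the paper instead runs a further self-consistent expansion of $\sum_{i,j}\bb E[\cdots](G^{*2})_{ii}G^*_{jj}G_{ii}G_{jj}$ via the resolvent identity and cumulant formula (equations \eqref{439}--\eqref{441}), and uses the algebraic identity $\bar m/(-\bar z-2\bar m)=\bar m'$ to show that this sum equals $\bar m\bar m'\sum_{i,j}\bb E[\cdots]G_{ii}G_{jj}$ up to admissible errors, whence the two pieces cancel exactly to leading order. This mechanism is the core of the lemma and is missing from your proposal.
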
 
\begin{proof} We still split the estimates basing on how many derivatives hit $G_{ij}$.
	
Step 1. We investigate the case when all derivatives in $L_3$ act on $G_{ij}$, namely the term
\[
\bb EL_{3,1}\deq N^{-1} \cdot \sum\limits_{i,j}\left[\frac{1}{3!}\mathcal{C}_{4}(H_{ji})\bE\langle [G^*]^n [G]^{n-1} \rangle\frac{\partial^3  G_{ij}}{\partial H_{ji}^3}\right]\,.
\]
From Lemma \ref{Tlemh} we see that the worst term in $L_{3,1}$ is 
\begin{equation*} 
\frac{1}{N^2q^2}\sum_{i,j}C_{ij} \bb E  [G^*]^n [G]^{n-1}  \langle G^2_{ii}G^2_{jj}\rangle\eqd \frac{1}{N^2q^2} \sum_{i,j}\bb E W_{ij}\,,
\end{equation*}
where $W \in \cal W$. Note that $\nu_0(W)=2$, $\nu_1(W)=2+2\beta$, ${\nu_3}(W)=0$, $\nu_4(W)=2n-1$, and $b_1(W)=-2\beta $. Thus Lemma \ref{lem4.5} shows
\begin{multline*}
\frac{1}{N^2q^2} \sum_{i,j}\bb E W_{ij} \prec \frac{1}{q^2} \cal M^{2n}+\frac{1}{q^2}\Big(\frac{1}{N\eta}+\frac{1}{\sqrt{N}q}\Big)\cdot\cal M^{2n-1}
+\sum_{k=1}^{2n-1}\frac{1}{q^2}\frac{1}{(N\eta)^k}\Big(\frac{\eta}{q}+\frac{1}{N\eta}\Big) \cdot \cal M^{2n-1-k}\\
\prec N^{-2\beta}\cdot\cal M^{2n}+N^{-2\beta}\Big(\frac{1}{N\eta}+\frac{1}{\sqrt{N}q}\Big)\cdot\cal M^{2n-1}+ \sum_{r=2}^{2n} O_{\prec}\bigg(\frac{N^{-\beta/2}}{Nq^2}\frac{N^{-\beta/2}}{(N\eta)^{r-2}}+\frac{N^{-2\beta}}{(N\eta)^r}\bigg)\cdot \cal M^{2n-r}\,,
\end{multline*}
which is bounded by RHS of \eqref{L_3}.

Step 2. Let us look at the case when only one derivative in $L_3$ acts on $G_{ij}$, namely the term
\[
\bb E L_{3,2}\deq \frac{1}{2N} \cdot \sum\limits_{i,j}\left[\mathcal{C}_{4}(H_{ji})\bE \frac{\partial^2 \langle [G^*]^n [G]^{n-1} \rangle}{\partial H^2_{ji}}\frac{\partial  G_{ij}}{\partial H_{ji}}\right]\,.
\]
We see that one of the worst terms is 
\begin{multline}  \label{4.49}
-\frac{n}{2N^3q^2}\sum_{i,j}s_4(1+C_i\delta_{ij}) \bb E  [G^*]^{n-1} [G]^{n-1} (4(G^{*2})_{ii}G^*_{jj}-4\bar{m}\bar{m}') G_{ii}G_{jj}\\
=-\frac{n}{2N^3q^2}\sum_{i,j}s_4 \bb E  [G^*]^{n-1} [G]^{n-1} (4(G^{*2})_{ii}G^*_{jj}-4\bar{m}\bar{m}') G_{ii}G_{jj}+O_{\prec} \Big(\frac{1}{N\eta}\frac{1}{Nq^2}\Big)\cal M^{2n-2}
\end{multline}
where $s_4=Nq^2\cal C_4(H_{12})\asymp 1$, and $C_i$ are constants uniformly bounded in $i$. Now let us look at the first term on RHS of \eqref{4.49}, which is
\begin{equation*} 
-\frac{2n}{N^3q^2}\sum_{i,j}s_4 \bb E  [G^*]^{n-1} [G]^{n-1} (G^{*2})_{ii}G^*_{jj} G_{ii}G_{jj} \eqd \sum_{i,j}\bb E V_{ij}\,,
\end{equation*}
where $V \in \cal V(\{G,G^*\})$. By the resolvent identity $\bar{z}G^*=HG^*-I$ and Lemma \ref{lem:cumulant_expansion}, we have
\begin{multline*} 
\sum_{i,j}\bb E V_{ij}=\frac{1}{\bar{z}+\bb E \ul{G^*}} \frac{2s_4n}{N^3q^2} \sum_{i,j}\bb E\bigg[[G^*]^{n-1} [G]^{n-1} G^*_{jj} G_{ii}G_{jj}\cdot\big(G^*_{ii}\ul{G^{*2}}+(G^{*2})_{ii}\langle \ul{G^*}\rangle+2N^{-1}(G^{*3})_{ii}\big) \\
+\frac{1}{N} [G^*]^{n-1} [G]^{n-1}\cdot \big(2(G^{*3})_{ij}G^*_{ij}G_{ii}G_{jj}+2(G^{*2}G)_{ii}G^{*}_{jj}G_{ii}G_{jj}+2(G^{*2}G)_{ij}G^*_{jj}G_{ii}G_{ij}\big)\\
+[G^*]^{n-1} [G]^{n-1} G^*_{ii}G^*_{jj} G_{ii}G_{jj} -\widetilde{K}-\sum_i\sum_{k=2}^l \widetilde{L}_{k} - \sum_{a=1}^N\widetilde{\cal R}^{(a)}_{l+1}\bigg]\,,
\end{multline*}
where $\widetilde{K}$, $\widetilde{L}_k$, and $\widetilde{\cal R}^{(a)}_{l+1}$ are defined similar as $K,L_k,\cal R^{(ij)}_{l+1}$ in \eqref{4.6}. By \eqref{diffH} and Lemmas \ref{lem4.2},\ref{lem4.3}, one can check that
\begin{multline} \label{439}
\sum_{i,j}\bb E V_{ij}=\frac{1}{\bar{z}+\bb E \ul{G^*}} \frac{2s_4n}{N^3q^2} \sum_{i,j}\bb E\Big([G^*]^{n-1} [G]^{n-1} G^*_{jj} G_{ii}G_{jj} G^*_{ii}\ul{G^{*2}}+[G^*]^{n-1} [G]^{n-1} G^*_{ii}G^*_{jj} G_{ii}G_{jj}\Big)\\
+\sum_{r=1}^{2n}O_{\prec}\bigg(\frac{N^{-\xi}}{(N\eta)^{r}}+\bigg(\frac{N^{-\delta/4}}{\sqrt{N}q}\bigg)^{r}\bigg)\cal M^{2n-r}\,.
\end{multline}
Similarly, for the first term on RHS of \eqref{439}, we have
\begin{multline} \label{440}
\frac{1}{\bar{z}+\bb E \ul{G^*}} \frac{2s_4n}{N^3q^2} \sum_{i,j}\bb E[G^*]^{n-1} [G]^{n-1} G^*_{jj} G_{ii}G_{jj}G^*_{ii}\ul{G^{*2}}\\=\frac{1}{-\bar{z}-2\bb E\ul{G^*}}\frac{1}{\bar{z}+\bb E \ul{G^*}} \frac{2s_4n}{N^3q^2} \sum_{i,j}\bb E[G^*]^{n-1} [G]^{n-1} G^*_{jj} G_{ii}G_{jj}G^*_{ii}\ul{G^{*}}\\
+\sum_{r=1}^{2n}O_{\prec}\bigg(\frac{N^{-\xi}}{(N\eta)^{r}}+\bigg(\frac{N^{-\delta/4}}{\sqrt{N}q}\bigg)^{r}\bigg)\cal M^{2n-r}\,.
\end{multline}
By \eqref{439},\eqref{440} and Theorem \ref{refthm1}, we have
\begin{multline} \label{441}
\sum_{i,j}\bb E V_{ij}=\frac{2s_4n}{N^3q^2} \frac{1}{\bar{z}+\bar{m}}\Big( \frac{\bar{m}^3}{-\bar{z}-2\bar{m}}+\bar{m}^2\Big)\sum_{i,j}\bb E[G^*]^{n-1} [G]^{n-1}  G_{ii}G_{jj}\\
+\sum_{r=1}^{2n}O_{\prec}\bigg(\frac{N^{-\xi}}{(N\eta)^{r}}+\bigg(\frac{N^{-\delta/4}}{\sqrt{N}q}\bigg)^{r}\bigg)\cal M^{2n-r}\,.
\end{multline}
By $\bar{m}/(-\bar{z}-2\bar{m})=\bar{m}'$ we have
\begin{equation} \label{4,43}
\frac{1}{\bar{z}+\bar{m}}\Big(  \frac{\bar{m}^3}{-\bar{z}-2\bar{m}}+\bar{m}^2\Big)=-\bar{m}\bar{m}'\,.
\end{equation}
Combining \eqref{4.49}, \eqref{441} and \eqref{4,43}, we see the crucial cancellation of the first two terms on RHS of \eqref{4.49}. As a result, we obtain
\[
\eqref{4.49} \prec \sum_{r=1}^{2n}\bigg(\frac{N^{-\xi}}{(N\eta)^{r}}+\bigg(\frac{N^{-\delta/4}}{\sqrt{N}q}\bigg)^{r}\bigg)\cal M^{2n-r}
\]
as desired. The other terms in $\bb E L_{3,2}$ can be directly estimated by \eqref{diff} and Lemma \ref{lem4.3}, and one readily checks that they satisfy the bound on RHS of \eqref{L_3}.

Step 3. The remaining two cases, i.e. when two derivatives or no derivative act on $G_{ij}$, can be analyzed similarly using \eqref{diff} and Lemma \ref{lem4.3}. Note that the estimate is easier than those in Steps 1 and 2: by \eqref{diff}, every term now contains either at least two off diagonal entries of the Green function or derivatives of $\ul{H^2}-1$. We omit the details.

From Steps 1--3 we conclude the proof of Lemma \ref{lem4.7}.
\end{proof}

\subsection{Put things together}  \label{sec4.5}
Up to now, what is left, is the estimate of  $L_k$ for $k \geq 4$. This is similar but easier than the cases when $k=2,3$. In fact, by Lemma \ref{Tlemh} we see that there will be additional factors of $1/q$ in $L_k$ when $k \ge 4$. By a direct estimate using \eqref{diff} and Lemma \ref{lem4.3}, we have
\begin{equation} \label{4.42}
\sum_{k=4}^{l}\bb E L_k \prec  N^{-\delta}\cal M^{2n}+\sum_{r=1}^{2n}\bigg(\frac{N^{-\xi}}{(N\eta)^{r}}+\bigg(\frac{N^{-\delta/4}}{\sqrt{N}q}\bigg)^{r}\bigg)\cal M^{2n-r}
\end{equation}
for any fixed $l \in \bb N_+$. 

By \eqref{4.16}, \eqref{4.42}, Lemmas \ref{lem4.6} and \ref{lem4.7}, we conclude the proof of \eqref{est1}.

\begin{remark} \label{rmk4.7}
	The input of Lemmas \ref{lem4.3} and \ref{lem4.5} are essential in finishing the proof. For example, in the estimate of \eqref{4.41}, the bound from Lemma \ref{lem4.2} only implies
	\[
	\eqref{4.41} \prec  \frac{1}{N^2q} \cdot N^2 \cdot \frac{1}{\sqrt{N\eta}} \cdot \cal M^{2n-1}=\frac{1}{\sqrt{N\eta}q} \cal M^{2n-1}\,,
	\]
	which is not enough to deduce \eqref{L_k}. Also, in the estimate of \eqref{4.43}, the trivial bound from Lemma \ref{lem4.4} only implies
	\[
	\frac{1}{N^3q}\sum_{i,j} \bb E V_{ij} \prec \frac{1}{(N\eta)^{3/2}q} \cal M^{2n-2}\,,
	\]
	which is not enough to deduce \eqref{L_k}.
\end{remark}

\subsection{Proof of \eqref{est2}} \label{sec4.6}
The proof of \eqref{est2} is similar to that of \eqref{est1}. We only sketch the main steps.

Step 1. By Lemma \ref{lem:cumulant_expansion}, we have
\begin{multline} \label{4.15}
\bE [G^*]^n[G]^{n-1} (\ul{H^2}-1)=\frac{1}{N}\sum_{i,j}\bE H_{ij}^2 [G^*]^n[G]^{n-1}-\bb E[G^*]^{n}[G]^{n-1}\\
= \bb E[G^*]^{n}[G]^{n-1}-\frac{n}{N}\bb E [G^*]^{n-1}[G]^{n-1}(2\ul{HG^2}+4\ul{H^2}mm') -\frac{n-1}{N}\bb E [G^*]^{n}[G]^{n-2}(2\ul{HG^{*2}}+4\ul{H^2}\bar{m}\bar{m}')\\
+\bE\widehat{K}+\sum_{k=2}^l\bE\widehat{L}_{k,1} +\sum_{k=2}^l\bE\widehat{L}_{k,2}+\sum_{i,j}\widehat{\cal R}^{(ij)}_{l+1} -\bb E[G^*]^{n}[G]^{n-1}\,,
\end{multline}
where
\begin{equation*} 
\widehat{K}=N^{-2} \sum\limits_{i}\frac{\partial (H_{ii} [G^*]^n[G]^{n-1})}{\partial H_{ii}}(N\cal C_2(H_{ii})-2)\,,
\end{equation*}
\begin{equation*} 
\widehat{L}_{k,1}=N^{-1} \cdot \sum\limits_{i,j}\left(\frac{1}{k!}\mathcal{C}_{k+1}(H_{ji})H_{ij}\frac{\partial^k(  [G^*]^n[G]^{n-1})}{\partial H_{ji}^k}\right)\,, 
\end{equation*}
and
\begin{equation*}
\widehat{L}_{k,2}=N^{-1} \cdot \sum\limits_{i,j}\left(\frac{1}{(k-1)!}\mathcal{C}_{k+1}(H_{ji})\frac{\partial^{k-1}(  [G^*]^n[G]^{n-1})}{\partial H_{ji}^{k-1}}\right)\,.
\end{equation*}
Here $l$ is a fixed positive integer to be chosen later, and $\widehat{\cal R}_{l+1}^{(ji)}$ is a remainder term defined analogously to $\cal R_{l+1}$ in (\ref{eq:cumulant_expansion}). Notice the cancellation between the first and last terms on RHS of \eqref{4.15}. By \eqref{bound for H^2} and \eqref{416} we have
\begin{equation} \label{aaaaaa}
\bE [G^*]^n[G]^{n-1} (\ul{H^2}-1)=O_{\prec}\Big(\frac{1}{N}\Big)\cal M^{2n-1}+O_{\prec}\Big(\frac{1}{N^2\eta}\Big)\cal M^{2n-2}+\sum_{k=2}^l\bE\widehat{L}_{k,1} +\sum_{k=2}^l\bE\widehat{L}_{k,2}
\end{equation}

Step 2. Let us estimate $\sum_{k=2}^l\bE\widehat{L}_{k,1}$. By Lemma \ref{lem:cumulant_expansion} we have
\[
\sum_{k=2}^l\bE\widehat{L}_{k,1}=\sum_{k=2}^l \sum_{s=1}^{l'}N^{-1} \cdot \sum\limits_{i,j}\left(\frac{1}{k!s!}\mathcal{C}_{k+1}(H_{ji})\cal C_{s+1}(H_{ij})
\bb E\frac{\partial^{k+s}(  [G^*]^n[G]^{n-1})}{\partial H_{ji}^{k+s}}\right)\,,
\]
assuming the remainder term is small enough for large $l'$. By Lemma \ref{Tlemh} and $$\frac{\partial^m [G]}{\partial H_{ij}^m}\prec \frac{1}{N\eta}\,,$$
we have
\begin{equation*} 
\sum_{k=2}^l\bE\widehat{L}_{k,1}\prec \sum_{r=2}^{2n}\frac{1}{Nq}\frac{1}{(N\eta)^{r-1}}\cal M^{2n-r} \prec \sum_{r=2}^{2n-r}\bigg(\frac{N^{-1/4}}{(N\eta)^r}+\Big(\frac{N^{-1/4}}{\sqrt{N}q}\Big)^r\bigg) \cal M^{2n-r}\,.
\end{equation*}

Step 3. Now let us estimate at $\sum_{k=2}^l\bE\widehat{L}_{k,2}$. Note that in Section \ref{section4.3}, we have estimated $\bE L_{2,2}$ defined in \eqref{L22}. In particular, we have estimated the term
\[
N^{-1}\sum\limits_{i,j}\left[\mathcal{C}_{3}(H_{ji})\bE \frac{\partial \langle [G^*]^n [G]^{n-1} \rangle}{\partial H_{ji}}G_{ii}G_{jj}\right]\,,
\]
where the method used can be applied almost exactly in estimating $\bE \widehat{L}_{2,2}$. Similarly, we have estimated  
\[
\frac{1}{2N}\sum\limits_{i,j}\left[\mathcal{C}_{4}(H_{ji})\bE \frac{\partial^2 \langle [G^*]^n [G]^{n-1} \rangle}{\partial H^2_{ji}}G_{ii}G_{jj}\right]
\]
in Section \ref{sec4.4}, and method can be applied in estimating $\bE \widehat{L}_{3,2}$. Additionally, we can also estimate $\bE\widehat{L}_{k,2},k\geq 4$ using Lemmas \ref{lem4.3} and \ref{lemH}. One can check that
\begin{equation} \label{4.52}
\sum_{k=2}^l\bE\widehat{L}_{k,2}\prec\sum_{r=1}^{2n}\bigg(\frac{N^{-\xi}}{(N\eta)^{r}}+\bigg(\frac{N^{-\delta/4}}{\sqrt{N}q}\bigg)^{r}\bigg)\cal M^{2n-r}\,.
\end{equation}

Step 4. Combining \eqref{aaaaaa}--\eqref{4.52} we conclude the proof of \eqref{est2}.

\section{Estimates of general polynomials of Green functions} \label{sec5}
In this section we prove Lemmas \ref{lem4.3} and \ref{lem4.5}.
\subsection{Proof of Lemma \ref{lem4.3}} \label{sec5.1}
	To simplify notations, we shall prove the lemma for $\cal Y=\{G\}$, and one easily checks that the proof is the same for $\cal Y=\{G,G^{*}\}$. Let us take a general term  $V \in \cal V^{(\nu_0,\nu_1)}(\{G\})$, and consider
	\begin{equation}  \label{420}
	\sum_{i_1,...,i_{\nu_0}} \bb E  V_{i_1,...,i_{\nu_0}} =\frac{1}{N^{\nu_1}}\sum_{i_1,...,i_{\nu_0}}a_{i_1,...,i_{\nu_0}} \bb E G_{x_1y_1}\cdots G_{x_ky_k}(G^2)_{z_1w_1}\cdots (G^2)_{z_{\nu_2}w_{\nu_2}} [G] ^{\nu_4}\,,
	\end{equation}
	where $x_1,y_1,...,x_k,y_k, z_1, w_1,...,z_{\nu_2},w_{\nu_2} \in \{i_1,...,i_{\nu_0}\}$, and $a_{i_1,...,i_{\nu_0}}$ are complex numbers uniformly bounded in $i_1,...,i_{\nu_0}$. We break the proof into four steps.
	
	Step 1. Since $\nu_2\geq 1$, we can use resolvent identity $z(G^2)_{z_1w_1}=(HG^2)_{z_1w_1}-G_{z_1w_1}$ and Lemma \ref{lem:cumulant_expansion} to get
	\begin{multline} \label{421}
	\bb E  V_{i_1,...,i_{\nu_0}}=\frac{1}{N^{\nu_1}}a_{i_1,...,i_{\nu_0}}\bb E G_{x_1y_1}\cdots G_{x_ky_k}(G^2)_{z_1w_1}\cdots (G^2)_{z_{\nu_2}w_{\nu_2}} [G]^{\nu_4}\\=\frac{1}{-z-\bb E \ul{G}} \bb E\bigg[V_{i_1,...,i_{\nu_0}}/(G^2)_{z_1w_1}\cdot\big(G_{z_1w_1}\ul{G^2}+(G^2)_{z_1w_1}\langle \ul{G}\rangle+2N^{-1}(G^3)_{z_1w_1}\big) \\
	+\frac{1}{N}\sum_{m=1}^{k} V_{i_1,...,i_{\nu_0}}/((G^2)_{z_1w_1}G_{x_my_m}) \cdot \big((G^3)_{x_m w_1}G_{z_1y_m}+G_{x_mz_1}(G^3)_{w_1 y_m}\big)\\
	+\frac{1}{N}\sum_{m=2}^{\nu_2} V_{i_1,...,i_{\nu_0}}/((G^2)_{z_1w_1}(G^2)_{z_mw_m}) \cdot \big((G^4)_{z_m w_1}G_{z_1w_m}+(G^2)_{z_mz_1}(G^3)_{w_1w_m}\\+(G^4)_{w_1w_m}G_{z_1z_m}+(G^2)_{z_1w_m}(G^3)_{z_mw_1}\big)  \\
	+\frac{2\nu_4}{N^2}V_{i_1,...,i_{\nu_0}}/((G^2)_{z_1w_1}[G]) \cdot \big((G^4)_{z_1w_1}+2(HG^2)_{z_1w_1}mm'\big)\\
	+V_{i_1,...,i_{\nu_0}}/(G^2)_{z_1w_1} \cdot G_{z_1w_1}-K^{(1)}-\sum_i\sum_{k=2}^l L^{(1,i)}_{k} - \sum_i\cal R^{(1,z_1i)}_{l+1}\bigg]\,,
	\end{multline}
	where
	\begin{equation*} 
	K^{(1)}=\frac{a_{i_1,...,i_{\nu_0}}}{N^{1+\nu_1}}  \frac{\partial \big( G_{x_1y_1}\cdots G_{x_ky_k}(G^2)_{z_1w_1}\cdots (G^2)_{z_{\nu_2}w_{\nu_2}} [G]^{\nu_4}\big)}{\partial H_{z_1z_1}}(N\cal C_2(H_{z_1z_1})-2)\,,
	\end{equation*}
	and
	\begin{equation*}  
	L^{(1,i)}_{k}= \frac{a_{i_1,...,i_{\nu_0}}}{N^{\nu_1}}\frac{1}{k!}\mathcal{C}_{k+1}(H_{iz_1})\frac{\partial^k( G_{x_1y_1}\cdots G_{x_ky_k}(G^2)_{iw_1}(G^2)_{z_2w_2}\cdots (G^2)_{z_{\nu_2}w_{\nu_2}} [G]^{\nu_4})}{\partial H_{z_1i}^k}.  
	\end{equation*}
	Here $l$ is a fixed positive integer to be chosen later, and $\cal R_{l+1}^{(1,z_1i)}$ is a remainder term defined analogously to $\cal R_{l+1}$ in (\ref{eq:cumulant_expansion}). Again by a routine verification, the remainder term is negligible for large $l$. Note that by Theorem \ref{refthm1} we have
	\begin{equation*} 
	\frac{1}{-z-\bb E \ul{G}}=O(1)
	\end{equation*}
	uniformly for $z \in \b D_{\tau}$. Also note that
	\begin{equation} \label{4.111}
	\langle\ul{G}\rangle =[G]+O_{\prec}\Big(\frac{1}{\sqrt{N}q}\Big)=[G]+O_{\prec}\Big(\eta+\frac{1}{N\eta}\Big)\,.
	\end{equation}
	Inserting \eqref{421} into \eqref{420}, and by using \eqref{416}, \eqref{4.111} and Lemma \ref{lem4.2}, we have
	\begin{equation} \label{new}
	\begin{aligned}
	&\sum_{i_1,...,i_{\nu_0}}\bb E  V_{i_1,...,i_{\nu_0}}\\ =\,&\sum_{i_1,...,i_{\nu_0}} \frac{1}{-z-\bb E \ul{G}} \bb E\bigg[V_{i_1,...,i_{\nu_0}}/(G^2)_{z_1w_1}\cdot G_{z_1w_1}\ul{G^2}-\sum_i\sum_{k=2}^l L^{(1,i)}_k \bigg]+O_{\prec}(\cal B(V))\,.
	\end{aligned}
	\end{equation}
	
	Step 2. Now let us look closely at $L_k^{(1,i)}$. When none of the derivatives $\partial H_{z_1i}^k$ hit $[G]^{\nu_4}$, \eqref{diff} shows that all the resulting terms are still in $\cal V$. When at least one derivative hits $[G]^{\nu_4}$, we expand the factors $[G]$ that were differentiated, and split the terms according to whether $(\ul{H^2}-1)$ is differentiated or not. For example, when $k=2$, let us take the term
	\begin{multline} \label{4.31}
	\frac{a_{i_1,...,i_{\nu_0}}}{N^{\nu_1}}\frac{1}{2!}\mathcal{C}_{2+1}(H_{iz_1})G_{x_1y_1}\cdots G_{x_ky_k}(G^2)_{iw_1}(G^2)_{z_2w_2}\cdots (G^2)_{z_{\nu_2}w_{\nu_2}} [G]^{\nu_4-2} \nu_4(\nu_4-1) \Big(\frac{\partial [G]}{\partial H_{z_1i}}\Big)^2\\
	\eqd  X \Big(\frac{\partial [G]}{\partial H_{z_1i}}\Big)^2
	\end{multline}
	from $L_2^{(1,i)}$. Since 
	\begin{equation*}
	\frac{\partial [G]}{\partial H_{z_1i}}= -2N^{-1}(G^2)_{z_1i}-4N^{-1}mm'H_{z_1i} \,,
	\end{equation*}
	we can split \eqref{4.31} into
	\begin{equation} \label{4.33}
	X (-2N^{-1}(G^2)_{z_1i})^2+  X \Big(8N^{-2}mm'H_{z_1i}(G^2)_{z_1i}+16N^{-2}(mm'H_{z_1i})^2\Big)\,,
	\end{equation}
	and note that the first term in \eqref{4.33} is in $\cal V(\{G\})$, and the second term in \eqref{4.33} contains at least one derivative of $\ul{H^2}-1$. In this way, we split 
	\begin{equation} \label{split}
	L_k^{(1,i)}={L}_{k}^{(1,i,1)}+L_{k}^{(1,i,2)}
	\end{equation}
	where ${L}_{k}^{(1,i,1)}$ are all the terms in ${L}_{k}^{(1,i)}$ that do not contain the derivatives of $(\ul{H^2}-1)$. By the above reasoning, we see that ${L}_{k}^{(1,i,1)}$ is a finite linear combination of elements in $\cal V$. Also observe from \eqref{diffH} that when the derivatives hit $(\ul{H^2}-1)$, it gives us something small.
	
	By Lemma \ref{lem4.2}, \eqref{diffH} and \eqref{new}, one readily checks that
	\begin{equation} \label{4.24}
	\begin{aligned}
	&\sum_{i_1,...,i_{\nu_0}}\bb E  V_{i_1,...,i_{\nu_0}}\\ =\,&\sum_{i_1,...,i_{\nu_0}} \frac{1}{-z-\bb E \ul{G}} \bb E\bigg[V_{i_1,...,i_{\nu_0}}/(G^2)_{z_1w_1}\cdot G_{z_1w_1}\ul{G^2}-\sum_i\sum_{k=2}^l L^{(1,i,1)}_k \bigg]+O_{\prec}(\cal B(V))\,.
	\end{aligned}
	\end{equation}
	
	Step 3. Now let us handle the first term on RHS of \eqref{4.24}. Define $$ V^{(1)}_{i_1,...i_n} \deq V_{i_1,...,i_{\nu_0}}/(G^2)_{z_1w_1}\cdot G_{z_1w_1}=\frac{a_{i_1,...,i_{\nu_0}}}{N^{\nu_1}}G_{x_1y_1}\cdots G_{x_ky_k}(G^2)_{z_2w_2}\cdots (G^2)_{z_{\nu_2}w_{\nu_2}} [G]^{\nu_4} G_{z_1w_1}\,,$$
	and we look at
	\begin{equation} \label{4.25}
	\bb E V^{(1)}_{i_1,...i_n} \ul{G^2}=\frac{a_{i_1,...,i_{\nu_0}}}{N^{\nu_1}}\bb E G_{x_1y_1}\cdots G_{x_ky_k}(G^2)_{z_2w_2}\cdots (G^2)_{z_{\nu_2}w_{\nu_2}} [G]^{\nu_4} G_{z_1w_1}\ul{G^2}\,.
	\end{equation}
	Similar as in \eqref{421}, we use $z\ul{G^2}=\ul{HG^2}-\ul{G}$ and Lemma \ref{lem:cumulant_expansion} to expand \eqref{4.25}. We get
	\begin{multline} \label{425}
	\bb E V^{(1)}_{i_1,...i_n} \ul{G^2}=\frac{1}{T} \bb E\bigg[V^{(1)}_{i_1,...,i_{\nu_0}}\cdot\big(2\ul{G^2}\langle \ul{G}\rangle+2N^{-1}\ul{G^3}\big) 
	+\frac{2}{N^2}\sum_{m=1}^{k} V^{(1)}_{i_1,...,i_{\nu_0}}/G_{x_my_m} \cdot (G^4)_{x_my_m}\\
	+\frac{4}{N^2}\sum_{m=2}^{\nu_2} V^{(1)}_{i_1,...,i_{\nu_0}}/(G^2)_{z_mw_m} \cdot (G^{5})_{z_mw_m}+\frac{2\nu_4}{N^2}  V^{(1)}_{i_1,...,i_{\nu_0}}/[G] \cdot (\ul{G^4}+2\ul{HG^2}mm') \\ +V^{(1)}_{i_1,...,i_{\nu_0}} \cdot \ul{G}+\frac{2}{N^2}V^{(1)}_{i_1,...,i_{\nu_0}}/G_{z_1m_1} (G^4)_{z_my_m}
	-K^{(2)}-\sum_{i,j}\sum_{k=2}^l L^{(2,ji)}_k -\frac{1}{N}\sum_{i,j} \cal R^{(2,ji)}_{l+1}\bigg]\,,
	\end{multline}
	where $\cal R^{(2,ji)}_{l+1}$ is the remainder term,
	\begin{equation*} 
	K^{(2)}=\frac{a_{i_1,...,i_{\nu_0}}}{N^{2+\nu_1}} \sum_{i} \frac{\partial\big( G_{x_1y_1}\cdots G_{x_ky_k}(G^2)_{z_2w_2}\cdots (G^2)_{z_{\nu_2}w_{\nu_2}} [G] ^{\nu_4} G_{z_1w_1}(G^2)_{ii}\big)}{\partial H_{ii}}(N\cal C_2(H_{ii})-2)\,,
	\end{equation*}
	and
	\begin{equation*}  
	L^{(2,ji)}_k= \frac{a_{i_1,...,i_{\nu_0}}}{N^{1+\nu_1}}\frac{1}{k!}\mathcal{C}_{k+1}(H_{iz_1})\frac{\partial^k\big( G_{x_1y_1}\cdots G_{x_ky_k}(G^2)_{z_2w_2}\cdots (G^2)_{z_{\nu_2}w_{\nu_2}} [G]^{\nu_4} G_{z_1w_1}(G^2)_{ji}\big)}{\partial H^k_{ji}}.  
	\end{equation*}
	Recall that $T=-z-2\bb E \ul{G}$ satisfies \eqref{bound for T}. Similar as in \eqref{split}, we can split
	\[
	L^{(2,ji)}_k=L^{(2,ji,1)}_k+L^{(2,ji,2)}_k\,,
	\]
	where $L_{k}^{(2,ji,1)}$ is a finite linear combination of elements in $\cal V$. By inserting \eqref{425} into \eqref{4.24}, applying \eqref{diffH} and Lemma \ref{lem4.2}, we have
	\begin{equation} \label{4.399}
	\sum_{i_1,...,i_{\nu_0}} \bb E  V_{i_1,...,i_{\nu_0}} =\sum_{k=2}^{l}\sum_{i_1,...,i_{\nu_0},i,j} \frac{1}{(z+\bb E \ul{G})T} \bb EL^{(2,ji,1)}_k+\sum_{k=2}^{l}\sum_{i_1,...,i_{\nu_0},i} \frac{1}{z+\bb E \ul{G}} \bb EL^{(1,i,1)}_k+O_{\prec}(\cal B(V))\,.
	\end{equation}
	
	Step 4. Now let us see how to further (recursively) expand \eqref{4.399} and why the expansion ends in finitely many steps. Let $V$ be as in \eqref{420}. For any ${V}_*\in \cal V$ satisfying $\nu_4({V}_*)\leq \nu_4(V)$, let us define the ratio
	\begin{equation*}
	I({V}_*)\deq \frac{N^{b_0({V}_*)}}{({N\eta})^{-\nu_4(V)+\nu_4({V}_*)}N^{b_0({V})-1}}\,.
	\end{equation*}
	From Lemma  \ref{lem4.2} we know that
	\begin{equation} \label{pushu}
	\sum_{i_1,...,i_{\nu_0(V_*)}} \bb E{V}_*\prec N^{b_0(V_*)}\cal M^{\nu_4(V_*)}\leq I({V}_*)\cal B (V)\,.
	\end{equation}
	By construction, $L_k^{(1,i,1)}$ and $L^{(2,ji,1)}_k$ are finite linear combinations of the elements in $\cal V$. Let us collect these elements in the set $\cal V^{(1)}$. Pick arbitrary $V^{(1)}\in \cal V^{(1)}$. We see that $\nu_2(V^{(1)})\geq 1$, $\nu_3(V^{(1)})\geq {\nu}_3(V)$, and $\nu_4(V^{(1)})\leq \nu_4(V)$. By Lemmas \ref{Tlemh} and \ref{lem4.2} one readily check that
	\begin{equation}  \label{recursive}
	I(V^{(1)})\leq I(V) \cdot  (N\eta)^{(\nu_3(V)-\nu_3(V^{(1)}))/2}N^{-\beta}=I(V) \cdot (N\eta)^{(\widetilde{\nu}_3(V^{(1)})-\widetilde{\nu}_3(V))/2}N^{-\beta}\,,
	\end{equation}
	which together with \eqref{pushu} implies
	\[
	\sum_{i_1,...,i_{\nu_{0,1}}}  \bb E V^{(1)}\prec I(V) (N\eta)^{(\widetilde{\nu}_3(V^{(1)})-\widetilde{\nu}_3(V))/2}N^{-\beta}\cdot \cal B (V) \,.
	\]
	Here we abbreviate $\nu_{0,1}\deq\nu_0(V^{(1)})$. Repeat \eqref{4.399} we have
	\begin{equation*}
	\sum_{i_1,...,i_{\nu_{0,1}}} \bb E  V^{(1)}_{i_1,...,i_{\nu_{0,1}}} =\sum_{k=2}^{l}\sum_{i_1,...,i_{\nu_{0,1}},i,j} \frac{1}{(z+\bb E \ul{G})T} \bb E\widetilde{L}^{(2,ji,1)}_k+\sum_{k=2}^{l}\sum_{i_1,...,i_{\nu_{0,1}},i} \frac{1}{z+\bb E \ul{G}} \bb E\widetilde{L}^{(1,i,1)}_k+O_{\prec}(\cal B(V^{(1)}))\,.
	\end{equation*}
	Note that \eqref{recursive} implies
	\begin{multline*}
	\cal B(V^{(1)})\deq N^{b_0(V^{(1)})}  \cal M^{\nu_4(V^{(1)})+1}+\sum_{k=0}^{\nu_4(V^{(1)})}N^{b_0(V^{(1)})}\frac{1}{(N\eta)^k} \Big(\eta+\frac{1}{(N\eta)^{\widetilde{\nu_3}(V^{(1)})/2}}\Big)\cdot \cal M^{\nu_4(V^{(1)})-k}\\
	\leq N^{-\beta}(N\eta)^{(\widetilde{\nu}_3(V^{(1)})-\widetilde{\nu}_3(V))/2-\nu_4(V)+\nu_4(V^{(1)})}\bigg( N^{b_0(V)}  \cal M^{\nu_4(V^{(1)})+1}\\+\sum_{k=0}^{\nu_4(V^{(1)})}N^{b_0(V)}\frac{1}{(N\eta)^k} \Big(\eta+\frac{1}{(N\eta)^{\widetilde{\nu_3}(V^{(1)})/2}}\Big)\cdot \cal M^{\nu_4(V^{(1)})-k}\bigg)
	\leq N^{-\beta} \cal B(V)\,.
	\end{multline*}
	By construction, $\widetilde{L}_k^{(1,i,1)}$ and $\widetilde{L}^{(2,ji,1)}_k$ are finite linear combinations of the elements in $\cal V$. Let us collect these elements in the set $\cal V^{(2)}$. Pick arbitrary $V^{(2)}\in \cal V^{(2)}$. We see that $\nu_2(V^{(2)})\geq 1$, $\nu_3(V^{(2)}) \geq \nu_3(V^{(1)})\geq {\nu}_3(V)$ and $\nu_4(V^{(2)})\leq \nu_4(V^{(1)})\leq \nu_4(V)$. By Lemmas \ref{Tlemh}, \ref{lem4.2} and \eqref{recursive} we have 
	$$
	I(V^{(2)})\leq I(V^{(1)}) \cdot (N\eta)^{(\widetilde{\nu}_3(V^{(2)})-\widetilde{\nu}_3(V^{(1)}))/2} N^{-\beta}\leq  I(V)\cdot (N\eta)^{(\widetilde{\nu}_3(V^{(2)})-\widetilde{\nu}_3(V))/2} N^{-2\beta}\,,
	$$ which together with \eqref{pushu} implies
	\[
	\sum_{i_1,...,i_{\nu_{0,2}}} \bb E V^{(2)}\prec I(V) (N\eta)^{(\widetilde{\nu}_3(V^{(2)})-\widetilde{\nu}_3(V))/2} N^{-2\beta}\cdot \cal B (V)\,.
	\]
	Here we abbreviate $\nu_{0,2}\deq\nu_0(V^{(2)})$. Repeating the above steps we get
	\[
	\sum_{i_1,...,i_{\nu_0}} \bb E  V_{i_1,...,i_{\nu_0}} =\sum_{V^{(n)} \in \cal V^{(n)}}\sum_{i_1,...,i_{\nu_{0}(V^{(n)})}} \bb E V^{(n)}+O_{\prec}(\cal B(V))
	\]
	for any $n \in \bb N$. Here $|\cal V^{(n)}|\leq C_n$, and each $V^{(n)} \in \cal V^{(n)}$ satisfies 
	\[
	\sum_{i_1,...,i_{\nu_{0,n}}} \bb E V^{(n)}\prec I(V) (N\eta)^{(\widetilde{\nu}_3(V^{(n)})-\widetilde{\nu}_3(V))/2} N^{-n\beta}\cdot \cal B (V)\,.
	\]
	 Since $\beta>0$, $\widetilde{\nu_3}\in \{0,1,2\}$ and $I(V)= N$, setting $n$ large enough we get
	\[
	\sum_{i_1,...,i_{\nu_0}}\bb E  V_{i_1,...,i_{\nu_0}}=O_{\prec}(\cal B(V))
	\]
	as desired.

\subsection{Proof of Lemma \ref{lem4.5}} \label{sec5.2}

	Again, to simplify notations, we shall prove the lemma for $\cal Y=\{G\}$, and one easily checks that the proof is the same for $\cal Y=\{G,G^{*}\}$. Let us take a general term  $W \in \cal W^{(\nu_0,\nu_1)}(\{G\})$, and consider	
	\begin{equation}  \label{434}
	\sum_{i_1,...,i_{\nu_0}} \bb E  W_{i_1,...,i_{\nu_0}} =\frac{1}{N^{\nu_1}}\sum_{i_1,...,i_{\nu_0}}a_{i_1,...,i_{\nu_0}} \bb E \langle G_{x_1y_1}\cdots G_{x_ky_k}\rangle [G] ^{\nu_4}\,,
	\end{equation}
	where $x_1,y_1,...,x_k,y_k \in \{i_1,...,i_{\nu_0}\}$, and $a_{i_1,.,,,.i_{\nu_0}}$ are complex numbers uniformly bounded in $i_1,...,i_{\nu_0}$. Resolvent identity $zG_{x_1y_1} =(HG)_{x_1y_1}-I_{x_1y_1}$ and $\bb E \langle X \rangle Y= \bb E X \langle Y \rangle$ gives
	\[
	z\bb E \langle G_{x_1y_1}\cdots G_{x_ky_k}\rangle [G]^{\nu_4}=\bb E  (HG)_{x_1y_1}G_{x_2y_2}\cdots G_{x_ky_k} \langle[G] ^{\nu_4} \rangle-\bb E  \delta_{x_1y_1}G_{x_2y_2}\cdots G_{x_ky_k} \langle[G]^{\nu_4} \rangle\,.
	\]
	Similar as in the proof of Lemma \ref{lem4.3}, we apply Lemma \ref{lem:cumulant_expansion} to the above and get
	\begin{multline} \label{435}
	\bb E  W_{i_1,...,i_{\nu_0}}=\frac{a_{i_1,...,i_{\nu_0}}}{N^{\nu_1}}\bb E \langle G_{x_1y_1}\cdots G_{x_ky_k}\rangle [G] ^{\nu_4}\\=\frac{a_{i_1,...,i_{\nu_0}}}{-z-\bb E \ul{G}} \bb E\bigg[\frac{1}{N^{\nu_1}}\langle \delta_{x_1y_1} G_{x_2y_2}\cdots G_{x_ky_k}\rangle [G]^{\nu_4}+\frac{1}{N^{\nu_1}}\big\langle (\langle \ul{G}\rangle G_{x_1y_1}+N^{-1}(G^2)_{x_1y_1}) G_{x_2y_2}\cdots G_{x_ky_k}\big\rangle [G]^{\nu_4} \\
	+\frac{1}{N^{1+\nu_1}}\sum_{m=2}^{k} \big\langle ((G^2)_{x_1y_m}G_{y_1x_m}+(G^2)_{x_1x_m}G_{y_1y_m}) G_{x_2y_2}\cdots G_{x_{m-1}y_{m-1}} G_{x_{m+1}y_{m+1}}\cdots G_{x_ky_k} \big\rangle [G]^{\nu_4}  \\
	+\frac{2\nu_4}{N^{2+\nu_1}} G_{x_2y_2} \cdots G_{x_ky_k} ((G^3)_{y_1x_1}+2(HG)_{y_1x_1}mm')[G]^{\nu_4-1}
	-K^{(3)}-\sum_i\sum_{k=2}^l L^{(3,i)}_{k} -\sum_{i}\cal R_{l+1}^{(3,ix_1)}\bigg]\,,
	\end{multline}
	where
	\begin{equation*} 
	K^{(3)}=\frac{1}{N^{1+\nu_1}}  \bE\frac{\partial \big( G_{x_1y_1}\cdots G_{x_ky_k} \langle[G]^{\nu_4}\rangle\big)}{\partial H_{x_1x_1}}(N\cal C_2(H_{x_1x_1})-2)\,,
	\end{equation*}
	and
	\begin{equation*}  
	L^{(3,i)}_{k}= \frac{1}{N^{\nu_1}}\frac{1}{k!}\mathcal{C}_{k+1}(H_{ix_1})\bE\frac{\partial^k( \big( G_{iy_1}\cdots G_{x_ky_k} \langle[G] ^{\nu_4}\rangle)}{\partial H_{x_1i}^k}\,.  
	\end{equation*}
	Now we insert \eqref{435} into \eqref{434}, and by using \eqref{416}, \eqref{4.111} and Lemma \ref{lem4.2}, we have
	\begin{equation*} 
	\begin{aligned}
	&\sum_{i_1,...,i_{\nu_0}} \bb E  W_{i_1,...,i_{\nu_0}}\\
	=\,&\sum_{i_1,...,i_{\nu_0}} a_{i_1,...,i_{\nu_0}}\frac{1}{-z-\bb E \ul{G}} \bb E\bigg[\frac{1}{N^{\nu_1}}\langle \delta_{x_1y_1} G_{x_2y_2}\cdots G_{x_ky_k}\rangle [G]^{\nu_4}-\sum_i\sum_{k=2}^l L^{(3,i)}_{k} \bigg]+O_{\prec}\big(\cal B^{(1)}(W)\big)\,.
	\end{aligned}
	\end{equation*}
	Similar as in \eqref{split}, we split 
	\begin{equation*} 
	L_k^{(3,i)}={L}_{k}^{(3,i,1)}+L_{k}^{(3,i,2)}
	\end{equation*}
	where ${L}_{k}^{(3,i,1)}$ are all the terms in ${L}_{k}^{(3,i)}$ that do not contain the derivatives of $(\ul{H^2}-1)$. By \eqref{diffH} and Lemma \ref{lem4.2}, we see that the terms associated with $L_k^{(3,i,2)}$ are negligible, thus
	\begin{equation} \label{4.39}
	\begin{aligned}
	&\sum_{i_1,...,i_{\nu_0}} \bb E  W_{i_1,...,i_{\nu_0}}\\
	=\,&\sum_{i_1,...,i_{\nu_0}} a_{i_1,...,i_{\nu_0}}\frac{1}{-z-\bb E \ul{G}} \bb E\bigg[\frac{1}{N^{\nu_1}}\langle \delta_{x_1y_1} G_{x_2y_2}\cdots G_{x_ky_k}\rangle [G]^{\nu_4}-\sum_i\sum_{k=2}^l L^{(3,i,1)}_{k} \bigg]+O_{\prec}\big(\cal B^{(1)}(W)\big)\,.
	\end{aligned}
	\end{equation}
	Now let us look at the terms in \eqref{4.39} carefully. For any ${W_*}\in \cal W$ satisfying $\nu_4({W_*})=\nu_4(W)$, we define the ratio
	\begin{equation*} 
	I^{(1)}({W_*})\deq \frac{N^{b_1({W_*})}}{N^{b_1(W)-1}}\,,
	\end{equation*}
	where $W$ is defined as in \eqref{434}. By Lemma \ref{lem4.4}, $\sum \bb E W_* \prec I^{(1)}(W_*)\cal B^{(1)}(W)$.
	For $L^{(3,i,1)}_k$, we can apply all the differentials $\partial H_{x_1i}^k$ and write $L^{(3,i,1)}_k$ in the form of linear combinations
	\[
	L^{(3,i,1)}_k=\sum_{s=1}^{n}c_s V_s+\sum_{t=1}^{n'}c'_t W_t\,,
	\] 
	where $V_s \in \cal V$, $W_t \in \cal W$, and $c_s,c'_t$ are constants. Each $V_s$ is formed by requiring at least one differential hit $[G]^{\nu_4}$, and we can use Lemma \ref{lem4.3} to show that
	\[
	\sum_{i_1,...,i_{\nu_0},i} a_{i_1,...,i_{\nu_0}}\frac{1}{-z-\bb E \ul{G}} \bb E (-c_sV_s) =O_{\prec}(\cal B^{(1)}(W))\,.
	\] 
	Each $W_t$ is formed by requiring non of the differentials hit $[G]^{\nu_4}$, thus $\nu_4(W_t)=\nu_4(W)$. By $\cal C_{k+1}(H_{x_1i})\leq C/(Nq)$ we find that\
	$$
	I^{(1)}(W_t)\leq I^{(1)}(W) \cdot (N\eta)^{(\widetilde{\nu}_3(W_t))-\widetilde{\nu}_3(W))/2} N^{-\beta}\,,
	$$ which implies
	\begin{equation} \label{mao}
	\sum_{i_1,...,i_{\nu_0},i} a_{i_1,...,i_{\nu_0}}\frac{1}{-z-\bb E \ul{G}} \bb E (-c'_tW_t) \prec I^{(1)}(W)  (N\eta)^{(\widetilde{\nu}_3(W_t))-\widetilde{\nu}_3(W))/2} N^{-\beta}\cal B^{(1)}(W)\,.
    \end{equation}
     As for the first term on RHS of \eqref{4.39}, one easily sees that it is small enough when $x_1 \not\equiv y_1$. When $x_i\equiv y_i$ for all $i=1,...,k$, we rewrite \eqref{4.39} into
    \begin{equation} \label{w}
    \sum_{i_1,...,i_{\nu_0}} \bb E  W_{i_1,...,i_{\nu_0}}
    =\sum_{i_1,...,i_{\nu_0}} \bb E W'+\sum_{i_1,...,i_{\nu_0}} a_{i_1,...,i_{\nu_0}}\frac{1}{-z-\bb E \ul{G}} \bb E\bigg[-\sum_i\sum_{t=1}^{n'}c'_t W_t \bigg]+O_{\prec}\big(\cal B^{(1)}(W)\big)\,,
    \end{equation} where
	\[
	W'= a_{i_1,...,i_{\nu_0}}\frac{1}{-z-\bb E \ul{G}}\frac{1}{N^{\nu_1}}\langle G_{x_2y_2}\cdots G_{x_ky_k}\rangle [G] ^{\nu_4}\,,
	\]
	and each $W_t$ in \eqref{w} satisfies \eqref{mao}. Note that $I^{(1)}(W)=I^{(1)}(W')$, and $\nu_i(W)=\nu_i(W')$ for $i=0,1,3,4$, thus we can repeat \eqref{w} with $W$ replaced by $W'$. By doing this $k$ times, we have
	\begin{multline*}
	\sum_{i_1,...,i_{\nu_0}} \bb E  W_{i_1,...,i_{\nu_0}}
	=\sum_{i_1,...,i_{\nu_0}} \bb E W^{(k)}+\sum_{i_1,...,i_{\nu_0}} a_{i_1,...,i_{\nu_0}}\frac{1}{-z-\bb E \ul{G}} \bb E\bigg[-\sum_i\sum_{t=1}^{n^{(k)}}c'_t W_t \bigg]+O_{\prec}\big(\cal B^{(1)}(W)\big)\\
	=\sum_{i_1,...,i_{\nu_0}} a_{i_1,...,i_{\nu_0}}\frac{1}{-z-\bb E \ul{G}} \bb E\bigg[-\sum_i\sum_{t=1}^{n^{(k)}}c'_t W_t \bigg]+O_{\prec}\big(\cal B^{(1)}(W)\big)\,,
	\end{multline*} 
	where we used 
	\[
		W^{(k)}= a_{i_1,...,i_{\nu_0}}\frac{1}{-z-\bb E \ul{G}}\frac{1}{N^{\nu_1}}\langle 1\rangle [G]^{\nu_4}=0\,.
	\]
	To sum up, we have
\begin{equation} \label{ww}
	\sum_{i_1,...,i_{\nu_0}} \bb E  W_{i_1,...,i_{\nu_0}}
	=\sum_{i_1,...,i_{\nu_0}} a_{i_1,...,i_{\nu_0}}\frac{1}{-z-\bb E \ul{G}} \bb E\bigg[-\sum_i\sum_{t=1}^{n'}c'_t W_t \bigg]+O_{\prec}\big(\cal B^{(1)}(W)\big)
	\end{equation}
	for some $n' \in \bb N$, where each $W_t$ in \eqref{ww} satisfies \eqref{mao}. In addition, note that $I^{(1)}(W)=N$, $\widetilde{\nu}_3\in \{0,1,2\}$, and $\beta>0$.
	
	The above argument shows that similar as in Section \ref{sec5.1}, we can repeatably use \eqref{ww} finitely many times, and eventually get
	\[
	\sum_{i_1,...,i_{\nu_0}} a_{i_1,...,i_{\nu_0}} \bb E  W_{i_1,...,i_{\nu_0}}\\
	=O_{\prec}\big(\cal B^{(1)}(W)\big)
	\]
	as desired.

\appendix 
\section{Proof of Lemma \ref{prop4.4}} \label{appA}

(i) From Theorem \ref{refthm1} we have
	\begin{equation*}
	|\langle \ul{G} \rangle| \leq |\ul{G}-m|+|\bb E \ul{G}-m| \prec \frac{1}{q}+\frac{1}{N\eta}
	\end{equation*}
	uniformly for $z=E+\ii \eta \in \b S$, and this proves \eqref{Tk} for the case $m+n=1$. In addition, by an $N^{-3}$-net argument and a deterministic monotonivity result \cite[Remark 2.7, Lemma 10.2]{BK16}, we have
	\begin{equation} \label{985}
	\sup_{z \in \b S}	|\langle \ul{G}(z) \rangle| \Big(\frac{1}{q}+\frac{1}{N\eta}\Big)^{-1} \prec 1\,.
	\end{equation}
	For $m+n\geq 2$, note that 
	\begin{equation*}
	G^mG^{*n}=\eta^{-(m+n)} f\Big(\frac{H-E}{\eta}\Big)\,,
	\end{equation*}
	where 
	\[
	f(x)=\Big(\frac{x+\ii}{x^2+1}\Big)^m\Big(\frac{x-\ii}{x^2+1}\Big)^n\,.
	\]
	Writing $f_{\eta}(x)=f\big(\frac{x-E}{\eta}\big)$ and applying Lemma \ref{HS}, we have
	\begin{multline} \label{2.12}
	\langle \ul{G^{m}G^{*n}} \big\rangle = \frac{1}{\pi\eta^{m+n}} \int_{\bb C} \partial_{\bar{z}}\big(\tilde{f}_{\eta}(z)\chi\big(y/\eta\big)\big) \langle \ul{G}(z) \rangle \dd^2 z\\
	=\frac{1}{2\pi\eta^{m+n}} \int_{\bb R^2} \Big(\ii yf_{\eta}''(x)\chi(y/\eta)+\frac{\ii}{\eta} f_{\eta}(x)\chi'(y/\eta)-\frac{y}{\eta}f'_{\eta}(x)\chi'(y/\eta)\Big) \langle \ul{G}(x+\ii y) \rangle \dd x \, \dd y \,,
	\end{multline}
	where we set $\chi(y)=1$ for $|y|\leq 1$ and $\chi(y)=0$ for $|y| \geq 2$. Note that for $m+n \ge 2$, $f$ and its derivatives are in $ L^{1}(\bb R)$. The proof of \eqref{Tk} then finishes by inserting \eqref{985} into \eqref{2.12}.
	
	The proof of \eqref{410} is similar. We omit the details.
	
	(ii) The proof follows by
	\[
	\sum_{i}|(G^mG^{*n})_{ij}|^2=(G^nG^{*m}G^mG^{*n})_{jj}
	\]
	and \eqref{410}.
	
	(iii) Let us first look at the case $k=2$. By the resolvent identity $zG=HG-I$ and Lemma \ref{lem:cumulant_expansion}, we have
	\begin{equation} \label{eqn: 2.59}
	\bE \underline{G^2} = \frac{1}{T}\Big( \bE \underline{G} +2 \bE \langle \underline{G} \rangle \langle \underline{G^2} \rangle + \frac{1}{N} \bE \underline{G^3} -{\bb E K^{(4)}}- \bb E L^{(4)}-\cal R^{(4)}_{l+1}\Big)\,,
	\end{equation} 
	where $T=-z-2\bb E\ul{G}$, $\cal R^{(4)}_{l+1}$ is the remainder term,
	\begin{equation*}
	K^{(4)}=N^{-2} \sum\limits_{i} \frac{\partial (G^2)_{ii}}{\partial H_{ii}}(\bb EH_{ii}^2-2)\,,
	\end{equation*}
	and
	\begin{equation*} \label{eqn: 2.60}
	L^{(4)}= \frac{1}{N} \sum\limits_{i,j} \left( \sum\limits_{k=2}^l \frac{1}{k!} \mathcal{C}_{k+1}(H_{ji})  \frac{\partial^k (G^2)_{ij}}{\partial {H^K_{ji}}}  \right)\,.
	\end{equation*} 
The proof then follows by estimating the RHS of \eqref{eqn: 2.59} by parts (i) and (ii).

The proof of the case $k = 3$ is similar, and we omit the details.

(iv) The proof is an elementary computation. One possible way is to write
\[
\bb E (\ul{H^2}-1)^n=\frac{1}{N}\sum_{i,j}\bb E\Big(H^2_{ij}-\frac{1}{N}\Big)(\ul{H^2}-1)^{n-1}
\]
and apply Lemma \ref{lem:cumulant_expansion} with $h=H_{ij}^2$. We omit the details.

{\small
	
	\bibliography{bibliography} 
	
	\bibliographystyle{amsplain}
}
\end{document}